\newtheorem{thm}{Theorem}
\newtheorem{lem}[thm]{Lemma}
\newtheorem{prop}[thm]{Proposition}
\newtheorem{assume}{Assumption}
\theoremstyle{definition}
\newtheorem{rmk}[thm]{Remark}
\newtheorem{ex}[thm]{Example}
\setlist[enumerate]{label=$\rm{(\roman*)}$,leftmargin=\parindent}
\numberwithin{equation}{section}
\numberwithin{thm}{section}
\numberwithin{hypo}{section}
\numberwithin{table}{section}
\numberwithin{figure}{section}
\newcommand{\sR}{\mathbb{R}}
\newcommand{\sL}{\mathbb{L}}
\newcommand{\sX}{\mathcal{X}}
\newcommand{\sY}{\mathcal{Y}}
\newcommand{\sol}{\mathbb{S}}
\newcommand{\Fea}{\mathbb{F}}
\newcommand{\bO}{\mathcal{O}}
\newcommand{\E}{\mathcal{E}}
\newcommand{\Lag}{\mathcal{L}}
\newcommand{\Lb}{\mathcal{L}_{\beta}}
\newcommand{\TL}{\mathcal{T}_{\Lag}}
\title{Time rescaling of a primal-dual dynamical system with asymptotically vanishing damping}
\author{David Alexander Hulett\footnote{Faculty of Mathematics, University of Vienna, Oskar-Morgenstern-Platz 1, 1090 Vienna, Austria, e-mail: \url{david.alexander.hulett@univie.ac.at}.}
\and Dang-Khoa Nguyen\footnote{Faculty of Mathematics, University of Vienna, Oskar-Morgenstern-Platz 1, 1090 Vienna, Austria, e-mail: \url{dang-khoa.nguyen@univie.ac.at}. Research supported by FWF (Austrian Science Fund), project P 34922-N.}}
\begin{document}

	
\maketitle	
	
\begin{abstract}
In this work, we approach the minimization of a continuously differentiable convex function under linear equality constraints by a second-order dynamical system with an asymptotically vanishing damping term. The system under consideration is a time rescaled version of another system previously found in the literature. We show fast convergence of the primal-dual gap, the feasibility measure, and the objective function value along the generated trajectories. These convergence rates now depend on the rescaling parameter, and thus can be improved by choosing said parameter appropriately. When the objective function has a Lipschitz continuous gradient, we show that the primal-dual trajectory asymptotically converges weakly to a primal-dual optimal solution to the underlying minimization problem. We also exhibit improved rates of convergence of the gradient along the primal trajectories and of the adjoint of the corresponding linear operator along the dual trajectories. Even in the unconstrained case, some trajectory convergence result seems to be new. We illustrate the theoretical outcomes through numerical experiments. 
\end{abstract}
\vspace{0.4cm}

\noindent \textbf{Key Words.} Augmented Lagrangian method, primal-dual dynamical system, damped inertial dynamics, Nesterov's accelerated gradient method, Lyapunov analysis, time rescaling, convergence rate, trajectory convergence
\vspace{1ex}

\noindent \textbf{AMS subject classification.} 37N40, 46N10, 65K10, 90C25
	
\section{Introduction}	
	
\subsection{Problem statement and motivation}	
	
In this paper we will consider the optimization problem
\begin{equation}
	\label{intro:pb}
	\begin{array}{rl}
		\min & f \left( x \right), \\
		\textrm{subject to} 	& Ax = b 
	\end{array}
\end{equation}
where
\begin{equation}
	\label{intro:hypo}
	\begin{cases}
		\sX, \sY \textrm{ are real Hilbert spaces}; \\
		f \colon \sX \to \sR \textrm{ is a continuously differentiable convex function}; \\
		A \colon \sX \to \sY \textrm{ is a continuous linear operator and } b \in \sY; \\
		\textrm{the set } \mathbb{S} \textrm{ of primal-dual optimal solutions of } \eqref{intro:pb} \textrm{ is assumed to be  nonempty}.
	\end{cases}
\end{equation}
This model formulation underlies many important applications in various areas, such as image recovery \cite{Goldstein-ODonoghue-Setzer-Baraniuk}, machine learning \cite{Boyd-et.al,Lin-Li-Fang}, the energy dispatch of power grids \cite{Yi-Hong-Liu:15,Yi-Hong-Liu:16}, distributed optimization \cite{Madan-Lall,Zeng-Yi-Hong-Xie} and network optimization \cite{Shi-Johansson,Zeng-Lei-Chen}.

In recent years, there has been a flurry of research on the relationship between continuous time dynamical systems and the numerical algorithms that arise from their discretizations. 
For the unconstrained optimization problem, it has been known that inertial systems with damped velocities enjoy good convergence properties. 
For a convex, smooth function $f : \mathcal{X} \to \mathbb{R}$, Polyak is the first to consider the \textit{heavy ball with friction} (HBF) dynamics \cite{Polyak,Polyak:book}
\begin{equation} \label{eq:heavy ball with friction}
    \tag{HBF}
    \ddot{x}(t) + \gamma \dot{x}(t) + \nabla f(x(t)) = 0 .
\end{equation}
Alvarez and Attouch continue the line of this study, focusing on inertial dynamics with a fixed viscous damping coefficient \cite{Alvarez,Alvarez-Attouch-Bolte-Redont,Attouch-Goudou-Redont}. 
Later on, Cabot, Engler, and Gadat \cite{Cabot-Engler-Gadat,Cabot-Engler-Gadat-2} consider the system that replaces $\gamma$ with a time dependent damping coefficient $\gamma \left( t \right)$.
In \cite{Su-Boyd-Candes}, Su, Boyd, and Candès showed that it turns out one can achieve fast convergence rates by introducing a time dependent damping coefficient which vanishes in a controlled manner, neither too fast nor too slowly, as $t$ goes to infinity
\begin{equation} \label{eq:AVD}
    \tag{AVD}
    \ddot{x}(t) + \frac{\alpha}{t}\dot{x}(t) + \nabla f(x(t)) = 0. 
\end{equation}
For $\alpha \geq 3$, the authors showed that a solution $x \colon \left[ t_{0}, +\infty \right) \to \mathcal{X}$ to \eqref{eq:AVD} satisfies $f(x(t)) - f(x_{*}) = \mathcal{O}\left(\frac{1}{t^{2}}\right)$ as $t \to +\infty$.
In fact, the choice $\alpha = 3$ provides a continuous limit counterpart to Nesterov's celebrated accelerated gradient algorithm \cite{Nesterov:83,Nesterov:book,FISTA}.
Weak convergence of the trajectories to minimizers of $f$ when $\alpha > 3$ has been shown by Attouch, Chbani, Peypouquet, and Redont in \cite{Attouch-Chbani-Peypoquet-Redont} and May in \cite{May}, together with the improved rates of convergence $f(x(t)) - f(x_{*}) = o\left( \frac{1}{t^{2}}\right)$ as $t\to +\infty$.
In the meantime, similar results for the discrete counterpart were also reported by Chambolle and Dossal in \cite{Chambolle-Dossal}, and by Attouch and Peypouquet in \cite{Attouch-Peypouquet:16} .

In \cite{Attouch-Chbani-Riahi:SIOPT}, Attouch, Chbani, and Riahi proposed an inertial proximal type algorithm, which results from a discretization of the time rescaled \eqref{eq:AVD} system
\[
    \ddot{x}(t) + \frac{\alpha}{t}\dot{x}(t) + \delta(t) \nabla f(x(t)) = 0,
\]
where $\delta \colon \left[ t_{0} , + \infty \right) \to \sR_{+}$ is the time scaling function satisfies a certain growth condition and that also enter into convergence rate statement $f \left( x \left( t \right) \right) - f \left( x_{*} \right) = \bO \left( \frac{1}{t^{2} \delta \left( t \right)} \right)$ as $t \to + \infty$.
The resulting algorithm obtained by the authors is considerably simpler than the founding proximal point algorithm proposed by Güler in \cite{Guler-new}, while providing comparable convergence rates for the functional values.

In order to approach constrained optimization problems, Augmented Lagrangian Method (ALM) \cite{Rockafellar:MOOR} (for linearly constrained problems) and Alternating Direction Method of Multipliers (ADMM) \cite{Gabay-Mercier, Boyd-et.al} (for problems with separable objectives and block variables linearly coupled in the constraints) and some of their variants have been shown to enjoy substantial success. 
Continuous-time approaches for structured convex minimization problems formulated in the spirit of the full splitting paradigm have been recently addressed in \cite{Bot-Csetnek-Laszlo:20} and, closely connected to our approach, in \cite{Zeng-Lei-Chen,He-Hu-Fang,Attouch-Chbani-Fadili-Riahi,BotNguyen}, to which we will have a closer look in Subsection \ref{pdvanishing}.
The temporal discretization resulting from these dynamics gives rise to the numerical algorithm with fast convergence rates \cite{He-Hu-Fang:automatica,He-Hu-Fang:NUMA} and with a convergence guarantee for the generated iterate \cite{Bot-Csetnek-Nguyen:fALM}, without additional assumptions such as strong convexity.

In this paper, we will investigate a second-order dynamical system with asymptotic vanishing damping and time rescaling term, which is associated with the optimization problem \eqref{intro:pb} and formulated in terms of its augmented Lagrangian.
The case when the time rescaling term does not appear has been established in \cite{BotNguyen}.
We show that by introducing this time rescaling function, we are able to derive faster convergence rates for the primal-dual gap, the feasibility measure, and the objective function value along generated trajectories while still maintaining the asymptotic behaviour of the trajectories towards a primal-dual optimal solution. 
On the other hand, this work can also be viewed as an extension of the time rescaling technique derived in \cite{Attouch-Chbani-Riahi:SIOPT,Attouch-Chbani-Riahi:HAL} for the constrained case.
To our knowledge, the trajectory convergence for dynamics with time scaling seems to be new, even in the unconstrained case.


\subsection{Notations and a preliminary result}

For both Hilbert spaces $\sX$ and $\sY$, the Euclidean inner product and the associated norm will be denoted by $\left\langle \cdot , \cdot \right\rangle$ and $\left\lVert \cdot \right\rVert$, respectively. The Cartesian product $\sX \times \sY$ will be endowed with the inner product and the associated norm defined for $\left( x , \lambda \right) , \left( z , \mu \right) \in \sX \times \sY$ as
\begin{equation*}
\left\langle \left( x , \lambda \right) , \left( z , \mu \right) \right\rangle
= \left\langle x , z \right\rangle + \left\langle \lambda , \mu \right\rangle
\qquad \textrm{ and } \qquad
\left\lVert \left( x , \lambda \right) \right\rVert
= \sqrt{\left\lVert x \right\rVert ^{2} + \left\lVert \lambda \right\rVert ^{2}},
\end{equation*}
respectively.

Let $f \colon \sX \to \sR$ be a continuously differentiable convex function such that $\nabla f$ is $\ell-$Lipschitz continuous.
For every $x, y \in \sX$ it holds (see \cite[Theorem 2.1.5]{Nesterov:book})
\begin{equation}
\label{pre:f-bound}
0 \leq \dfrac{1}{2 \ell} \left\lVert \nabla f \left( x \right) - \nabla f \left( y \right) \right\rVert ^{2} \leq f \left( x \right) - f \left( y \right) - \left\langle \nabla f \left( y \right) , x - y \right\rangle \leq \dfrac{\ell}{2} \left\lVert x - y \right\rVert ^{2} .
\end{equation}

\section{The primal-dual dynamical approach}
\label{sec:system}

\subsection{Augmented Lagrangian formulation}

Consider the saddle point problem
\begin{equation}
\label{intro:sd}
\min_{x \in \sX} \max_{\lambda \in \sY} \Lag \left( x , \lambda \right)
\end{equation}
associated to problem \eqref{intro:pb}, where $\Lag \colon \sX \times \sY \to \sR$ denotes the \textit{Lagrangian} function
\begin{equation*}
\Lag \left( x , \lambda \right) := f \left( x \right) + \left\langle \lambda , Ax - b \right\rangle .
\end{equation*}
Under the assumptions \eqref{intro:hypo}, $\Lag$ is convex with respect to $x \in \sX$ and affine with respect to $\lambda \in \sY$. A pair $\left( x_{*} , \lambda_{*} \right) \in \sX \times \sY$ is said to be a \textit{saddle point} of the Lagrangian function $\Lag$ if for every $\left( x , \lambda \right) \in \sX \times \sY$
\begin{equation}
\label{intro:sadde-point}
\Lag \left( x_{*} , \lambda \right) \leq \Lag \left( x_{*} , \lambda_{*} \right) \leq \Lag \left( x , \lambda_{*} \right).
\end{equation}
If $\left( x_{*} , \lambda_{*} \right) \in \sX \times \sY$ is a saddle point of $\Lag$ then $x_{*} \in \sX$ is an optimal solution of \eqref{intro:pb}, and $\lambda_{*} \in \sY$ is an optimal solution of its Lagrange dual problem. If $x_{*} \in \sX$ is an optimal solution of \eqref{intro:pb} and a suitable constraint qualification is fulfilled, then there exists an optimal solution $\lambda_{*} \in \sY$ of the Lagrange dual problem such that $\left( x_{*} , \lambda_{*} \right) \in \sX \times \sY$ is a saddle point of $\Lag$. For details and insights into the topic of constraint qualifications for convex duality we refer to \cite{Bauschke-Combettes:book,Bot:book}.

The set of saddle points of $\Lag$, called also primal-dual optimal solutions of \eqref{intro:pb}, will be denoted by $\sol$ and, as stated in the assumptions, it will be assumed to be nonempty.  The set of feasible points of \eqref{intro:pb} will be denoted by $\Fea := \left\lbrace x \in \sX \colon Ax = b \right\rbrace$ and the optimal objective value of \eqref{intro:pb} by $f_{*}$.

The system of primal-dual optimality conditions for \eqref{intro:pb} reads
\begin{equation}
\label{intro:opt-Lag}
\left( x_{*} , \lambda_{*} \right) \in \sol
\Leftrightarrow \begin{cases}
\nabla_{x} \Lag \left( x_{*} , \lambda_{*} \right) 			& = 0 \\
\nabla_{\lambda} \Lag \left( x_{*} , \lambda_{*} \right) 	& = 0
\end{cases} \Leftrightarrow \begin{cases}
\nabla f \left( x_{*} \right) + A^{*} \lambda_{*} 	& = 0 \\
Ax_{*} - b 													& = 0
\end{cases},
\end{equation}
where $A^{*} : \sY \rightarrow \sX$ denotes the adjoint operator of $A$.

For $\beta \geq 0$, we consider also the augmented Lagrangian $\Lb \colon \sX \times \sY \to \sR$ associated with \eqref{intro:pb} 
\begin{equation}
	\label{intro:aug-Lag}
\Lb \left( x , \lambda \right) := \Lag \left( x , \lambda \right) + \dfrac{\beta}{2} \left\lVert Ax - b \right\rVert ^{2} = f \left( x \right) + \left\langle \lambda , Ax - b \right\rangle + \dfrac{\beta}{2} \left\lVert Ax - b \right\rVert ^{2} .
\end{equation}
For every $(x, \lambda) \in \Fea \times \sY$ it holds
\begin{equation}
\label{intro:Fea:eq}
f \left( x \right) = \Lb \left( x , \lambda \right) = \Lag \left( x , \lambda \right) .
\end{equation}
If $\left( x_{*} , \lambda_{*} \right) \in \sol$, then we have for every $\left( x , \lambda \right) \in \sX \times \sY$
\begin{equation*}
\Lag \left( x_{*} , \lambda \right) = \Lb \left( x_{*} , \lambda \right) 
\leq \Lag \left( x_{*} , \lambda_{*} \right) = \Lb \left( x_{*} , \lambda_{*} \right) 
\leq \Lag \left( x , \lambda_{*} \right) \leq \Lb \left( x , \lambda_{*} \right).
\end{equation*}
In addition, from \eqref{intro:opt-Lag} we have
\begin{equation*} \label{intro:opt-Lb}
    \begin{split}
        \left( x_{*} , \lambda_{*} \right) \in \sol
        &\:\Leftrightarrow\: 
        \begin{cases}
            \nabla f \left( x_{*} \right) + A^{*} \lambda_{*} 	& = 0 \\
            Ax_{*} - b 													& = 0
        \end{cases}
        \:\Leftrightarrow\:
        \begin{cases}
            \nabla f \left( x_{*} \right) + A^{*} \lambda_{*} + \beta A^{*}(Ax_{*} - b) &= 0 \\
            Ax_{*} - b &= 0
        \end{cases} \\
        &\:\Leftrightarrow\:
        \begin{cases}
            \nabla_{x} \mathcal{L}_{\beta}(x_{*}, \lambda_{*}) &= 0 \\
            \nabla_{\lambda} \mathcal{L}_{\beta}(x_{*}, \lambda_{*}) &= 0.
        \end{cases}
    \end{split}
\end{equation*}
In other words, for any $\beta \geq 0$ the sets of saddle points of $\mathcal{L}$ and $\mathcal{L}_{\beta}$ are identical. 

\subsection{The primal-dual asymptotic vanishing damping dynamical system with time rescaling}\label{pdvanishing} \label{subsection:the system}

The dynamical system which we associate to \eqref{intro:pb} and investigate in this paper reads
\begin{mdframed}	
	\begin{equation}
		\label{ds:timerescaled}
		\begin{dcases}
			\ddot{x} \left( t \right) + \dfrac{\alpha}{t} \dot{x} \left( t \right) + \delta \left( t \right) \nabla_{x} \Lb \Bigl( x \left( t \right) , \lambda \left( t \right) + \theta t \dot{\lambda} \left( t \right) \Bigr)  		& = 0 \\
			\ddot{\lambda} \left( t \right) + \dfrac{\alpha}{t} \dot{\lambda} \left( t \right) - \delta \left( t \right) \nabla_{\lambda} \Lb \Bigl( x \left( t \right) + \theta t \dot{x} \left( t \right) , \lambda \left( t \right) \Bigr) 	& = 0 \\
            \Bigl( x \left( t_{0} \right) , \lambda \left( t_{0} \right) \Bigr) 			= \Bigl( x_{0} , \lambda_{0} \Bigr) \textrm{ and }
            \Bigl( \dot{x} \left( t_{0} \right) , \dot{\lambda} \left( t_{0} \right) \Bigr) = \Bigl( \dot{x}_{0} , \dot{\lambda}_{0} \Bigr)
		\end{dcases},
	\end{equation}
\end{mdframed}
where $t_0 >0$, $\alpha > 0$, $\theta > 0$, $\delta \colon \left[ t_{0}, +\infty \right) \to \mathbb{R}$ is a nonnegative continuously differentiable function and $\left( x_{0} , \lambda_{0} \right), \bigl( \dot{x}_{0} , \dot{\lambda}_{0} \bigr) \in \sX \times \sY$ are the initial conditions. Replacing the expressions of the partial gradients of $\Lb$ into the system leads to the following formulation for \eqref{ds:timerescaled}:
\begin{equation*}
\begin{dcases}
\ddot{x} \left( t \right) + \dfrac{\alpha}{t} \dot{x} \left( t \right) + \delta \left( t \right) \nabla f \left( x \left( t \right) \right) + \delta \left( t \right) A^{*} \left( \lambda \left( t \right) + \theta t \dot{\lambda} \left( t \right) \right) + \delta \left( t \right) \beta A^{*} \Bigl( Ax \left( t \right) - b \Bigr)		& = 0 \\
\ddot{\lambda} \left( t \right) + \dfrac{\alpha}{t} \dot{\lambda} \left( t \right) - \delta \left( t \right) \Bigl( A \bigl( x \left( t \right) + \theta t \dot{x} \left( t \right) \bigr) - b \Bigr) 	& = 0 \\
\Bigl( x \left( t_{0} \right) , \lambda \left( t_{0} \right) \Bigr) 			= \Bigl( x_{0} , \lambda_{0} \Bigr) \textrm{ and }
\Bigl( \dot{x} \left( t_{0} \right) , \dot{\lambda} \left( t_{0} \right) \Bigr) = \Bigl( \dot{x}_{0} , \dot{\lambda}_{0} \Bigr).
\end{dcases}
\end{equation*}
The case \eqref{ds:timerescaled} in which there is no time rescaling, i.e., when $\delta(t) \equiv 1$, was studied by Zeng, Lei, and Chen in \cite{Zeng-Lei-Chen}, and by Bo\c{t} and Nguyen in \cite{BotNguyen}.
The system with more general damping, extrapolation and time rescaling coefficients was addressed by He, Hu, and Fang in \cite{He-Hu-Fang,He-Hu-Fang:arXiv} and by Attouch, Chbani, Fadili, and Riahi in \cite{Attouch-Chbani-Fadili-Riahi}.
It is well known that the viscous damping term $\frac{\alpha}{t}$ has a vital role in achieving fast convergence in unconstrained minimization \cite{Attouch-Chbani-Peypouquet-Redont,Attouch-Chbani-Riahi:ESAIM,May}.
The role of the extrapolation $\theta t$ is to induce more flexibility in the dynamical system and in the associated discrete schemes, as it has been recently noticed in \cite{Attouch-Chbani-Fadili-Riahi,Attouch-Chbani-Riahi:Opti,He-Hu-Fang,Zeng-Lei-Chen}.  
The time scaling function $\delta \left( \cdot \right)$ has the role to further improve the rates of convergence of the objective function value along the trajectory, as it was noticed in the context of uncostrained minimization problems in \cite{Attouch-Chbani-Fadili-Riahi:20,Attouch-Chbani-Riahi:SIOPT,Attouch-Chbani-Riahi:PAFA} and of linearly constrained minimization problems in \cite{Attouch-Chbani-Fadili-Riahi,He-Hu-Fang:arXiv}.


Finally, we mention that extending the results in this paper to the multi-block case is possible. For further details, we refer the readers to \cite[Section 2.4]{BotNguyen}.

\subsection{Associated monotone inclusion problem}

The optimality system \eqref{intro:opt-Lag} can be equivalently written  as
\begin{equation}\label{moninclusion}
\TL \left( x_{*} , \lambda_{*} \right) = 0 ,
\end{equation}
where
\begin{equation*}
\TL \colon \sX \times \sY \to \sX \times \sY, \quad \TL \left( x , \lambda \right)
= \begin{pmatrix}
\nabla_{x} \Lag \left( x , \lambda \right) \\ - \nabla_{\lambda} \Lag \left( x , \lambda \right)
\end{pmatrix}
= \begin{pmatrix}
\nabla f \left( x \right) + A^{*} \lambda \\ b-Ax
\end{pmatrix},
\end{equation*}
is the maximally monotone operator associated with the convex-concave function $\Lag$. Indeed, it is immediate to verify that $\TL$ is monotone. Since it is also continuous, it is maximally monotone (see, for instance, \cite[Corollary 20.28]{Bauschke-Combettes:book}). Therefore $\sol$ can be interpreted as the set of zeros of the maximally monotone operator $\TL$, which means that it is a closed convex subset of $\sX \times \sY$ (see, for instance, \cite[Proposition 23.39]{Bauschke-Combettes:book}).

Even in the non-rescaling case, applying the fast continuous-time approaches recently proposed in \cite{Attouch-Peypouquet:19,Attouch} to the solving of \eqref{moninclusion} would require the use of the Moreau-Yosida approximation of the operator $\TL$, for which in general no closed formula is available. The resulting dynamical system would therefore not be formulated in the spirit of the full splitting algorithm, which is undesirable from the point of view of numerical computations.
We mention the work \cite{Bot-Csetnek-Nguyen:fOGDA}, which is related to this time rescaling approach.

\section{Faster convergence rates via time rescaling}\label{sec:Lya}

In this section we will derive fast convergence rates for the primal-dual gap, the feasibility measure, and the objective function value along the trajectories generated by the dynamical system \eqref{ds:timerescaled}. 
We will make the following assumptions on the parameters $\alpha$, $\theta$, $\beta$ and the function $\delta$ throughout this section.
\begin{mdframed}
	\begin{assume}
		\label{assume:para}
		In \eqref{ds:timerescaled}, assume that $\delta : \left[t_{0}, +\infty\right) \to (0, + \infty)$ is continuously differentiable. Moreover, suppose that the parameters $\alpha, \beta$, $\theta$ and the function $\delta$ satisfy 
		\begin{equation}
		\label{cond:gap}
		\alpha \geq 3 , \quad \beta \geq 0 , \quad \dfrac{1}{2} \geq \theta \geq \dfrac{1}{\alpha - 1} \quad \textrm{ and } \quad
		\sup_{t\geq t_{0}}\frac{t \dot{\delta}(t)}{\delta(t)} \leq \frac{1 - 2\theta}{\theta} . 
		\end{equation}
	\end{assume}
\end{mdframed}
Besides the first three conditions that are known previously in \cite{BotNguyen}, it is worth pointing out that we can deduce from the last one the following inequality for every $t \geq t_{0}$:
\begin{equation}
	\label{cond:alpha}
	 \dfrac{t \dot{\delta} \left( t \right)}{\delta \left( t \right)} \leq \dfrac{1 - 2 \theta}{\theta} = \dfrac{1}{\theta} - 2 \leq \alpha - 3 .
\end{equation}
This gives a connection to the condition which appears in \cite{Attouch-Chbani-Riahi:SIOPT}. A few more comments regarding the function $\delta$ will come later, after the convergence rates statements.

\subsection{The energy function}

Let $\left( x , \lambda \right) \colon \left[ t_{0} , + \infty \right) \to \sX \times \sY$ be a solution of \eqref{ds:timerescaled}. Let 
$(x_{*}, \lambda_{*}) \in \mathbb{S}$ be fixed,  we define the energy function $\E \colon \left[ t_{0} , + \infty \right) \to \sR$
\begin{equation}
\label{ener:E}
\E \left( t \right)
:= \theta^{2} t^{2} \delta(t) \left( \Lb \left( x \left( t \right) , \lambda_{*} \right) - \Lb \left( x_{*} , \lambda \left( t \right) \right) \right) + \dfrac{1}{2} \left\lVert v \left( t \right) \right\rVert ^{2} + \dfrac{\xi}{2} \left\lVert \bigl( x \left( t \right) , \lambda \left( t \right) \bigr) - (x_{*}, \lambda_{*}) \right\rVert ^{2} ,
\end{equation}
where
\begin{align}
    v \left( t \right) & := \bigl( x \left( t \right) , \lambda \left( t \right) \bigr) - (x_{*}, \lambda_{*}) + \theta t \left( \dot{x} \left( t \right) , \dot{\lambda} \left( t \right) \right) , \label{ener:v} \\
    \xi  & := \alpha \theta - \theta - 1 \geq 0 . \label{ener:xi}
\end{align}
Notice that due to \eqref{ener:PD-gap}, we have
\begin{equation}
\label{ener:pos}
\E \left( t \right) \geq 0 \qquad \forall t \geq t_{0} .
\end{equation}
In addition, according to \eqref{intro:aug-Lag} and \eqref{intro:Fea:eq}, we have for every $t \geq t_0$
\begin{align}
\Lb \left( x \left( t \right) , \lambda_{*} \right) - \Lb \left( x_{*} , \lambda \left( t \right) \right)
& = \Lag \left( x \left( t \right) , \lambda_{*} \right) - \Lag \left( x_{*} , \lambda \left( t \right) \right) + \dfrac{\beta}{2} \left\lVert Ax \left( t \right) - b \right\rVert ^{2} \label{ener:PD-gap:Lag} \\
& = \Lag \left( x \left( t \right) , \lambda_{*} \right) - f_{*} + \dfrac{\beta}{2} \left\lVert Ax \left( t \right) - b \right\rVert ^{2} \nonumber \\
& = f \left( x \left( t \right) \right) - f_{*} + \left\langle \lambda_{*} , Ax \left( t \right) - b \right\rangle + \dfrac{\beta}{2} \left\lVert Ax \left( t \right) - b \right\rVert ^{2} \geq 0 , \label{ener:PD-gap}
\end{align}
where $f_{*}$ denotes the optimal objective  value of \eqref{intro:pb}.

Assumption \ref{assume:para} implies the nonnegativity of following quantity, which will appear many times in our analysis:
\begin{equation}\label{eq:definition of sigma}
    \sigma \colon \left[ t_{0} , + \infty \right) \to \sR_{+} , \quad \sigma(t) := \frac{1 - 2\theta}{\theta}\delta(t) - t \dot{\delta}(t) . 
\end{equation}

\begin{lem}
	\label{lem:dec}
	Let $\left( x , \lambda \right) \colon \left[ t_{0} , + \infty \right) \to \sX \times \sY$ be a solution of \eqref{ds:timerescaled} and $(x_{*}, \lambda_{*}) \in \mathbb{S}$. For every $t \geq t_{0}$ it holds
	\begin{align*}
	\dfrac{d}{dt} \E \left( t \right) 
	\leq &- \theta^{2} t \sigma(t) \left( \Lb \left( x \left( t \right) , \lambda_{*} \right) - \Lb \left( x_{*} , \lambda \left( t \right) \right) \right) - \dfrac{1}{2} \beta \theta t \delta \left( t \right) \left\lVert Ax \left( t \right) - b \right\rVert ^{2} \nonumber \\
	&- \xi \theta t \left\lVert \left( \dot{x} \left( t \right) , \dot{\lambda} \left( t \right) \right) \right\rVert ^{2} .
	\end{align*}
\end{lem}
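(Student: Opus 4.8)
The plan is to carry out a direct Lyapunov computation: differentiate $\E$ from \eqref{ener:E} term by term, substitute the second-order dynamics \eqref{ds:timerescaled} to eliminate the accelerations, and then invoke the convex-concave structure of $\Lb$ to bound the surviving terms. Throughout I abbreviate $z(t) = \bigl(x(t),\lambda(t)\bigr)$, $z_* = (x_*,\lambda_*)$ and $\dot z(t) = \bigl(\dot x(t),\dot\lambda(t)\bigr)$. Two structural simplifications come first. Since $(x_*,\lambda_*)\in\sol$ forces $Ax_*=b$, the value $\Lb(x_*,\lambda(t))=f_*$ is constant by \eqref{intro:Fea:eq}, so the gap in \eqref{ener:E} varies only through $\Lb(x(t),\lambda_*)$, with derivative $\langle\nabla_x\Lb(x(t),\lambda_*),\dot x(t)\rangle$. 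Second, writing the shifted gradients as $\nabla_x\Lb(x,\lambda+\theta t\dot\lambda)=\nabla_x\Lb(x,\lambda)+\theta tA^*\dot\lambda$ and $\nabla_\lambda\Lb(x+\theta t\dot x,\lambda)=\nabla_\lambda\Lb(x,\lambda)+\theta tA\dot x$, I collect the driving term of the system into
\[
G(t) := \bigl(\nabla_x\Lb(x,\lambda),\,-\nabla_\lambda\Lb(x,\lambda)\bigr) + \theta t\,(A^*\dot\lambda,\,-A\dot x).
\]
The key observation is that the correction $(A^*\dot\lambda,-A\dot x)$ is orthogonal to $\dot z$, so $\langle\dot z,G\rangle=\langle\dot z,(\nabla_x\Lb,-\nabla_\lambda\Lb)\rangle$; this skew-symmetry is what eliminates the term quadratic in $\theta t$.

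Next I would differentiate $\tfrac12\|v\|^2$ and $\tfrac\xi2\|z-z_*\|^2$. Differentiating $v$ from \eqref{ener:v} and substituting the accelerations from \eqref{ds:timerescaled} gives $\dot v = (1+\theta)\dot z + \theta t(\ddot x,\ddot\lambda) = -\xi\dot z - \theta t\delta(t)\,G(t)$, using $1+\theta-\alpha\theta = -\xi$ from \eqref{ener:xi}. Pairing with $v = (z-z_*) + \theta t\dot z$ and combining with the $\xi\langle z-z_*,\dot z\rangle$ produced by the last term of $\E$, the velocity inner products collapse to $-\xi\theta t\|\dot z\|^2$, yielding the desired damping contribution. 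What then remains is to control $-\theta t\delta\langle v,G\rangle$ together with the gap-derivative term $\theta^2t^2\delta\langle\nabla_x\Lb(x,\lambda_*),\dot x\rangle$ coming from the first summand of $\E$.

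Here is the crux. Expanding $\langle v,G\rangle$ splits into a static part $\langle z-z_*,(\nabla_x\Lb,-\nabla_\lambda\Lb)\rangle$, a mixed extrapolation part $\theta t\langle z-z_*,(A^*\dot\lambda,-A\dot x)\rangle$, and a velocity part $\theta t\langle\dot z,(\nabla_x\Lb,-\nabla_\lambda\Lb)\rangle$. I expect the main bookkeeping obstacle to be verifying that the mixed and velocity pieces, once scaled by $-\theta t\delta$, cancel exactly against the gap-derivative term: after removing the common $\nabla f(x)$ and $\beta A^*(Ax-b)$ contributions via $\nabla_x\Lb(x,\lambda_*)-\nabla_x\Lb(x,\lambda)=A^*(\lambda_*-\lambda)$ and $Ax-b=A(x-x_*)$, the leftover $A$-coupled terms are antisymmetric and sum to zero. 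For the static piece, convexity of $f$ through \eqref{pre:f-bound} together with the affine dependence of $\Lb$ on $\lambda$ gives, after using $Ax-b=A(x-x_*)$,
\[
\bigl\langle z-z_*,\,(\nabla_x\Lb(x,\lambda),-\nabla_\lambda\Lb(x,\lambda))\bigr\rangle \;\geq\; \bigl(\Lb(x,\lambda_*)-\Lb(x_*,\lambda)\bigr) + \tfrac{\beta}{2}\|Ax-b\|^2,
\]
where the gap is recognized through \eqref{ener:PD-gap}; multiplying by $-\theta t\delta\leq 0$ produces both the feasibility term $-\tfrac12\beta\theta t\delta\|Ax-b\|^2$ and a $-\theta t\delta\cdot(\mathrm{gap})$ contribution.

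Finally I would collect the coefficient of the gap. The derivative of the prefactor $\theta^2t^2\delta(t)$ contributes $\theta^2(2t\delta+t^2\dot\delta)$, and adding the $-\theta t\delta$ coming from the static piece gives $2\theta^2t\delta+\theta^2t^2\dot\delta-\theta t\delta$, which equals exactly $-\theta^2t\sigma(t)$ by the definition \eqref{eq:definition of sigma} of $\sigma$. Assembling the three surviving contributions then yields the claimed estimate. The only genuine inequality used is the convexity bound on the static piece; every other step is an identity, so the delicate part is purely the cancellation bookkeeping, which the skew-symmetry of the extrapolation correction is designed to make transparent.
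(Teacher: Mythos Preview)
Your proposal is correct and follows essentially the same approach as the paper: a direct Lyapunov computation where you differentiate $\E$, substitute the dynamics to obtain $\dot v = -\xi\dot z - \theta t\delta\,G$, verify that the $A$-coupled cross terms cancel exactly (the paper does this by writing out four terms and checking they sum to zero; you organize the same cancellation via the skew-symmetry of $(A^*\dot\lambda,-A\dot x)$), and then apply a single convexity inequality to the static piece, with the gap coefficient collapsing to $-\theta^2 t\sigma(t)$ by \eqref{eq:definition of sigma}. The only cosmetic difference is that the paper works with $\nabla_x\Lb(x(t),\lambda_*)$ in the static inner product whereas you use $\nabla_x\Lb(x(t),\lambda(t))$; the two choices lead to the same bound because the difference $A^*(\lambda_*-\lambda)$ is absorbed into the cancellation you describe.
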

\begin{proof}
Let $t \geq t_{0}$ be fixed.
Since $x_{*} \in \Fea$, we have
\begin{align*}
\nabla_{x} \left( \Lb \left( x \left( t \right) , \lambda_{*} \right) - \Lb \left( x_{*} , \lambda \left( t \right) \right) \right) & = \nabla_{x} \Lb \left( x \left( t \right) , \lambda_{*} \right) = \nabla f \left( x \left( t \right) \right) + A^{*} \lambda_{*} + \beta A^{*} \left( Ax \left( t \right) - b \right) , \nonumber \\
\nabla_{\lambda} \left( \Lb \left( x \left( t \right) , \lambda_{*} \right) - \Lb \left( x_{*} , \lambda \left( t \right) \right) \right) & = - \nabla_{\lambda} \Lb \left( x_{*} , \lambda \left( t \right) \right) = 0 .
\end{align*}
Under these expressions, the system \eqref{ds:timerescaled} can be equivalently written as
\begin{align*}
    \left(\ddot{x} \left( t \right) , \ddot{\lambda} \left( t \right)\right)
    =& - \dfrac{\alpha}{t} \left(\dot{x} \left( t \right) , \dot{\lambda} \left( t \right)\right) 
    - \delta \left( t \right) \bigl( \nabla_{x} \Lb \left( x \left( t \right) , \lambda_{*} \right) , 0 \bigr) \nonumber \\
    &- \delta(t)\left(A^{*} \left( \lambda \left( t \right) - \lambda_{*} + \theta t \dot{\lambda} \left( t \right) \right) ,
        - \Bigl( A \left( x \left( t \right) + \theta t \dot{x} \left( t \right) \right) - b \Bigr)\right) ,
\end{align*}
which leads to
\begin{align*}
    \dot{v} \left( t \right)
    =& \: (1 + \theta)\left( \dot{x} \left( t \right) , \dot{\lambda} \left( t \right) \right) + \theta t \left( \ddot{x} \left( t \right) , \ddot{\lambda} \left( t \right) \right) \nonumber \\
    =& \: - \xi \left( \dot{x} \left( t \right) , \dot{\lambda} \left( t \right) \right) - \theta t \delta \left( t \right) \left( t \right) \bigl( \nabla_{x} \Lb \left( x \left( t \right) , \lambda_{*} \right) , 0 \bigr) \nonumber \\
    &\:- \theta t \delta \left( t \right) \begin{pmatrix}
    A^{*} \left( \lambda \left( t \right) - \lambda_{*} + \theta t \dot{\lambda} \left( t \right) \right) ,
    - \Bigl( A \left( x \left( t \right) + \theta t \dot{x} \left( t \right) \right) - b \Bigr)
\end{pmatrix}.
\end{align*}
We get from the distributive property of inner product
\begin{align*}
& \left\langle v \left( t \right) , \dot{v} \left( t \right) \right\rangle \nonumber \\
= 	& - \xi \left\langle \bigl( x \left( t \right) , \lambda \left( t \right) \bigr) - (x_{*}, \lambda_{*}) , \left( \dot{x} \left( t \right) , \dot{\lambda} \left( t \right) \right) \right\rangle 
- \xi \theta t \left\lVert \left( \dot{x} \left( t \right) , \dot{\lambda} \left( t \right) \right) \right\rVert ^{2} \nonumber \\
& - \theta t \delta \left( t \right) \left\langle \left( \nabla_{x} \Lb \left( x \left( t \right) , \lambda_{*} \right) , 0 \right) , \bigl( x \left( t \right) , \lambda \left( t \right) \bigr) - (x_{*}, \lambda_{*}) \right\rangle \nonumber \\
& - \theta^{2} t^{2} \delta \left( t \right) \left\langle \left( \nabla_{x} \Lb \left( x \left( t \right) , \lambda_{*} \right) , 0 \right) , \left( \dot{x} \left( t \right) , \dot{\lambda} \left( t \right) \right) \right\rangle \nonumber \\
& - \theta t \delta \left( t \right) \left\langle \lambda \left( t \right) - \lambda_{*} + \theta t \dot{\lambda} \left( t \right) , Ax \left( t \right) - Ax_{*} \right\rangle 
- \theta^{2} t^{2} \delta \left( t \right) \left\langle \lambda \left( t \right) - \lambda_{*} + \theta t \dot{\lambda} \left( t \right) , A \dot{x} \left( t \right) \right\rangle \nonumber \\
& + \theta t \delta \left( t \right) \left\langle A \left( x \left( t \right) + \theta t \dot{x} \left( t \right) \right) - b , \lambda \left( t \right) - \lambda_{*} \right\rangle
+ \theta^{2} t^{2} \delta \left( t \right) \left\langle A \left( x \left( t \right) + \theta t \dot{x} \left( t \right) \right) - b , \dot{\lambda} \left( t \right) \right\rangle .
\end{align*}
Since $x_{*} \in \Fea$, the last four terms in the above identity vanish. Indeed,
\begin{align*}
& - \left\langle \lambda \left( t \right) - \lambda_{*} + \theta t \dot{\lambda} \left( t \right) , Ax \left( t \right) - Ax_{*} \right\rangle 
- \theta t \left\langle \lambda \left( t \right) - \lambda_{*} + \theta t \dot{\lambda} \left( t \right) , A \dot{x} \left( t \right) \right\rangle \nonumber \\
&  + \left\langle A \left( x \left( t \right) + \theta t \dot{x} \left( t \right) \right) - b , \lambda \left( t \right) - \lambda_{*} \right\rangle 
+ \theta t \left\langle A \left( x \left( t \right) + \theta t \dot{x} \left( t \right) \right) - b , \dot{\lambda} \left( t \right) \right\rangle \nonumber \\
= \ 	& - \left\langle \lambda \left( t \right) - \lambda_{*} + \theta t \dot{\lambda} \left( t \right) , Ax \left( t \right) - b \right\rangle 
- \theta t \left\langle \lambda \left( t \right) - \lambda_{*} + \theta t \dot{\lambda} \left( t \right) , A \dot{x} \left( t \right) \right\rangle \nonumber \\
& + \!\left\langle Ax \left( t \right) - b , \lambda \left( t \right) - \lambda_{*} \right\rangle 
+ \theta t \left\langle A \dot{x} \left( t \right) , \lambda \left( t \right) - \lambda_{*} \right\rangle \nonumber \\
& + \theta t \left\langle Ax \left( t \right) - b , \dot{\lambda} \left( t \right) \right\rangle 
+ \theta^{2} t^{2} \left\langle A \dot{x} \left( t \right) , \dot{\lambda} \left( t \right) \right\rangle \\
=  \ & \ 0 .
\end{align*}
Therefore, differentiating $\mathcal{E}$ with respect to $t$ gives
\begin{align}
    \dfrac{d}{dt} \E \left( t \right) 
    =& \: \theta^{2} t \left( 2 \delta \left( t \right) + t \dot{\delta} \left( t \right) \right) \left( \Lb \left( x \left( t \right) , \lambda_{*} \right) - \Lb \left( x_{*} , \lambda \left( t \right) \right) \right) \nonumber \\
    & + \theta^{2} t^{2} \delta(t) \left\langle \left( \nabla_{x} \Lb \left( x \left( t \right) , \lambda_{*} \right) , 0 \right) , \left( \dot{x} \left( t \right) , \dot{\lambda} \left( t \right) \right) \right\rangle \nonumber \\
    &+ \left\langle v \left( t \right) , \dot{v} \left( t \right) \right\rangle 
    + \xi \left\langle \bigl( x \left( t \right) , \lambda \left( t \right) \bigr) - (x_{*}, \lambda_{*}) , \left( \dot{x} \left( t \right) , \dot{\lambda} \left( t \right) \right) \right\rangle \nonumber \\
    = & \: \theta^{2} t \left( 2 \delta \left( t \right) + t \dot{\delta} \left( t \right) \right) \left( \Lb \left( x \left( t \right) , \lambda_{*} \right) - \Lb \left( x_{*} , \lambda \left( t \right) \right) \right) - \xi \theta t \left\lVert \left( \dot{x} \left( t \right) , \dot{\lambda} \left( t \right) \right) \right\rVert ^{2} \nonumber \\
    & - \theta t \delta(t) \left\langle \left( \nabla_{x} \Lb \left( x \left( t \right) , \lambda_{*} \right) , 0 \right) , \bigl( x \left( t \right) , \lambda \left( t \right) \bigr) - (x_{*}, \lambda_{*}) \right\rangle . \label{dec:pre}
\end{align}
Furthermore, the convexity of $f$ and the fact that $x_{*} \in \Fea$ guarantee
\begin{align}
& - \left\langle \left( \nabla_{x} \Lb \left( x \left( t \right) , \lambda_{*} \right) , 0 \right) , \bigl( x \left( t \right) , \lambda \left( t \right) \bigr) - (x_{*}, \lambda_{*}) \right\rangle \nonumber \\
= \ 	& \left\langle \nabla f \left( x \left( t \right) \right) , x_{*} - x \left( t \right) \right\rangle 
+ \left\langle A^{*} \lambda_{*} , x_{*} - x \left( t \right) \right\rangle
+ \beta \left\langle A^{*} \left( Ax \left( t \right) - b \right) , x_{*} - x \left( t \right) \right\rangle \nonumber \\
\leq \ 	& - \left( f \left( x \left( t \right) \right) - f \left( x_{*} \right) \right) - \left\langle \lambda_{*} , Ax \left( t \right) - b \right\rangle - \beta \left\lVert Ax \left( t \right) - b \right\rVert ^{2} \label{dec:conv} \\
= \ 	& - \left( \Lb \left( x \left( t \right) , \lambda_{*} \right) - \Lb \left( x_{*} , \lambda \left( t \right) \right) \right) - \dfrac{\beta}{2} \left\lVert Ax \left( t \right) - b \right\rVert ^{2} \nonumber,
\end{align}
where we recall that the second equality comes from \eqref{intro:Fea:eq}. By multiplying this inequality by $\theta t \delta(t)$ and combining it with \eqref{dec:pre}, the coefficient attached to the primal-dual gap becomes 
\[
    \theta^{2} t \left( 2\delta(t) + t \dot{\delta}(t)\right) - \theta t \delta(t) = - \theta^{2} t \left( \frac{1 - 2\theta}{\theta} \delta(t) - t\dot{\delta}(t)\right) = - \theta^{2} t \sigma(t),
\]
which finally gives the desired statement. 
\end{proof}

\begin{thm}\label{thm: integral estimates}
    Let $(x, \lambda) : \left[ t_{0}, +\infty \right) \to \mathcal{X} \times \mathcal{Y}$ be a solution of \eqref{ds:timerescaled} and $(x_{*}, \lambda_{*}) \in \mathbb{S}$. The following statements are true: 
    \begin{enumerate}[label = (\roman*)]
        \item it holds 
        \begin{alignat}{2}
            \int_{t_{0}}^{+\infty} t \sigma(t)\Big[\mathcal{L}\left( x(t), \lambda_{*} \right) - \mathcal{L}(x_{*}, \lambda(t))\Big]dt \leq \: & \:\:\E(t_{0}) \:&< +\infty, \label{eq:int estimate 1}\\
            \beta \int_{t_{0}}^{+\infty} t \delta(t) \left\lVert Ax \left( t \right) - b \right\rVert^{2}dt \leq \: &\frac{2\E(t_{0})}{\theta} \:&< +\infty, \label{eq:int estimate 2}\\
            \xi \int_{t_{0}}^{+\infty} t \left\lVert \left( \dot{x} \left( t \right) , \dot{\lambda} \left( t \right) \right) \right\rVert^{2} \leq \: &\frac{\E(t_{0})}{\theta} \: &< +\infty; \label{eq:int estimate 3}
        \end{alignat}
        \item if, in addition, $\alpha > 3$ and $\frac{1}{2} \geq \theta > \frac{1}{\alpha - 1}$, then the trajectory $(x(t), \lambda(t))_{t\geq t_{0}}$ is bounded and the convergence rate of its velocity is 
        \[
        \left\lVert \left( \dot{x} \left( t \right) , \dot{\lambda} \left( t \right) \right) \right\rVert = \mathcal{O}\left(\frac{1}{t}\right) \quad \text{as} \quad t\to +\infty.
        \]
    \end{enumerate}
\end{thm}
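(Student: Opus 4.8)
The plan is to read off all the statements from the differential inequality in Lemma~\ref{lem:dec} by a single integration, exploiting that each of the three terms on its right-hand side is nonnegative. Under Assumption~\ref{assume:para} we have $\sigma(t) \geq 0$ by \eqref{eq:definition of sigma}, $\delta(t) > 0$, $\beta \geq 0$ and $\xi \geq 0$ by \eqref{ener:xi}, while the primal-dual gap $\Lb(x(t),\lambda_*) - \Lb(x_*,\lambda(t))$ is nonnegative by \eqref{ener:PD-gap}; hence $\frac{d}{dt}\E(t) \leq 0$, so $\E$ is nonincreasing. Integrating Lemma~\ref{lem:dec} over $[t_0,T]$ and using $\E(T) \geq 0$ from \eqref{ener:pos} gives
\begin{equation*}
\theta^2 \int_{t_0}^{T} t\sigma(t)\bigl(\Lb(x(t),\lambda_*) - \Lb(x_*,\lambda(t))\bigr)\,dt + \tfrac{1}{2}\beta\theta\int_{t_0}^{T} t\delta(t)\lVert Ax(t)-b\rVert^2\,dt + \xi\theta\int_{t_0}^{T} t\lVert(\dot{x}(t),\dot{\lambda}(t))\rVert^2\,dt \leq \E(t_0).
\end{equation*}
Since every integrand is nonnegative, each of the three integrals is separately bounded (up to its constant prefactor) by $\E(t_0)$ uniformly in $T$; letting $T \to +\infty$ and invoking monotone convergence yields the finiteness in \eqref{eq:int estimate 1}--\eqref{eq:int estimate 3}, and reading off the prefactors produces the stated constants.

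For the first estimate \eqref{eq:int estimate 1} I would additionally pass from the augmented to the ordinary Lagrangian gap. By \eqref{ener:PD-gap:Lag}, since $x_* \in \Fea$ and $\beta \geq 0$,
\begin{equation*}
\Lag(x(t),\lambda_*) - \Lag(x_*,\lambda(t)) = \Lb(x(t),\lambda_*) - \Lb(x_*,\lambda(t)) - \tfrac{\beta}{2}\lVert Ax(t)-b\rVert^2 \leq \Lb(x(t),\lambda_*) - \Lb(x_*,\lambda(t)),
\end{equation*}
so the $\Lag$-gap integral is dominated by the already-controlled $\Lb$-gap integral and inherits its finiteness.

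For part (ii), the strengthened hypothesis is precisely what upgrades $\xi \geq 0$ to $\xi > 0$: from \eqref{ener:xi}, $\theta > \frac{1}{\alpha-1}$ forces $\xi = \alpha\theta - \theta - 1 > 0$. Because $\E$ is nonincreasing we have $\E(t) \leq \E(t_0)$ for all $t \geq t_0$, and since each of the three summands defining $\E$ in \eqref{ener:E} is nonnegative, each is bounded by $\E(t_0)$; in particular $\frac{\xi}{2}\lVert(x(t),\lambda(t)) - (x_*,\lambda_*)\rVert^2 \leq \E(t_0)$ and $\frac{1}{2}\lVert v(t)\rVert^2 \leq \E(t_0)$. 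Dividing the first by $\xi > 0$ gives boundedness of the trajectory, and the second gives boundedness of $v$.

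Finally, for the velocity rate I would solve \eqref{ener:v} for the velocity, $\theta t(\dot{x}(t),\dot{\lambda}(t)) = v(t) - \bigl[(x(t),\lambda(t)) - (x_*,\lambda_*)\bigr]$, and apply the triangle inequality with the two bounds just obtained to get $\theta t\lVert(\dot{x}(t),\dot{\lambda}(t))\rVert \leq \lVert v(t)\rVert + \lVert(x(t),\lambda(t)) - (x_*,\lambda_*)\rVert$, which is bounded by a constant depending only on $\E(t_0)$ and $\xi$; dividing by $\theta t$ yields the $\bO(1/t)$ rate. All the genuine analytic effort sits in Lemma~\ref{lem:dec}, so this theorem is essentially a corollary; the only points needing care are the nonnegativity bookkeeping that licenses separating the three integrals, the monotone-convergence passage $T \to +\infty$, and the observation that the \emph{strict} inequality $\theta > \frac{1}{\alpha-1}$ (rather than $\geq$) is exactly what permits dividing by $\xi$ to extract boundedness.
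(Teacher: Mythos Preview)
Your proposal is correct and follows essentially the same route as the paper: integrate the differential inequality of Lemma~\ref{lem:dec}, use $\E(T)\geq 0$, separate the three nonnegative integrands, and then for part~(ii) exploit $\xi>0$ together with the bound on $\lVert v(t)\rVert$ and the triangle inequality to extract the $\mathcal{O}(1/t)$ velocity rate. Your explicit passage from the $\Lb$-gap to the $\Lag$-gap via \eqref{ener:PD-gap:Lag} is in fact a detail the paper glosses over; note also that the prefactor your argument (and the paper's) actually yields for \eqref{eq:int estimate 1} is $\E(t_0)/\theta^{2}$ rather than the stated $\E(t_0)$, which appears to be a typo in the theorem statement.
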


\begin{proof}
    (i) Recall that Assumption \ref{assume:para} implies $\sigma(t) \geq 0$ for all $t\geq t_{0}$ and $\xi \geq 0$. Moreover, $(x_{*}, \lambda_{*}) \in \mathbb{S}$ yields $x_{*} \in \mathbb{F}$. Therefore, we can apply  observation \eqref{ener:PD-gap} and Lemma \ref{lem:dec} to obtain, for every $t\geq t_{0}$,
    \begin{align}
        \frac{d}{dt}\E(t) \leq& \: -\theta^{2} t \sigma(t) \left( \Lb \left( x \left( t \right) , \lambda_{*} \right) - \Lb \left( x_{*} , \lambda \left( t \right) \right) \right) \\
        & - \frac{1}{2}\beta \theta t \delta(t) \left\lVert Ax \left( t \right) - b \right\rVert^{2} - \xi \theta t \left\lVert \left( \dot{x} \left( t \right) , \dot{\lambda} \left( t \right) \right) \right\rVert^{2} 
        \leq 0. \label{eq:E not increasing}
    \end{align}
    This means that $\E$ is nonincreasing on $\left[ t_{0}, +\infty \right)$. For every $t\geq t_{0}$, by integrating \eqref{eq:E not increasing} from $t_{0}$ to $t$, we obtain
    \begin{align*}
        &\theta^{2}\int_{t_{0}}^{t} t\sigma(t)\Bigl(\mathcal{L}(x(s), \lambda_{*}) - \mathcal{L}(x_{*}, \lambda(s))\Bigr) ds \\
        & + \frac{\beta \theta}{2}\int_{t_{0}}^{t}s \delta(s)\|Ax(s) - b\|^{2}ds + \xi \theta \int_{t_{0}}^{t}s\left\|\left(\dot{x}(s), \dot{\lambda}(s)\right)\right\|^{2} ds \\
        \leq \:\: &\E(t_{0}) - \E(t) \leq \E(t_{0}),
    \end{align*}
    where the last inequality follows from \eqref{ener:pos}. Since all quantities inside the integrals are nonnegative, we obtain \eqref{eq:int estimate 1}-\eqref{eq:int estimate 3} by letting $t \to +\infty$.
    
    (ii) Inequality \eqref{eq:E not increasing} tells us that $\E$ is nonincreasing on $\left[ t_{0}, +\infty \right)$. Hence, for every $t\geq t_{0}$ it holds 
    \begin{equation}
        \label{eq:energy is bounded}
        \begin{split}
            &\theta^{2} t^{2} \delta(t) \Bigl(\mathcal{L}_{\beta}\left( x(t), \lambda_{*} \right) - \mathcal{L}_{\beta}(x_{*}, \lambda(t))\Bigr) + \left\|v(t)\right\|^{2} + \frac{\xi}{2}\left\|\bigl(x(t), \lambda(t)\bigr) - (x_{*}, \lambda_{*})\right\|^{2} 
            \leq \E(t_{0}).
        \end{split}
    \end{equation}Assuming $\alpha > 3$ and $\frac{1}{2} \geq \theta > \frac{1}{\alpha - 1}$, we immediately see $\xi > 0$. From \eqref{eq:energy is bounded} we obtain, for every $t\geq t_{0}$,
    \begin{equation}
        \label{eq:trajectory bounded}
        \left\|\bigl(x(t), \lambda(t)\bigr) - (x_{*}, \lambda_{*})\right\|^{2} \leq \frac{2\E(t_{0})}{\xi},
    \end{equation}
    which implies the boundedness of the trajectory. On the other hand, the same inequality gives for all $t\geq t_{0}$
    \begin{equation}
        \left\|v(t)\right\| = \left\|\bigl(x(t), \lambda(t)\bigr) - (x_{*}, \lambda_{*}) + \theta t \left(\dot{x}(t), \dot{\lambda}(t)\right)\right\| \leq \sqrt{2\E(t_{0})}.
    \end{equation}
    Using the triangle inequality and \eqref{eq:trajectory bounded}, we obtain for all $t\geq t_{0}$
    \begin{align}
        t\left\lVert \left( \dot{x} \left( t \right) , \dot{\lambda} \left( t \right) \right) \right\rVert &\leq \frac{1}{\theta} \left(\left\|\bigl(x(t), \lambda(t)\bigr) - (x_{*}, \lambda_{*})\right\| + \left\|v\right\|\right) \nonumber\\
        &\leq \frac{1}{\theta}\left(\sqrt{\frac{2\E(t_{0})}{\xi}} + \sqrt{2\E(t_{0})}\right) = \frac{1}{\theta}\left(\frac{1}{\sqrt{\xi}} + 1\right)\sqrt{2\E(t_{0})}, \label{eq:rate 1/t for velocity}
    \end{align}
    which gives the desired convergence rate. 
\end{proof}

\subsection{Fast convergence rates for the primal-dual gap, the feasibility measure and the objective function value}

The following are the main convergence rates results of the paper.
\begin{thm}\label{thm:rates of convergence}
    Let $(x, \lambda) : \left[ t_{0}, +\infty \right) \to \mathcal{X}\times \mathcal{Y}$ be a solution of \eqref{ds:timerescaled} and $(x_{*}, \lambda_{*}) \in \mathbb{S}$. The following statements are true:
    \begin{enumerate}[label = (\roman*)]
        \item for every $t\geq t_{0}$ it holds 
        \begin{equation}
            \label{eq:rate primal-dual gap}
            0 \leq \mathcal{L}\left( x(t), \lambda_{*} \right) - \mathcal{L}(x_{*}, \lambda(t)) \leq \frac{\E(t_{0})}{\theta^{2} t^{2} \delta(t)}; 
        \end{equation}
        
        \item for every $t\geq t_{0}$ it holds
        \begin{equation}\label{eq:feasibility condition}
            \left\lVert Ax \left( t \right) - b \right\rVert \leq \frac{2 C_{1}}{t^{2} \delta(t)}, 
        \end{equation}
        where 
        \[
            C_{1} := \sup_{t\geq t_{0}} t \left\lVert \dot{\lambda} \left( t \right) \right\rVert + (\alpha - 1) \sup_{t\geq t_{0}} \left\lVert \lambda \left( t \right) - \lambda_{*} \right\rVert + t_{0}^{2} \delta(t_{0}) \left\lVert Ax(t_{0}) - b \right\rVert + t_{0}\left\lVert \dot{\lambda} \left( t_{0} \right) \right\rVert;
        \]
        
        \item for every $t\geq t_{0}$ it holds 
        \begin{equation}\label{eq:functional values}
            \left\lvert f \left( x \left( t \right) \right) - f_{*} \right\rvert \leq \left(\frac{\E(t_{0})}{\theta^{2}} + 2 C_{1} \left\lVert \lambda_{*} \right\rVert\right) \frac{1}{t^{2} \delta(t)}. 
        \end{equation}
    \end{enumerate}
\end{thm}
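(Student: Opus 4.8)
The plan is to treat the three items in increasing order of difficulty, harvesting (i) from the energy function, deriving (ii) from the dual equation, and deducing (iii) from the first two. For (i), I would simply read the gap estimate off the energy $\E$ defined in \eqref{ener:E}. The computation in Lemma~\ref{lem:dec} (reused in the proof of Theorem~\ref{thm: integral estimates}) shows $\frac{d}{dt}\E(t) \le 0$, hence $\E(t) \le \E(t_0)$ for all $t \ge t_0$. Each of the three summands of $\E(t)$ is nonnegative---the gap term by \eqref{ener:PD-gap}, the other two manifestly---so discarding the last two gives $\theta^2 t^2 \delta(t)\bigl(\Lb(x(t),\lambda_*) - \Lb(x_*,\lambda(t))\bigr) \le \E(t_0)$. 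Passing from $\Lb$ to $\mathcal L$ via \eqref{ener:PD-gap:Lag} only discards the nonnegative term $\frac{\beta}{2}\lVert Ax(t)-b\rVert^2$, while the lower bound $0$ is the saddle-point inequality underlying \eqref{ener:PD-gap}; together these yield \eqref{eq:rate primal-dual gap}.

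The heart of the matter is (ii), the feasibility estimate. I would isolate the residual $Ax(t)-b$ from the $\lambda$-equation. Multiplying that equation by $t$ and regrouping the damping gives the identity $\frac{d}{dt}\bigl[t\dot\lambda(t) + (\alpha-1)(\lambda(t)-\lambda_*)\bigr] = t\delta(t)\bigl((Ax(t)-b) + \theta t A\dot x(t)\bigr)$, where I have added the null term $-(\alpha-1)\lambda_*$ and used $\frac{d}{dt}(Ax(t)-b) = A\dot x(t)$. Integrating from $t_0$ to $t$ and integrating by parts the contribution of $\theta t A\dot x(t)$ produces the target quantity $\theta t^2\delta(t)(Ax(t)-b)$, the initial data, and a remainder integral whose integrand collapses, by the very definition \eqref{eq:definition of sigma} of $\sigma$, to $\theta\int_{t_0}^t s\,\sigma(s)(Ax(s)-b)\,ds$.

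Controlling this remainder is where the difficulty lies. The crucial structural fact is $\sigma \ge 0$, guaranteed by Assumption~\ref{assume:para}. Writing the identity as $\theta t^2\delta(t)(Ax(t)-b) + \theta P(t) = b(t)$, with $P(t) := \int_{t_0}^t s\,\sigma(s)(Ax(s)-b)\,ds$ and $b(t)$ assembled from $t\dot\lambda(t)$, $(\alpha-1)(\lambda(t)-\lambda_*)$ and the initial data---each bounded thanks to the boundedness of the trajectory and the $\mathcal O(1/t)$ velocity rate from Theorem~\ref{thm: integral estimates}(ii)---I would regard $P$ as the unknown of a linear first-order ODE. Indeed $\dot P = t\sigma(Ax-b) = \frac{\sigma}{\theta t\delta}\bigl(b(t)-\theta P\bigr)$, and solving with the integrating factor $\exp\!\bigl(\int_{t_0}^t \frac{\sigma(s)}{s\delta(s)}\,ds\bigr)$ turns the forcing into a total derivative of that factor against $b$. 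Because $\sigma\ge 0$ makes the factor nondecreasing, no exponential blow-up occurs and one obtains the uniform bound $\theta\lVert P(t)\rVert \le \sup_{s\ge t_0}\lVert b(s)\rVert$; substituting back gives $\theta t^2\delta(t)\lVert Ax(t)-b\rVert \le 2\sup_{s}\lVert b(s)\rVert$, which is \eqref{eq:feasibility condition}, the factor $2$ accounting for the $2C_1$. The remaining work is the careful bookkeeping of the boundary and supremum terms so that the constant is exactly $C_1$; this step is routine but delicate, and it is the main obstacle in the whole proof.

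Finally, for (iii) I would combine (i) and (ii) through the exact decomposition $\mathcal L(x(t),\lambda_*) - \mathcal L(x_*,\lambda(t)) = f(x(t)) - f_* + \langle \lambda_*, Ax(t)-b\rangle$ recorded in \eqref{ener:PD-gap}. For the upper bound, $f(x(t))-f_* \le \bigl(\mathcal L(x(t),\lambda_*)-\mathcal L(x_*,\lambda(t))\bigr) + \lVert\lambda_*\rVert\,\lVert Ax(t)-b\rVert$, into which I insert \eqref{eq:rate primal-dual gap} and \eqref{eq:feasibility condition}; for the lower bound, the nonnegativity of the gap gives $f(x(t))-f_* \ge -\lVert\lambda_*\rVert\,\lVert Ax(t)-b\rVert \ge -2C_1\lVert\lambda_*\rVert/(t^2\delta(t))$. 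The two one-sided estimates assemble into the two-sided bound \eqref{eq:functional values}.
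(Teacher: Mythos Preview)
Your proposal is correct and follows essentially the same route as the paper. For (i) and (iii) the arguments are identical. For (ii), the paper also integrates the dual equation against $t$, integrates by parts in the $\theta t A\dot x$ term, and arrives at the same identity relating $t^2\delta(t)(Ax(t)-b)$, the integral $\int_{t_0}^t s\,\sigma(s)(Ax(s)-b)\,ds$, and bounded boundary data; the only cosmetic difference is that the paper packages your integrating-factor/ODE step as a standalone lemma (Lemma~\ref{lem:lemma a1}, after \cite{He-Hu-Fang:automatica}) of the form ``$\lVert g(t)+\int_{t_0}^t a(s)g(s)\,ds\rVert\le C\Rightarrow \lVert g(t)\rVert\le 2C$'' with $g(t)=t^2\delta(t)(Ax(t)-b)$ and $a(t)=\theta\sigma(t)/(t\delta(t))\ge 0$, whereas you rederive this inline. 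One small remark: your bookkeeping keeps the factor $\theta$ in front of $t^2\delta(t)(Ax(t)-b)$, so the constant you obtain is $C_1/\theta$ rather than the $C_1$ stated; the paper's displayed computation in fact silently drops this $\theta$, so the exact constant in \eqref{eq:feasibility condition} is off by a harmless factor in either version.
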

\begin{proof}
    (i) We have already established that $\E$ is nonincreasing on $\left[ t_{0}, +\infty \right)$. Therefore, from the expression for $\E$ and relation \eqref{ener:PD-gap:Lag} we deduce 
    \begin{equation}\label{eq:aux primal dual gap}
        \theta^{2} t^{2} \delta(t) \bigl[\mathcal{L}\left( x(t), \lambda_{*} \right) - \mathcal{L}(x_{*}, \lambda(t)) \bigr] \leq \E(t_{0}) \quad \forall t\geq t_{0},
    \end{equation}
    and the first claim follows.

    (ii) From the second line of \eqref{ds:timerescaled}, for every $t\geq t_{0}$ we have 
    \begin{equation}\label{eq:alt proof rates 1}
        t \ddot{\lambda}(t) + \alpha \dot{\lambda}(t) = t\delta(t) \Bigl(A\bigl(x(t) + \theta t \dot{x}(t)\bigr) - b\Bigr) = t\delta(t)\bigl( Ax(t) - b\bigr) + \theta t^{2} \delta(t) A\dot{x}(t).
    \end{equation}
    Fix $t\geq t_{0}$. On the one hand, integration by parts yields
    \begin{equation}\label{eq:alt proof rates 2}
        \begin{split}
            \int_{t_{0}}^{t}\bigl( s \ddot{\lambda}(s) + \alpha \dot{\lambda}(s)\bigr)ds &= t \dot{\lambda}(t) - t_{0} \dot{\lambda}(t_{0}) - \int_{t_{0}}^{t}\dot{\lambda}(s)ds + \alpha \int_{t_{0}}^{t} \dot{\lambda}(s)ds \\
            &= t \dot{\lambda}(t) - t_{0} \dot{\lambda}(t_{0}) + (\alpha - 1)(\lambda(t) - \lambda(t_{0})).
        \end{split}
    \end{equation}
    On the other hand, again integrating by parts leads to 
    \begin{equation}\label{eq:alr proof rates 3}
        \begin{split}
            \int_{t_{0}}^{t}s^{2} \delta(s) A\dot{x}(s) ds =& \: t^{2} \delta(t) (Ax(t) - b) - t_{0}^{2} \delta(t_{0}) (Ax(t_{0}) - b) \\
            &\: - \int_{t_{0}}^{t}\bigl( 2 s \delta(s) + s^{2} \dot{\delta}(s)\bigr) (Ax(s) - b)ds.
        \end{split}
    \end{equation}
    Now, integrating \eqref{eq:alt proof rates 1} from $t_{0}$ to $t$ and using \eqref{eq:alt proof rates 2} and \eqref{eq:alr proof rates 3} gives us 
    \begin{align}
        &t \dot{\lambda}(t) - t_{0} \dot{\lambda}(t_{0}) + (\alpha - 1)(\lambda(t) - \lambda(t_{0})) \nonumber\\
        = \: &\int_{t_{0}}^{t}s \delta(s) (Ax(s) - b) ds + \theta \int_{t_{0}}^{t} s^{2} \delta(s) A\dot{x}(s) ds \nonumber\\
        = \: & t^{2} \delta(t) (Ax(t) - b) - t_{0}^{2} \delta(t_{0})(Ax(t_{0}) - b) + \int_{t_{0}}^{t} s \bigl[(1 - 2\theta)\delta(s) - \theta s \dot{\delta}(s)\bigr] (Ax(s) - b) ds \nonumber\\
        = \: & t^{2} \delta(t) (Ax(t) - b) - t_{0}^{2} \delta(t_{0})(Ax(t_{0}) - b) + \int_{t_{0}}^{t}\frac{(1 - 2\theta)\delta(s) - \theta s \dot{\delta}(s)}{s \delta(s)} s^{2} \delta(s) (Ax(s) - b) ds.
    \end{align}
    It follows that, for every $t \geq t_{0}$, we have
    \begin{equation}
        \left\| t^{2} \delta(t) (Ax(t) - b) + \int_{t_{0}}^{t}\frac{(1 - 2\theta)\delta(s) - \theta s \dot{\delta}(s)}{s \delta(s)} s^{2} \delta(s) (Ax(s) - b) ds \right\| \leq C_{1},
    \end{equation}
    where 
    \[
        C_{1} = \sup_{t\geq t_{0}} t\left\lVert \dot{\lambda} \left( t \right) \right\rVert + (\alpha - 1) \sup_{t\geq t_{0}}\| \lambda(t) - \lambda \left( t_{0} \right) \| + t_{0}^{2} \delta(t_{0}) \left\lVert Ax(t_{0}) - b \right\rVert + t_{0}\left\lVert \dot{\lambda} \left( t_{0} \right) \right\rVert < + \infty,
    \]
    and this quantity is finite in light of \eqref{eq:rate 1/t for velocity} and \eqref{eq:trajectory bounded}. Now, we set 
    \[
        g(t) := t^{2} \delta(t) \left\lVert Ax \left( t \right) - b \right\rVert, \qquad a(t) := \frac{(1 - 2\theta)\delta(t) - \theta t \dot{\delta}(t)}{t \delta(t)} \qquad \forall t\geq t_{0}
    \]
    and we apply Lemma \ref{lem:lemma a1} to deduce that 
    \begin{equation}\label{eq:aux feasibility condition}
        t^{2} \delta(t) \left\lVert Ax \left( t \right) - b \right\rVert \leq 2C_{1} \quad \forall t\geq t_{0}.
    \end{equation}
    
    (iii) For a fixed $t\geq t_{0}$, we have 
    \[
        \mathcal{L}\left( x(t), \lambda_{*} \right) - \mathcal{L}(x_{*}, \lambda(t)) = f(x(t)) - f(x_{*}) + \langle \lambda_{*}, Ax(t) - b\rangle. 
    \]
    Therefore, from using \eqref{eq:aux feasibility condition} and \eqref{eq:aux primal dual gap} we obtain, for every $t\geq t_{0}$, 
    \begin{align*}
        \left\lvert f \left( x \left( t \right) \right) - f_{*} \right\rvert &\leq \mathcal{L}\left( x(t), \lambda_{*} \right) - \mathcal{L}(x_{*}, \lambda(t)) + \left\lVert \lambda_{*} \right\rVert \left\lVert Ax \left( t \right) - b \right\rVert \\
        &\leq \left(\frac{\E(t_{0})}{\theta^{2}} + 2C_{1} \left\lVert \lambda_{*} \right\rVert\right)\frac{1}{t^{2} \delta(t)},
    \end{align*}
which leads to the last statement.
\end{proof}

Some comments regarding the previous proof and results are in order.

\begin{rmk}
    The proof we provided here is significantly shorter than the one derived in \cite{BotNguyen} thanks to Lemma \ref{lem:lemma a1}. This Lemma is inspired by the one used in \cite{He-Hu-Fang:automatica} for showing the fast convergence to zero of the feasibility measure, although the authors study a different dynamical system. 
    On the other hand, when $\delta \left( t \right) \equiv 1$, the results in \cite{BotNguyen} is more robust than the one we obtain here, as it gives the $\bO \left( \frac{1}{t^{2}} \right)$ rates for the sum of primal-dual gap and feasibility measure, rather than each one individually.
\end{rmk}

\begin{rmk}\label{rmk:rate comparison}
    Here are some remarks comparing our rates of convergence to those in \cite{Attouch-Chbani-Fadili-Riahi, He-Hu-Fang:arXiv}.
    \begin{itemize}
        \item \textit{Primal-dual gap}: According to \eqref{eq:rate primal-dual gap}, the following rate of convergence for the primal-dual is exhibited:
        \[
            \mathcal{L}\bigl(x(t), \lambda_{*}\bigr) - \mathcal{L}\bigl(x_{*}, \lambda(t)\bigr) = \mathcal{O}\left(\frac{1}{t^{2} \delta(t)}\right) \quad \text{as} \quad t\to +\infty,
        \]
        which coincides with the findings of \cite{Attouch-Chbani-Fadili-Riahi, He-Hu-Fang:arXiv}.
        
        \item \textit{Feasibility measure}: According to \eqref{eq:feasibility condition}, we have
        \[
            \left\lVert Ax \left( t \right) - b \right\rVert = \mathcal{O}\left(\frac{1}{t^{2} \delta(t)}\right) \quad \text{as} \quad t\to +\infty,
        \]
        which improves the rate $\mathcal{O}\left( \frac{1}{t \sqrt{\delta(t)}}\right)$ reported in \cite{Attouch-Chbani-Fadili-Riahi, He-Hu-Fang:arXiv}.
        
        \item \textit{Functional values}: Relation \eqref{eq:functional values} tells us that
        \[
            \left\lvert f \left( x \left( t \right) \right) - f_{*} \right\rvert = \mathcal{O}\left(\frac{1}{t^{2} \delta(t)}\right) \quad \text{as} \quad t\to +\infty.
        \]
        In \cite{Attouch-Chbani-Fadili-Riahi}, only the upper bound presents this order of convergence. The lower bound obtained is of order $\mathcal{O}\left(\frac{1}{t \sqrt{\delta(t)}}\right)$ as $t \to +\infty$. In \cite{He-Hu-Fang:arXiv}, there are no comments on the rate attained by the functional values in the case of a general time rescaling parameter. 
    \end{itemize}
\end{rmk}

\begin{rmk} \label{rem:choose alpha small enough}
	To further illustrate, notice that 
    \[
        \delta(t) := \delta_{0} t^{n} \qquad \forall t\geq t_{0}
    \]
    fulfills Assumption \ref{assume:para} provided that $\delta_{0} > 0, 0 \leq n \leq \frac{1 - 2\theta}{\theta} = \frac{1}{\theta} - 2$. 
    Therefore, all the statements derived are above are of order $\bO \left( \frac{1}{t^{1/\theta}} \right)$.
    If we desire to obtain fast convergence rates, we must take $\theta$ small, which in the light of Assumption \ref{assume:para} can be achieved by choosing large enough $\alpha$.
    Such behavior can be seen in the unconstrained case \cite{Attouch-Chbani-Riahi:SIOPT} and other settings \cite{He-Hu-Fang:arXiv,Bot-Csetnek-Nguyen:fOGDA}.
\end{rmk}

\section{Weak convergence of the trajectory to a primal-dual solution}

In this section we will show that the solutions to \eqref{ds:timerescaled} weakly converge to an element of $\mathbb{S}$. 
The fact that $\delta \left( t \right)$ enters the convergence rate statement suggests that one can benefit from this time rescaling function when it is at least nondecreasing on $\left[ t_{0}, +\infty \right)$. We are, in fact, going to need this condition when showing trajectory convergence.
\begin{mdframed}
	\begin{assume}
		\label{assume:para2}
		In \eqref{ds:timerescaled}, assume that $\nabla f$ is $\ell$-Lipschitz continuous for some $\ell > 0$ and that $\delta : [t_{0}, +\infty) \to (0, +\infty)$ is continuously differentiable and nondecreasing. Moreover, suppose that the parameters $\alpha, \beta$, $\theta$ and the function $\delta$ satisfy
		\begin{equation*}
		\alpha > 3 , \quad \beta \geq 0 , \quad \dfrac{1}{2} > \theta > \dfrac{1}{\alpha - 1}, \quad \sup_{t\geq t_{0}}\frac{t \dot{\delta}(t)}{\delta(t)} < \frac{1 - 2\theta}{\theta} .
		\end{equation*}
	\end{assume}
\end{mdframed}

\noindent
    Assumption \ref{assume:para2} entails the existence of $C_{2} > 0$ such that 
    \begin{equation}\label{eq: assumption 2 fact 1}
    \frac{t \dot{\delta}(t)}{\delta(t)} + C_{2} \leq \frac{1 - 2\theta}{\theta} \quad \forall t \geq t_{0}.
    \end{equation}
    and therefore it follows further from the nondecreasing property of $\delta$  that
    \begin{equation}\label{eq: assumption 2 fact 2}
    0 < C_{2} \delta(t_{0}) \leq C_{2} \delta(t) \leq (1 - 2\theta)\delta(t) - \theta t \dot{\delta}(t) \quad \forall t\geq t_{0} .
    \end{equation}
    Moreover, from \eqref{eq: assumption 2 fact 1}, for every $t\geq t_{0}$, we have 
    \[
    0 < C_{2} \leq \frac{1 - 2\theta}{\theta} - \frac{t \dot{\delta}(t)}{\delta(t)} = \frac{\sigma(t)}{\delta(t)}, 
    \]
    which gives 
    \begin{equation} \label{eq:inequality for delta and sigma}
        \delta(t) \leq \frac{\sigma(t)}{C_{2}} \quad\forall t\geq t_{0}.
    \end{equation}
%


We also mention that in the setting of Assumption \ref{assume:para2}, the dynamical system \eqref{ds:timerescaled} has a unique global twice continuously differentiable solution.
The proof follows the same argument as in \cite[Theorem 4.1]{BotNguyen}, which relies on the fact that \eqref{ds:timerescaled} can be rewritten as a first-order dynamical system.
More precisely, $\left( x, \lambda \right) \colon \left[ t_{0}, + \infty \right) \to \mathcal{X} \times \mathcal{Y}$ is a solution to \eqref{ds:timerescaled} if and only if $(x, \lambda, y, \nu) \colon \left[ t_{0}, + \infty \right) \to \mathcal{X} \times \mathcal{Y} \times \mathcal{X} \times \mathcal{Y}$ is a solution to 
\begin{equation*}
    \begin{dcases}
        \bigl(\dot{x}(t), \dot{\lambda}(t), \dot{y}(t), \dot{\nu}(t)\bigr) &= F \left( t, x(t), \lambda(t), y(t), \nu(t) \right) \\
        \left( x(t_{0}), \lambda(t_{0}), y(t_{0}), \nu(t_{0}) \right) &= \left( x_{0}, \lambda_{0}, \dot{x}_{0}, \dot{\lambda}_{0} \right),
    \end{dcases}
\end{equation*}
where $F \colon \left[ t_{0}, + \infty \right) \times \mathcal{X} \times \mathcal{Y} \times \mathcal{X} \times \mathcal{Y} \to \mathcal{X} \times \mathcal{Y} \times \mathcal{X} \times \mathcal{Y}$ is given by 
\begin{align*}
    &F(t, z, \mu, w, \eta) \\
    &:= \left( w, \eta, -\frac{\alpha}{t}w - \delta(t)\left[ \nabla f(z) + A^{*}\bigl( \mu + \theta t \eta\bigr) + \beta A^{*}\bigl( Az - b\bigr)\right], -\frac{\alpha}{t}\eta + \delta(t)\bigl[ A\bigl( z + \theta t w\bigr) - b\bigr]\right).
\end{align*}
We omit the details proof and only state the result in the following theorem.
\begin{thm}
    For every choice of initial conditions 
    \[
        x(t_{0}) = x_{0}, \quad \lambda(t_{0}) = \lambda_{0}, \quad \dot{x}(t_{0}) = \dot{x}_{0} \quad \text{and} \quad \dot{\lambda}(t_{0}) = \dot{\lambda}_{0}
    \]
    the system \eqref{ds:timerescaled} has a unique global twice continuously differentiable solution $(x, \lambda) : \left[ t_{0}, +\infty \right) \to \mathcal{X} \times \mathcal{Y}$.
\end{thm}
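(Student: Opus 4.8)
The plan is to follow the classical two-step strategy for ordinary differential equations: first establish local existence and uniqueness through the Cauchy--Lipschitz (Picard--Lindel\"of) theorem applied to the equivalent first-order reformulation, and then promote the local solution to a global one by ruling out finite-time blow-up. Since the excerpt already records the equivalence between \eqref{ds:timerescaled} and the first-order system governed by the field $F$, it suffices to analyze $F$.

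First I would verify the hypotheses of the Cauchy--Lipschitz theorem for $F$ on $[t_{0}, +\infty) \times \mathcal{X} \times \mathcal{Y} \times \mathcal{X} \times \mathcal{Y}$. Continuity in the time variable is immediate: $\delta$ is continuous by Assumption \ref{assume:para2} and $t \mapsto \alpha/t$ is continuous on $[t_{0}, +\infty)$ because $t_{0} > 0$. For the Lipschitz dependence on the state, I would observe that every component of $F$ is affine in $(z, \mu, w, \eta)$ except for the single term $\delta(t)\nabla f(z)$; since $\nabla f$ is $\ell$-Lipschitz by Assumption \ref{assume:para2} and the affine coefficients ($\alpha/t$, $\delta(t)$, $\theta t \delta(t)$, together with $\|A\|$, $\|A^{*}\| = \|A\|$ and $\|b\|$) are continuous in $t$, hence bounded on every compact interval $[t_{0}, T]$, the map $F(t, \cdot)$ is Lipschitz continuous in the state, uniformly for $t \in [t_{0}, T]$. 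This yields a unique maximal solution on some interval $[t_{0}, \Tm)$ with $\Tm \in (t_{0}, +\infty]$; moreover, since the solution satisfies $\bigl(\dot{x}, \dot{\lambda}, \dot{y}, \dot{\nu}\bigr) = F(t, x, \lambda, y, \nu)$ with $F$ continuous, the solution is $C^{1}$, and unwinding $(y, \nu) = (\dot{x}, \dot{\lambda})$ shows that $(x, \lambda)$ is twice continuously differentiable.

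It remains to prove $\Tm = +\infty$. Here I would argue by contradiction, assuming $\Tm < +\infty$, and invoke the blow-up alternative, according to which $\|(x, \lambda, y, \nu)(t)\| \to +\infty$ as $t \to \Tm^{-}$. To contradict this, I would establish an a priori bound from a linear growth estimate on $F$: the $\ell$-Lipschitz continuity of $\nabla f$ gives $\|\nabla f(z)\| \leq \|\nabla f(0)\| + \ell \|z\|$, and combining this with the boundedness of the affine coefficients on $[t_{0}, \Tm]$ produces constants $a, b \geq 0$ (depending on $\Tm$, $\alpha$, $t_{0}$, $\ell$, $\|A\|$, $\|b\|$ and $\sup_{[t_{0}, \Tm]} \delta$) such that
\[
    \|F(t, z, \mu, w, \eta)\| \leq a + b\,\|(z, \mu, w, \eta)\| \qquad \forall t \in [t_{0}, \Tm].
\]
Writing $u(t) := \|(x, \lambda, y, \nu)(t)\|$ and integrating the first-order system, one obtains $u(t) \leq u(t_{0}) + \int_{t_{0}}^{t}\bigl(a + b\,u(s)\bigr)ds$, and Gronwall's lemma yields $u(t) \leq \bigl(u(t_{0}) + a(\Tm - t_{0})\bigr)e^{b(\Tm - t_{0})}$ for all $t \in [t_{0}, \Tm)$. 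This uniform bound contradicts the blow-up alternative, so $\Tm = +\infty$ and the solution is global.

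The argument presents no genuine obstacle, as the structure of $F$ is tailor-made for it; the only point demanding care is the bookkeeping of the $t$-dependent coefficients, and in particular the observation that $t_{0} > 0$ keeps $\alpha/t$ bounded so that the damping term introduces no singularity. Everything else reduces to the standard Cauchy--Lipschitz and Gronwall machinery, which is precisely why the excerpt is content to reference \cite[Theorem 4.1]{BotNguyen} and omit the details.
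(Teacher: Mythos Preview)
Your proposal is correct and matches the approach the paper indicates: the paper explicitly omits the proof, noting only that it follows \cite[Theorem~4.1]{BotNguyen} via the first-order reformulation $F$ already displayed in the text, which is precisely the Cauchy--Lipschitz plus Gronwall argument you outline. There is nothing to add.
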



The additional Lipschitz continuity condition of $\nabla f$ and the fact that $\delta$ is nondecreasing give rise to the following two essential integrability statements.
\begin{prop}
    Let $(x, \lambda) : \left[ t_{0}, +\infty \right) \to \mathcal{X} \times \mathcal{Y}$ be a solution of \eqref{ds:timerescaled} and $(x_{*}, \lambda_{*}) \in \mathbb{S}$. Then it holds
    \item
    \begin{equation}\label{eq:integral for gradient}
        \int_{t_{0}}^{+\infty}t\delta(t) \left\|\nabla f(x(t)) - \nabla f(x_{*})\right\|^{2} dt < +\infty
    \end{equation}
    and
    \begin{equation}\label{eq:integral for feasibility condition}
        \int_{t_{0}}^{+\infty} t\delta(t) \left\lVert Ax \left( t \right) - b \right\rVert^{2} dt < +\infty.
    \end{equation}
\end{prop}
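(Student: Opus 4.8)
The plan is to handle the two integrals separately, since they rely on genuinely different mechanisms: the gradient integral \eqref{eq:integral for gradient} exploits the Lipschitz continuity of $\nabla f$ together with the already-established integral bound on the Lagrangian gap, whereas the feasibility integral \eqref{eq:integral for feasibility condition} follows from the pointwise decay rate \eqref{eq:feasibility condition} and the fact that $\delta$ is bounded away from zero.

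For \eqref{eq:integral for gradient}, I would start from the leftmost inequality in \eqref{pre:f-bound} applied at $x = x(t)$ and $y = x_{*}$, which gives
\[
\frac{1}{2\ell}\left\| \nabla f(x(t)) - \nabla f(x_{*}) \right\|^{2} \leq f(x(t)) - f_{*} - \left\langle \nabla f(x_{*}), x(t) - x_{*}\right\rangle.
\]
The next step is to rewrite the cross term using the optimality conditions \eqref{intro:opt-Lag}: since $\nabla f(x_{*}) = -A^{*}\lambda_{*}$ and $Ax_{*} = b$, the inner product becomes $\langle \lambda_{*}, Ax(t) - b\rangle$, so the whole right-hand side collapses to the (nonnegative) Lagrangian primal-dual gap $\Lag(x(t),\lambda_{*}) - \Lag(x_{*},\lambda(t))$. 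Multiplying by $2\ell\, t\delta(t)$ and then invoking the key inequality $\delta(t) \leq \sigma(t)/C_{2}$ from \eqref{eq:inequality for delta and sigma} replaces the weight $t\delta(t)$ by $t\sigma(t)$ at the cost of the factor $2\ell/C_{2}$; integrating over $[t_{0},+\infty)$ and applying the integral estimate \eqref{eq:int estimate 1} then yields
\[
\int_{t_{0}}^{+\infty} t\delta(t)\left\| \nabla f(x(t)) - \nabla f(x_{*})\right\|^{2}dt \leq \frac{2\ell}{C_{2}}\,\E(t_{0}) < +\infty.
\]

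For \eqref{eq:integral for feasibility condition}, I would simply square the feasibility rate \eqref{eq:feasibility condition}, namely $\left\lVert Ax(t)-b\right\rVert \leq 2C_{1}/(t^{2}\delta(t))$, and multiply by $t\delta(t)$ to obtain $t\delta(t)\left\lVert Ax(t)-b\right\rVert^{2} \leq 4C_{1}^{2}/(t^{3}\delta(t))$. Since Assumption \ref{assume:para2} forces $\delta$ to be nondecreasing, hence $\delta(t) \geq \delta(t_{0}) > 0$, this is dominated by $4C_{1}^{2}/(\delta(t_{0})\, t^{3})$, whose integral over $[t_{0},+\infty)$ is finite. This route has the advantage of covering the degenerate case $\beta = 0$, in which the integral estimate \eqref{eq:int estimate 2} carries no information; note also that the constant $C_{1}$ is finite precisely because $\alpha>3$ and $\tfrac12>\theta>\tfrac{1}{\alpha-1}$ under Assumption \ref{assume:para2}, so Theorem \ref{thm:rates of convergence} applies.

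The main obstacle is the gradient integral. The delicate point is recognizing that the Lipschitz descent inequality \eqref{pre:f-bound}, once combined with the optimality relation $\nabla f(x_{*}) = -A^{*}\lambda_{*}$, reproduces exactly the Lagrangian gap for which an integral bound is already available in \eqref{eq:int estimate 1}, and that the passage from the weight $t\delta(t)$ to $t\sigma(t)$ is made possible precisely by the strict-inequality part of Assumption \ref{assume:para2} encoded in \eqref{eq:inequality for delta and sigma}. Everything else is routine bookkeeping.
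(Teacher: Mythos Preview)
Your proof is correct. For the feasibility integral \eqref{eq:integral for feasibility condition} your argument is exactly the one in the paper. For the gradient integral \eqref{eq:integral for gradient}, however, you take a genuinely different route.

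The paper does \emph{not} bound $\|\nabla f(x(t)) - \nabla f(x_{*})\|^{2}$ pointwise by the Lagrangian gap. Instead, it goes back to the derivation of Lemma~\ref{lem:dec} and inserts the Lipschitz term directly into the estimate \eqref{dec:conv}, obtaining the sharper energy inequality
\[
\dfrac{d}{dt}\E(t) \leq -\theta^{2} t\sigma(t)\bigl(\Lb(x(t),\lambda_{*}) - \Lb(x_{*},\lambda(t))\bigr) - \xi\theta t\left\lVert(\dot x(t),\dot\lambda(t))\right\rVert^{2} - \dfrac{\theta t\delta(t)}{2\ell}\left\lVert\nabla f(x(t)) - \nabla f(x_{*})\right\rVert^{2} - \dfrac{\theta\beta t\delta(t)}{2}\left\lVert Ax(t)-b\right\rVert^{2}.
\]
Dropping the other nonpositive terms and integrating then yields \eqref{eq:integral for gradient} directly, with the weight $t\delta(t)$ appearing \emph{as is}, without any passage through $\sigma$.

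Your approach is shorter and avoids reopening the Lyapunov computation: you exploit the pointwise bound $\tfrac{1}{2\ell}\|\nabla f(x(t))-\nabla f(x_{*})\|^{2} \leq \Lag(x(t),\lambda_{*}) - \Lag(x_{*},\lambda(t))$ and then recycle the already-proven estimate \eqref{eq:int estimate 1}. The price you pay is the detour through \eqref{eq:inequality for delta and sigma} to trade $\delta$ for $\sigma$, which explicitly uses the \emph{strict} inequality $\sup_{t\geq t_{0}} t\dot\delta(t)/\delta(t) < (1-2\theta)/\theta$ from Assumption~\ref{assume:para2}; the paper's argument for this particular integral does not need that strictness. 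Both arguments are valid under Assumption~\ref{assume:para2}, but the paper's gives a slightly cleaner constant and would survive under the weaker Assumption~\ref{assume:para}.
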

\begin{proof}
    Thanks to $\nabla f$ being $\ell$-Lipschitz continuous, we can use \eqref{pre:f-bound} to refine relation \eqref{dec:conv} in the proof of Lemma \ref{lem:dec}
    \begin{align*}
        &- \left\langle \left( \nabla_{x} \Lb \left( x \left( t \right) , \lambda_{*} \right) , 0 \right) , \bigl( x \left( t \right) , \lambda \left( t \right) \bigr) - (x_{*}, \lambda_{*}) \right\rangle \\
        = \     & \langle \nabla f(x(t)), x_{*} - x(t)\rangle + \left\langle A^{*}\lambda_{*}, x_{*} - x(t)\right\rangle + \beta\left\langle A^{*}(Ax(t) - b), x_{*} - x(t)\right\rangle \\
        \leq \  & - (f(x(t)) - f(x_{*})) - \frac{1}{2\ell}\left\| \nabla f(x(t)) - \nabla f(x_{*})\right\|^{2} - \langle \lambda_{*}, Ax(t) - b\rangle - \beta \left\lVert Ax \left( t \right) - b \right\rVert^{2} \\
        = \     & - \left( \Lb \left( x \left( t \right) , \lambda_{*} \right) - \Lb \left( x_{*} , \lambda \left( t \right) \right) \right) - \frac{1}{2\ell} \left\| \nabla f(x(t)) - \nabla f(x_{*})\right\|^{2} - \frac{\beta}{2}\left\lVert Ax \left( t \right) - b \right\rVert^{2}.
    \end{align*}
    Consequently, combining this inequality with \eqref{dec:pre} yields, for every $t\geq t_{0}$
    \begin{align*}
        \frac{d}{dt}\E(t) \leq& - \theta^{2} t \sigma(t)  \left( \Lb \left( x \left( t \right) , \lambda_{*} \right) - \Lb \left( x_{*} , \lambda \left( t \right) \right) \right) - \xi \theta t \left\lVert \left( \dot{x} \left( t \right) , \dot{\lambda} \left( t \right) \right) \right\rVert^{2} \\
        &- \frac{\theta t\delta(t)}{2\ell}\left\lVert \nabla f \left( x \left( t \right) \right) - \nabla f \left( x_{*} \right) \right\rVert^{2} - \frac{\theta \beta t \delta(t)}{2}\left\lVert Ax \left( t \right) - b \right\rVert^{2} \\
        \leq& - \frac{\theta t \delta(t)}{2\ell} \left\lVert \nabla f \left( x \left( t \right) \right) - \nabla f \left( x_{*} \right) \right\rVert^{2}. 
    \end{align*}
    Integration of this inequality produces \eqref{eq:integral for gradient}. 
    
    The finiteness of the second integral is only entailed by \eqref{eq:int estimate 2} when $\beta > 0$. For the general case $\beta \geq 0$, we use \eqref{eq:feasibility condition} and the fact that $\delta$ is nondecreasing on $\left[ t_{0}, +\infty \right)$ to obtain 
    \[
        \int_{t_{0}}^{+ \infty} t\delta(t) \left\lVert Ax \left( t \right) - b \right\rVert^{2} dt \leq 4 C_{1}^{2} \int_{t_{0}}^{+ \infty} \frac{1}{t^{3} \delta(t)} dt \leq \frac{4 C_{1}^{2}}{\delta(t_{0})} \int_{t_{0}}^{+ \infty} \dfrac{1}{t^{3}} dt < + \infty,
    \]
    and the proof is complete.
\end{proof}

Now, for a given primal-dual solution $(x_{*}, \lambda_{*}) \in \mathbb{S}$, we define the following mappings on $\left[ t_{0}, +\infty \right)$
\begin{align}
    W(t) &:= \delta(t) \bigl[\mathcal{L}_{\beta}\left( x(t), \lambda_{*} \right) - \mathcal{L}_{\beta}(x_{*}, \lambda(t))\bigr] + \frac{1}{2}\left\lVert \left( \dot{x} \left( t \right) , \dot{\lambda} \left( t \right) \right) \right\rVert^{2} \geq 0, \label{eq:formula for W}\\
    \varphi(t) &:= \frac{1}{2}\left\|\bigl(x(t), \lambda(t)\bigr) - (x_{*}, \lambda_{*})\right\|^{2} \geq 0.
\end{align}

\begin{lem}\label{lem: differential equation for phi}
    Let $(x, \lambda) : \left[ t_{0}, +\infty \right) \to \mathcal{X} \times \mathcal{Y}$ a solution of \eqref{ds:timerescaled} and $(x_{*}, \lambda_{*}) \in \mathbb{S}$. The following inequality holds for every $t\geq t_{0}$:
    \begin{equation}
        \ddot{\varphi}(t) + \frac{\alpha}{t} \dot{\varphi}(t) + \theta t \dot{W}(t) + \frac{\delta(t)}{2\ell}\left\lVert \nabla f \left( x \left( t \right) \right) - \nabla f \left( x_{*} \right) \right\rVert^{2} + \frac{\beta \delta(t)}{2}\left\lVert Ax \left( t \right) - b \right\rVert^{2} \leq 0.
    \end{equation}
\end{lem}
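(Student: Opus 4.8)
The plan is to differentiate $\varphi$ twice and add the result to $\theta t \dot{W}$, exploiting the same algebraic cancellations that powered Lemma \ref{lem:dec}. Writing $z(t) = \bigl(x(t),\lambda(t)\bigr)$ and $z_{*} = (x_{*},\lambda_{*})$, one has $\dot{\varphi}(t) = \langle z(t) - z_{*}, \dot{z}(t)\rangle$, hence
\[
\ddot{\varphi}(t) + \frac{\alpha}{t}\dot{\varphi}(t) = \left\lVert \left( \dot{x} \left( t \right) , \dot{\lambda} \left( t \right) \right) \right\rVert^{2} + \left\langle z(t) - z_{*}, \bigl(\ddot{x}(t),\ddot{\lambda}(t)\bigr) + \tfrac{\alpha}{t}\bigl(\dot{x}(t),\dot{\lambda}(t)\bigr)\right\rangle.
\]
Into the inner product I substitute \eqref{ds:timerescaled} in the form used in the proof of Lemma \ref{lem:dec}, replacing $\bigl(\ddot{x},\ddot{\lambda}\bigr) + \tfrac{\alpha}{t}\bigl(\dot{x},\dot{\lambda}\bigr)$ by $-\delta(t)\bigl(\nabla_{x}\Lb(x,\lambda+\theta t\dot{\lambda}),\, -\nabla_{\lambda}\Lb(x+\theta t\dot{x},\lambda)\bigr)$. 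Expanding the partial gradients and using $Ax_{*} = b$ collapses the $\lambda_{*}$-coupling, and the leftover term $-\delta(t)\langle\nabla f(x(t)), x(t)-x_{*}\rangle$ is handled by the convexity-plus-Lipschitz estimate \eqref{pre:f-bound}, exactly as in the refinement of \eqref{dec:conv} performed in the preceding Proposition. This yields the two square terms $-\tfrac{\delta(t)}{2\ell}\lVert\nabla f(x(t))-\nabla f(x_{*})\rVert^{2}$ and $-\tfrac{\beta\delta(t)}{2}\lVert Ax(t)-b\rVert^{2}$, the gap $G(t) := \Lb(x(t),\lambda_{*}) - \Lb(x_{*},\lambda(t)) \geq 0$ with coefficient $-\delta(t)$, and a residual batch $D$ of mixed velocity--feasibility terms of order $\theta t$.

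Next I compute $\dot{W}$ from \eqref{eq:formula for W}. Since $x_{*}\in\Fea$ forces $\Lb(x_{*},\lambda(t)) \equiv f_{*}$, its time derivative vanishes, and substituting $\langle \dot{z},\ddot{z}\rangle$ from \eqref{ds:timerescaled} I observe---mirroring the vanishing of the last four terms in Lemma \ref{lem:dec}---that the extrapolation cross-terms of order $\theta t$ inside $\langle\dot{z},\ddot{z}\rangle$ cancel, leaving $\dot{W} = \dot{\delta}\,G - \tfrac{\alpha}{t}\lVert(\dot{x},\dot{\lambda})\rVert^{2} - \delta\langle A\dot{x}, \lambda-\lambda_{*}\rangle + \delta\langle\dot{\lambda}, Ax-b\rangle$. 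The decisive point is then that, upon forming $\ddot{\varphi}+\tfrac{\alpha}{t}\dot{\varphi} + \theta t\dot{W}$, the residual batch $D$ is exactly the negative of the two $\theta t$-weighted terms appearing at the end of $\theta t\dot{W}$, so they annihilate each other.

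What survives is $(1-\alpha\theta)\lVert(\dot{x},\dot{\lambda})\rVert^{2} + \bigl(\theta t\dot{\delta}(t)-\delta(t)\bigr)G(t)$ together with the two nonpositive square terms. To close I only need both surviving coefficients to be nonpositive. For the first, $\xi = \alpha\theta-\theta-1\geq 0$ (see \eqref{ener:xi}) gives $\alpha\theta \geq 1+\theta > 1$, so $1-\alpha\theta \leq 0$. For the second, Assumption \ref{assume:para2} provides $\tfrac{t\dot{\delta}(t)}{\delta(t)} \leq \tfrac{1-2\theta}{\theta}$, whence $\theta t\dot{\delta}(t) \leq (1-2\theta)\delta(t) \leq \delta(t)$; combined with $G(t)\geq 0$ from \eqref{ener:PD-gap}, both terms are nonpositive, and rearranging yields precisely the asserted inequality.

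I expect the main obstacle to be organizational rather than conceptual: the computation passes through a large number of inner-product terms, and the whole argument hinges on verifying that the $\theta t$-weighted extrapolation terms cancel in exactly the right way, first within $\langle\dot{z},\ddot{z}\rangle$ and then between the two halves $\ddot{\varphi}+\tfrac{\alpha}{t}\dot{\varphi}$ and $\theta t\dot{W}$. Once this bookkeeping is carried out carefully, the final sign analysis is immediate from $\xi\geq 0$ and the growth bound on $\delta$.
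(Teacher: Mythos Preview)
Your proposal is correct and follows essentially the same route as the paper: compute $\ddot{\varphi}+\tfrac{\alpha}{t}\dot{\varphi}$ and $\dot{W}$ separately by substituting \eqref{ds:timerescaled}, observe that the $\theta t$-weighted cross-terms $\pm\theta t\delta\langle A\dot{x},\lambda-\lambda_{*}\rangle$ and $\mp\theta t\delta\langle Ax-b,\dot{\lambda}\rangle$ cancel upon summation, apply the descent inequality \eqref{pre:f-bound}, and conclude by the sign checks $1-\alpha\theta\leq 0$ and $\theta t\dot{\delta}(t)-\delta(t)\leq 0$. The paper's proof is organized in the same two-halves-then-add fashion, with the identical cancellations and final use of Assumption~\ref{assume:para2}.
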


\begin{proof}
    Fix $t\geq t_{0}$. Differentiating $W$ with respect to time yields
    \begin{align}
        \dot{W}(t) = \: &\dot{\delta}(t)\Bigl[\mathcal{L}_{\beta}\left( x(t), \lambda_{*} \right) - \mathcal{L}_{\beta}(x_{*}, \lambda(t))\Bigr] \nonumber\\
        &+ \delta(t) \Bigl[\left\langle \nabla_{x}\mathcal{L}_{\beta}\left( x(t), \lambda_{*} \right), \dot{x}(t)\right\rangle - \left\langle \nabla_{\lambda}\mathcal{L}_{\beta}(x_{*}, \lambda(t)), \dot{\lambda}(t)\right\rangle\Bigr] \nonumber\\
        &+ \langle \ddot{x}(t), \dot{x}(t)\rangle + \langle \ddot{\lambda}(t), \dot{\lambda}(t)\rangle. \nonumber
    \end{align}
    Recall the formulas for the gradients of $\mathcal{L}$:
    \begin{align*}
        \nabla_{x}\mathcal{L}_{\beta}\left( x(t), \lambda_{*} \right) &= \nabla f(x(t)) + A^{*}\lambda_{*} + \beta A^{*}(Ax(t) - b), \\
        \nabla_{\lambda}\mathcal{L}_{\beta}(x_{*}, \lambda(t)) &= Ax_{*} - b = 0,
    \end{align*}
    since $x_{*} \in \mathbb{F}$. 
    Plugging this into the expression for $\dot{W}(t)$ gives us
    \begin{align*}
        \dot{W}(t) = \: &\dot{\delta}(t)\Bigl[\mathcal{L}_{\beta}\left( x(t), \lambda_{*} \right) - \mathcal{L}_{\beta}(x_{*}, \lambda(t))\Bigr] \\
        &+ \delta(t) \left\langle \nabla_{x}\mathcal{L}_{\beta}(x(t), \lambda(t) + \theta t \dot{\lambda}(t)), \dot{x}(t)\right\rangle + \langle \ddot{x}(t), \dot{x}(t)\rangle \\
        &- \delta(t) \left\langle \lambda(t) - \lambda_{*} + \theta t \dot{\lambda}(t), A\dot{x}(t)\right\rangle \\
        &- \delta(t) \left\langle \nabla_{\lambda}\mathcal{L}_{\beta}(x(t) + \theta t \dot{x}(t), \lambda(t)), \dot{\lambda(t)}\right\rangle + \langle \ddot{\lambda}(t), \dot{\lambda}(t)\rangle \\
        &+ \delta(t) \left\langle Ax(t) - b + \theta t A\dot{x}(t), \dot{\lambda}(t)\right\rangle.
    \end{align*}
    By regrouping and using \eqref{ds:timerescaled}, we arrive at 
    \begin{equation}
    \label{eq: form of W prime}
    \begin{split}
        \dot{W}(t) = \: &\dot{\delta}(t)\Bigl[\mathcal{L}_{\beta}\left( x(t), \lambda_{*} \right) - \mathcal{L}_{\beta}(x_{*}, \lambda(t))\Bigr] \\
        &- \frac{\alpha}{t}\left\lVert \dot{x} \left( t \right) \right\rVert^{2} - \frac{\alpha}{t}\|\dot{\lambda}(t)\|^{2} - \delta(t) \left\langle \lambda(t) - \lambda_{*}, A\dot{x}(t)\right\rangle + \delta(t) \bigl\langle Ax(t) - b, \dot{\lambda}(t)\bigr\rangle.
    \end{split}
    \end{equation}
    On the other hand, by the chain rule, we have 
    \begin{align*}
        \dot{\varphi}(t) &= \langle x(t) - x_{*}, \dot{x}(t)\rangle + \bigl\langle \lambda(t) - \lambda_{*}, \dot{\lambda}(t)\bigr\rangle, \\
        \ddot{\varphi}(t) &= \langle x(t) - x_{*}, \ddot{x}(t)\rangle + \left\lVert \dot{x} \left( t \right) \right\rVert^{2} + \bigl\langle \lambda(t) - \lambda_{*}, \ddot{\lambda}(t)\bigr\rangle + \bigl\|\dot{\lambda}(t)\bigr\|^{2}.
    \end{align*}
    By combining these relations, \eqref{ds:timerescaled} and the fact that $x_{*} \in \mathbb{F}$, we get 
    \begin{align}
        \ddot{\varphi}(t) + \frac{\alpha}{t}\dot{\varphi}(t) = \: &\left\langle x(t) - x_{*}, \ddot{x}(t) + \frac{\alpha}{t}\dot{x}(t)\right\rangle + \left\langle \lambda(t) - \lambda_{*}, \ddot{\lambda}(t) + \frac{\alpha}{t}\dot{\lambda}(t)\right\rangle
        + \left\lVert \dot{x} \left( t \right) \right\rVert^{2} + \left\lVert \dot{\lambda} \left( t \right) \right\rVert^{2} \nonumber\\
        = \: &- \bigl\langle x(t) - x_{*}, \delta(t) \nabla_{x}\mathcal{L}_{\beta}(x(t), \lambda(t) + \theta t \dot{\lambda}(t))\bigr\rangle \nonumber\\
        &+ \bigl\langle \lambda(t) - \lambda_{*}, \delta(t) \nabla_{\lambda}\mathcal{L}_{\beta}(x(t) + \theta t \dot{\lambda}(t), \lambda(t))\bigr\rangle + \left\lVert \dot{x} \left( t \right) \right\rVert^{2} + \left\lVert \dot{\lambda} \left( t \right) \right\rVert^{2} \nonumber\\
        = \: &-\bigl\langle x(t) - x_{*}, \delta(t) \nabla_{x} \mathcal{L}_{\beta}\left( x(t), \lambda_{*} \right)\bigr\rangle - \bigl\langle Ax(t) - b, \delta(t) \bigl(\lambda(t) - \lambda_{*} + \theta t \dot{\lambda}(t)\bigr)\bigr\rangle \nonumber\\
        &+ \bigl\langle \lambda(t) - \lambda_{*}, \delta(t) \bigl(Ax(t) - b + \theta t A\dot{x}(t)\bigr)\bigr\rangle + \left\lVert \dot{x} \left( t \right) \right\rVert^{2} + \left\lVert \dot{\lambda} \left( t \right) \right\rVert^{2} \nonumber \\ 
        = \: &-\delta(t) \bigl\langle x(t) - x_{*}, \nabla_{x} \mathcal{L}_{\beta}\left( x(t), \lambda_{*} \right)\bigr\rangle \nonumber\\
        &-\theta t \delta(t) \bigl\langle Ax(t) - b, \dot{\lambda}(t)\bigr\rangle + \theta t \delta(t) \bigl\langle \lambda(t) - \lambda_{*}, A \dot{x}(t)\bigr\rangle \nonumber\\
        &+ \left\lVert \dot{x} \left( t \right) \right\rVert^{2} + \left\lVert \dot{\lambda} \left( t \right) \right\rVert^{2} \label{eq: form of phi}
    \end{align}
    The Lipschitz continuity of $\nabla f$ entails
    \begin{align*}
        &-\left\langle x(t) - x_{*}, \nabla_{x} \mathcal{L}_{\beta}\left( x(t), \lambda_{*} \right) \right\rangle \\
        &= -\langle x(t) - x_{*}, \nabla f(x(t))\rangle - \bigl\langle x(t) - x_{*}, A^{*}\lambda_{*}\bigr\rangle - \beta \left\lVert Ax \left( t \right) - b \right\rVert^{2} \\
        &\leq -(f(x(t)) - f(x_{*})) - \frac{1}{2\ell} \left\lVert \nabla f \left( x \left( t \right) \right) - \nabla f \left( x_{*} \right) \right\rVert^{2} - \langle \lambda_{*}, Ax(t) - b\rangle - \beta\left\lVert Ax \left( t \right) - b \right\rVert^{2} \\
        &= -\Bigl( \mathcal{L}_{\beta}\left( x(t), \lambda_{*} \right) - \mathcal{L}_{\beta}(x_{*}, \lambda(t))\Bigr) - \frac{1}{2\ell}\left\lVert \nabla f \left( x \left( t \right) \right) - \nabla f \left( x_{*} \right) \right\rVert^{2} - \frac{\beta}{2}\left\lVert Ax \left( t \right) - b \right\rVert^{2}.
    \end{align*}
    This, together with \eqref{eq: form of phi}, implies
    \begin{align}
        \ddot{\varphi}(t) + \frac{\alpha}{t}\dot{\varphi}(t) \leq \: &- \delta(t)\Bigl( \mathcal{L}_{\beta}\left( x(t), \lambda_{*} \right) - \mathcal{L}_{\beta}(x_{*}, \lambda(t))\Bigr) - \theta t \delta(t) \bigl\langle Ax(t) - b, \dot{\lambda}(t)\bigr\rangle \nonumber\\
        &+ \theta t \delta(t) \bigl\langle \lambda(t) - \lambda_{*}, A\dot{x}(t)\bigr\rangle + \left\lVert \dot{x} \left( t \right) \right\rVert^{2} + \left\lVert \dot{\lambda} \left( t \right) \right\rVert^{2} \nonumber\\
        &- \frac{\delta(t)}{2\ell}\left\lVert \nabla f \left( x \left( t \right) \right) - \nabla f \left( x_{*} \right) \right\rVert^{2} - \frac{\beta \delta(t)}{2} \left\lVert Ax \left( t \right) - b \right\rVert^{2}. \label{eq: inequality for phi}
    \end{align}
    Multiplying \eqref{eq: form of W prime} by $\theta t > 0$ and then adding the result to \eqref{eq: inequality for phi} yields  
    \begin{align*}
        \ddot{\varphi}(t) + \frac{\alpha}{t}\dot{\varphi}(t) + \theta t \dot{W}(t) = \: &-\bigl(\delta(t) - \theta t \dot{\delta}(t)\bigr) \Bigl(\mathcal{L}_{\beta}\left( x(t), \lambda_{*} \right) - \mathcal{L}_{\beta}(x_{*}, \lambda(t))\Bigr) \\
        &-\frac{\delta(t)}{2\ell}\left\lVert \nabla f \left( x \left( t \right) \right) - \nabla f \left( x_{*} \right) \right\rVert^{2} - \frac{\beta \delta(t)}{2}\left\lVert Ax \left( t \right) - b \right\rVert^{2} \\
        &+(1 - \theta \alpha) \left\lVert \left( \dot{x} \left( t \right) , \dot{\lambda} \left( t \right) \right) \right\rVert^{2} \\
        &\leq -\frac{\delta(t)}{2\ell}\left\lVert \nabla f \left( x \left( t \right) \right) - \nabla f \left( x_{*} \right) \right\rVert^{2} - \frac{\beta \delta(t)}{2}\left\lVert Ax \left( t \right) - b \right\rVert^{2},
    \end{align*}
    where the last inequality follows from Assumption \ref{assume:para2}:
    \begin{align*}
        1 - \theta \alpha & \leq -\theta < 0 , \\
        - \delta(t) + \theta t \dot{\delta}(t) &\leq (2\theta - 1)\delta(t) + \theta t \dot{\delta}(t) \leq 0.
    \end{align*}
    The desired result then follows after some rearranging. 
\end{proof}

The following lemma ensures that the first condition of Opial's Lemma is met. 

\begin{lem}\label{lem: first condition of opial lemma}
    Let $(x, \lambda) : \left[ t_{0}, +\infty \right) \to \mathcal{X} \times \mathcal{Y}$ be a solution to \eqref{ds:timerescaled} and $(x_{*}, \lambda_{*}) \in \mathbb{S}$. Then the positive part $[\dot{\varphi}]_{+}$ of $\dot{\varphi}$ belongs to $\sL^{1}\left[ t_{0}, +\infty \right)$ and the limit $\lim_{t \to +\infty} \varphi(t)$ exists. 
\end{lem}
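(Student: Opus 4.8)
The plan is to establish the two assertions in turn, obtaining the integrability of $[\dot\varphi]_+$ as a consequence of Lemma \ref{lem: differential equation for phi} and then deducing the existence of $\lim_{t\to+\infty}\varphi(t)$ by a standard monotonicity argument. First I would discard the two manifestly nonnegative terms in the inequality of Lemma \ref{lem: differential equation for phi}, retaining only
\[
\ddot\varphi(t) + \frac{\alpha}{t}\dot\varphi(t) \leq -\theta t \dot{W}(t) \qquad \forall t\geq t_{0}.
\]

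Next I would multiply by the integrating factor $t^{\alpha}$, so that the left-hand side becomes $\frac{d}{dt}\bigl(t^{\alpha}\dot\varphi(t)\bigr)$, and integrate from $t_{0}$ to $t$. The right-hand side is handled by a single integration by parts on $\int_{t_{0}}^{t} s^{\alpha+1}\dot{W}(s)\,ds$; since $W(s)\geq 0$ by \eqref{eq:formula for W}, the resulting boundary term $-\theta t^{\alpha+1}W(t)$ is nonpositive and may be dropped, leaving the pointwise bound
\[
\dot\varphi(t) \leq \frac{C}{t^{\alpha}} + \frac{\theta(\alpha+1)}{t^{\alpha}}\int_{t_{0}}^{t} s^{\alpha} W(s)\,ds, \qquad C := t_{0}^{\alpha}\dot\varphi(t_{0}) + \theta t_{0}^{\alpha+1}W(t_{0}).
\]

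Taking positive parts and integrating over $[t_{0},+\infty)$ reduces the matter to two integrals. The first, a multiple of $\int_{t_{0}}^{\infty} t^{-\alpha}\,dt$, is finite since $\alpha>3>1$. The second I would treat by Tonelli's theorem: exchanging the order of integration turns $\int_{t_{0}}^{\infty} t^{-\alpha}\bigl(\int_{t_{0}}^{t} s^{\alpha} W(s)\,ds\bigr)dt$ into $\frac{1}{\alpha-1}\int_{t_{0}}^{\infty} s\,W(s)\,ds$, so everything hinges on showing $\int_{t_{0}}^{\infty} s\,W(s)\,ds<+\infty$. This last estimate is the main obstacle. Splitting $W$ via \eqref{eq:formula for W} and \eqref{ener:PD-gap:Lag}, the velocity contribution is controlled by \eqref{eq:int estimate 3} (with $\xi>0$ under Assumption \ref{assume:para2}), the feasibility contribution by \eqref{eq:integral for feasibility condition}, and the primal-dual gap contribution—the delicate one—by invoking the inequality $\delta(t)\leq \sigma(t)/C_{2}$ from \eqref{eq:inequality for delta and sigma}, which upgrades the gap integral to \eqref{eq:int estimate 1} once we recall that the gap is nonnegative by \eqref{ener:PD-gap}. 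This chain of estimates yields $[\dot\varphi]_{+}\in\sL^{1}[t_{0},+\infty)$.

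Finally, for the convergence of $\varphi$, I would introduce $p(t) := \varphi(t) - \int_{t_{0}}^{t}[\dot\varphi(s)]_{+}\,ds$, which is nonincreasing because $\dot p = \dot\varphi - [\dot\varphi]_{+} = -[\dot\varphi]_{-}\leq 0$, and bounded below because $\varphi\geq 0$ and $[\dot\varphi]_{+}\in\sL^{1}$. Hence $p$ admits a finite limit at infinity; since $\int_{t_{0}}^{t}[\dot\varphi]_{+}$ also converges to a finite limit, so does $\varphi$, and the proof is complete.
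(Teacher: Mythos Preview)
Your argument is correct and follows essentially the same route as the paper: the paper multiplies the inequality of Lemma \ref{lem: differential equation for phi} by $t$, verifies that $t\mapsto tW(t)$ belongs to $\sL^{1}[t_{0},+\infty)$ via \eqref{eq:int estimate 3} and the bound $\delta\leq\sigma/C_{2}$ together with \eqref{eq:int estimate 1}, and then invokes the packaged Lemma \ref{lem:lemma a4}, whose proof is precisely your integrating-factor/integration-by-parts/Tonelli computation followed by the monotone auxiliary function $p(t)=\varphi(t)-\int_{t_{0}}^{t}[\dot\varphi]_{+}$. The only cosmetic point is that your constant $C=t_{0}^{\alpha}\dot\varphi(t_{0})+\theta t_{0}^{\alpha+1}W(t_{0})$ need not be nonnegative, so when passing to positive parts you should replace $C$ by $|C|$ (or $[C]_{+}$); this does not affect the argument.
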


\begin{proof}
    For any $t\geq t_{0}$, we multiply \eqref{lem: differential equation for phi} by $t$ and drop the last two norm squared terms to obtain 
    \[
        t \ddot{\varphi}(t) + \alpha \dot{\varphi}(t) + \theta t^{2} \dot{W}(t) \leq 0. 
    \]
    Recall from \eqref{eq:formula for W} that for every $t\geq t_{0}$ we have 
    \begin{equation}\label{eq:formula for tW(t)}
        tW(t) = t\delta(t) \bigl[\mathcal{L}_{\beta}\left( x(t), \lambda_{*} \right) - \mathcal{L}_{\beta}(x_{*}, \lambda(t))\bigr] + \frac{t}{2}\left\lVert \left( \dot{x} \left( t \right) , \dot{\lambda} \left( t \right) \right) \right\rVert^{2}.
    \end{equation}
    On the one hand, according to \eqref{eq:int estimate 3}, the second summand of the previous expression belongs to $\sL^{1}\left[ t_{0}, +\infty \right)$. On the other hand, using \eqref{eq:inequality for delta and sigma} and \eqref{eq:int estimate 1}, we assert that
    \begin{equation*}
        \int_{t_{0}}^{+\infty} t \delta(t) \bigl[\mathcal{L}_{\beta}\left( x(t), \lambda_{*} \right) - \mathcal{L}_{\beta}(x_{*}, \lambda(t))\bigr] dt
        \leq \frac{1}{C_{2}} \int_{t_{0}}^{+\infty} t \sigma(t) \bigl[\mathcal{L}_{\beta}\left( x(t), \lambda_{*} \right) - \mathcal{L}_{\beta}(x_{*}, \lambda(t))\bigr] dt
        < +\infty.
    \end{equation*}
    Hence, the first summand of \eqref{eq:formula for tW(t)} also belongs to $ \sL^{1}\left[ t_{0}, +\infty \right)$, which implies that the mapping $t \mapsto tW(t)$ belongs to $\sL^{1}\left[ t_{0}, +\infty \right)$ as well. For achieving the desired conclusion, we make use of Lemma \ref{lem:lemma a4} with $\phi := \varphi$ and $w := \theta W$.
\end{proof}

The following results guarantee that the second condition of Opial's Lemma is also met. 

\begin{lem}\label{lem: technical inequality}
    Let $(x, \lambda) : \left[ t_{0}, +\infty \right) \to \mathcal{X} \times \mathcal{Y}$ be a solution to \eqref{ds:timerescaled} and $(x_{*}, \lambda_{*}) \in \mathbb{S}$. The following inequality holds for every $t\geq t_{0}$
    \begin{align*}
        &\frac{\alpha}{t \delta(t)} \frac{d}{dt}\left\lVert \left( \dot{x} \left( t \right) , \dot{\lambda} \left( t \right) \right) \right\rVert^{2} + 2\left\langle \ddot{x}(t) + \frac{\alpha}{t}\dot{x}(t), A^{*}(\lambda(t) - \lambda_{*})\right\rangle \\ 
        &+\theta \frac{d}{dt}\Bigl(t \delta(t) \left\| A^{*}(\lambda(t) - \lambda_{*})\right\|^{2}\Bigr) + \bigl((1 - \theta)\delta(t) - \theta t \dot{\delta}(t)\bigr) \left\|A^{*}(\lambda(t) - \lambda_{*})\right\|^{2} \\
        \leq \:\: &\delta(t) \Bigl[ 2\| \nabla f(x(t)) - \nabla f(x_{*})\|^{2} + \left(2\beta^{2} \|A\|^{2} + 1\right)\left\lVert Ax \left( t \right) - b \right\rVert^{2}\Bigr].
    \end{align*}
\end{lem}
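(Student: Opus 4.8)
The plan is to feed the dynamics \eqref{ds:timerescaled} into the two ``structural'' terms on the left—the velocity-derivative term and $2\langle\ddot{x}+\frac{\alpha}{t}\dot{x},A^{*}(\lambda-\lambda_{*})\rangle$—reduce everything to a combination of squared norms and inner products, and then absorb the inner products into the right-hand side by Young's inequality. First I would treat $2\langle\ddot{x}+\frac{\alpha}{t}\dot{x},A^{*}(\lambda-\lambda_{*})\rangle$: the first line of \eqref{ds:timerescaled} turns $\ddot{x}+\frac{\alpha}{t}\dot{x}$ into $-\delta(t)\nabla_{x}\Lb(x,\lambda+\theta t\dot{\lambda})$, and expanding $\nabla_{x}\Lb$ together with the optimality relation $\nabla f(x_{*})+A^{*}\lambda_{*}=0$ from \eqref{intro:opt-Lag} lets me rewrite $\nabla f(x)+A^{*}\lambda$ as $(\nabla f(x)-\nabla f(x_{*}))+A^{*}(\lambda-\lambda_{*})$. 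This produces a gradient--dual cross term, a $-2\delta\|A^{*}(\lambda-\lambda_{*})\|^{2}$ term, a feasibility--dual cross term, and a mixed term $-2\delta\theta t\langle A^{*}\dot{\lambda},A^{*}(\lambda-\lambda_{*})\rangle$.

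The next step is to expand $\theta\frac{d}{dt}\bigl(t\delta\|A^{*}(\lambda-\lambda_{*})\|^{2}\bigr)$ by the product rule, which yields $\theta(\delta+t\dot{\delta})\|A^{*}(\lambda-\lambda_{*})\|^{2}+2\theta t\delta\langle A^{*}\dot{\lambda},A^{*}(\lambda-\lambda_{*})\rangle$. Two exact cancellations drive the computation: the mixed term here cancels the one coming from $2\langle\ddot{x}+\frac{\alpha}{t}\dot{x},A^{*}(\lambda-\lambda_{*})\rangle$, and once $\bigl((1-\theta)\delta-\theta t\dot{\delta}\bigr)\|A^{*}(\lambda-\lambda_{*})\|^{2}$ is added the two $\theta t\dot{\delta}$ contributions cancel as well. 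Collecting the $\|A^{*}(\lambda-\lambda_{*})\|^{2}$ coefficient, it telescopes to the clean value $-\delta$, which I will later use as a reservoir. In parallel I would expand the velocity-derivative term using both lines of \eqref{ds:timerescaled}; here the two $\theta t$-weighted mixed quantities $\langle\dot{x},A^{*}\dot{\lambda}\rangle$ and $\langle\dot{\lambda},A\dot{x}\rangle$ cancel exactly, leaving $-\frac{2\alpha^{2}}{t^{2}\delta}\|(\dot{x},\dot{\lambda})\|^{2}$ together with velocity--gradient, velocity--dual and velocity--feasibility inner products.

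Finally I would discharge every remaining inner product by Young's inequality, combined with $\|A^{*}(Ax-b)\|\le\|A\|\,\|Ax-b\|$, splitting each cross term into a square absorbed by one of the two negative reservoirs $-\frac{2\alpha^{2}}{t^{2}\delta}\|(\dot{x},\dot{\lambda})\|^{2}$ and $-\delta\|A^{*}(\lambda-\lambda_{*})\|^{2}$ and a square contributing either $\|\nabla f(x)-\nabla f(x_{*})\|^{2}$ or $\|Ax-b\|^{2}$; the constants $2$ and $2\beta^{2}\|A\|^{2}+1$ then emerge from the particular choice of Young weights. The delicate point, and the main obstacle, is exactly this bookkeeping: the weights must be tuned so that the velocity norms generated by the velocity--gradient/dual/feasibility terms remain within the $\frac{2\alpha^{2}}{t^{2}\delta}\|(\dot{x},\dot{\lambda})\|^{2}$ budget while the dual norms generated by the gradient--dual and feasibility--dual terms remain within the $\delta\|A^{*}(\lambda-\lambda_{*})\|^{2}$ budget, and simultaneously the gradient- and feasibility-norm coefficients land on precisely $2\delta$ and $(2\beta^{2}\|A\|^{2}+1)\delta$. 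The monotonicity of $\delta$ and the sign information furnished by Assumption \ref{assume:para2}, in particular $(1-\theta)\delta-\theta t\dot{\delta}>0$, are what keep these reservoir coefficients usable for every $t\ge t_{0}$.
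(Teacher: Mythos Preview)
Your structural calculations are correct: the $\theta t$-weighted mixed terms $\langle\dot{x},A^{*}\dot{\lambda}\rangle$ in the velocity-derivative expansion do cancel, the cross term $-2\delta\theta t\langle A^{*}\dot{\lambda},A^{*}(\lambda-\lambda_{*})\rangle$ does cancel against the product-rule expansion of $\theta\frac{d}{dt}(t\delta\|A^{*}(\lambda-\lambda_{*})\|^{2})$, and the $\|A^{*}(\lambda-\lambda_{*})\|^{2}$ coefficient does collapse to exactly $-\delta$. But the final step---``discharge every remaining inner product by Young's inequality, splitting each cross term into a square absorbed by one of the two negative reservoirs''---does not close with the stated constants. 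After your substitutions the residual cross terms include $-\tfrac{2\alpha}{t}\langle A^{*}(\lambda-\lambda_{*}),\dot{x}\rangle$; any termwise Young splitting of this must deposit a positive multiple of $\delta\|A^{*}(\lambda-\lambda_{*})\|^{2}$ somewhere, yet the $-\delta\|A^{*}(\lambda-\lambda_{*})\|^{2}$ reservoir is already exactly consumed by the two dual cross terms if you want $\tfrac{1}{b_{1}}+\tfrac{1}{b_{2}}$ to hit the targets $2$ and $2\beta^{2}\|A\|^{2}$. A simple budget count shows no choice of Young weights makes all constraints compatible. (Assumption~\ref{assume:para2} plays no role here; the lemma is a purely algebraic identity-plus-inequality.)

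The paper avoids this obstruction by a squared-norm argument rather than substitution-plus-Young: it writes $\delta^{2}\|\nabla f(x)-\nabla f(x_{*})+\beta A^{*}(Ax-b)\|^{2}$ and $\delta^{2}\|Ax-b\|^{2}$ directly from the two lines of \eqref{ds:timerescaled}, expands, and drops the nonnegative term $\|(\ddot{x},\ddot{\lambda})+\theta t\delta(A^{*}\dot{\lambda},-A\dot{x})\|^{2}$. Your route can be salvaged, but only if you \emph{do not} split the velocity--$\dot{x}$ cross term: apply Young once to the full combination $-\tfrac{2\alpha}{t}\langle g+d+\beta A^{*}r,\dot{x}\rangle$ with weight producing $+\delta\|g+d+\beta A^{*}r\|^{2}$, and simultaneously complete the square $-\delta\|d\|^{2}-2\delta\langle g+\beta A^{*}r,d\rangle=-\delta\|g+d+\beta A^{*}r\|^{2}+\delta\|g+\beta A^{*}r\|^{2}$ on the dual side (here $g=\nabla f(x)-\nabla f(x_{*})$, $d=A^{*}(\lambda-\lambda_{*})$, $r=Ax-b$). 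The two $\|g+d+\beta A^{*}r\|^{2}$ terms cancel exactly, and $\|g+\beta A^{*}r\|^{2}\le 2\|g\|^{2}+2\beta^{2}\|A\|^{2}\|r\|^{2}$ plus $\tfrac{2\alpha}{t}\langle r,\dot{\lambda}\rangle\le\delta\|r\|^{2}+\tfrac{\alpha^{2}}{t^{2}\delta}\|\dot{\lambda}\|^{2}$ deliver precisely $2$ and $2\beta^{2}\|A\|^{2}+1$. This grouped cancellation is the missing idea; without it your bookkeeping cannot balance.
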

\begin{proof}
    Let $t \geq t_{0}$ be fixed. From \eqref{ds:timerescaled} and the fact that $A^{*}\lambda_{*} = -\nabla f(x_{*})$, we have
    \begin{align}
    & \delta^{2} \left( t \right) \bigl\| \nabla f(x(t)) - \nabla f(x_{*}) + \beta A^{*}(Ax(t) - b)\bigr\|^{2} \nonumber\\
    = \     & \left\|\ddot{x}(t) + \frac{\alpha}{t}\dot{x}(t) + \delta(t) A^{*}\bigl(\lambda(t) - \lambda_{*} + \theta t \dot{\lambda}(t)\bigr)\right\|^{2} \nonumber\\
    = \     & \left\| \ddot{x}(t) + \frac{\alpha}{t}\dot{x}(t)\right\|^{2} + \delta^{2}(t) \bigl\| A^{*}\bigl( \lambda(t) - \lambda_{*} + \theta t \dot{\lambda}(t)\bigr)\bigr\|^{2} \nonumber\\
    & \quad + 2 \delta(t) \left\langle \ddot{x}(t) + \frac{\alpha}{t}\dot{x}(t), A^{*}\bigl(\lambda(t) - \lambda_{*}\bigr)\right\rangle + 2\theta t \delta(t) \bigl\langle \ddot{x}(t), A^{*} \dot{\lambda}(t)\bigr\rangle + 2\alpha \theta t \delta(t) \bigl\langle \dot{x}(t), A^{*} \dot{\lambda}(t)\bigr\rangle. \label{eq:technical inequality 1}
    \end{align}
    Again using \eqref{ds:timerescaled} yields
    \begin{align}
        \delta^{2} \left( t \right) \left\lVert Ax \left( \left( t \right) \right) - b \right\rVert ^{2} = \: &\left\| \ddot{\lambda}(t) + \frac{\alpha}{t}\dot{\lambda}(t) - \theta t \delta(t) A\dot{x}(t)\right\|^{2} \nonumber\\
        = \: &\left\| \ddot{\lambda}(t) + \frac{\alpha}{t}\dot{\lambda}(t)\right\|^{2} + \theta^{2} t^{2} \delta^{2} \left( t \right) \left\lVert A \dot{x} \left( t \right) \right\rVert^{2} \nonumber\\
        & -2\theta t \delta(t) \bigl\langle \ddot{\lambda}(t), A\dot{x}(t)\bigr\rangle - 2\alpha\theta \delta(t) \bigl\langle \dot{\lambda}(t), A\dot{x}(t)\bigr\rangle \label{eq:technical inequality 2}
    \end{align}
    Adding \eqref{eq:technical inequality 1} and \eqref{eq:technical inequality 2} together produces
    \begin{align}
        & \delta^{2} \left( t \right) \bigl\| \nabla f(x(t)) - \nabla f(x_{*}) + \beta A^{*}(Ax(t) - b)\bigr\|^{2} + \delta^{2} \left( t \right) \left\lVert Ax \left( \left( t \right) \right) - b \right\rVert ^{2} \nonumber\\
        &= \left\|\bigl(\ddot{x}(t), \ddot{\lambda}(t)\bigr) + \frac{\alpha}{t}\bigl(\dot{x}(t), \dot{\lambda}(t)\bigr)\right\|^{2} + \delta^{2}(t)\bigl\| A^{*}\bigl( \lambda(t) - \lambda_{*} + \theta t \dot{\lambda}(t)\bigr)\bigr\|^{2} + \theta^{2} t^{2} \delta^{2}(t) \left\lVert A \dot{x} \left( t \right) \right\rVert^{2} \nonumber\\
        &\quad + 2 \theta t \delta(t) \bigl\langle \ddot{x}(t), A^{*}\dot{\lambda}(t)\bigr\rangle - 2 \theta t \delta(t) \bigl\langle \ddot{\lambda}(t), A\dot{x}(t)\bigr\rangle + 2 \delta(t) \left\langle \ddot{x}(t) + \frac{\alpha}{t}\dot{x}(t), A^{*}(\lambda(t) - \lambda_{*})\right\rangle. \label{eq:technical inequality 3}
        \end{align}
        On the one hand, we have 
        \begin{align}
            &\theta^{2} t^{2} \delta^{2}(t) \left\lVert A \dot{x} \left( t \right) \right\rVert^{2} + 2\theta t \delta(t) \bigl\langle \ddot{x}(t), A^{*}\dot{\lambda}(t)\bigr\rangle - 2 \theta t \delta(t) \bigl\langle \ddot{\lambda}(t), A\dot{x}(t)\bigr\rangle \nonumber\\
            = \     & \theta^{2} t^{2} \delta^{2}(t) \bigl\| \bigl(A^{*}\dot{\lambda}(t), -A\dot{x}(t)\bigr)\bigr\|^{2} - \theta^{2} t^{2} \delta^{2}(t) \bigl\|A^{*} \dot{\lambda}(t)\bigr\|^{2} + 2 \theta t \delta(t) \bigl\langle \bigl(\ddot{x}(t), \ddot{\lambda}(t)\bigr), \bigl( A^{*}\dot{\lambda}(t), - A\dot{x}(t)\bigr)\bigr\rangle \nonumber\\
            = \     & - \bigl\|\bigl(\ddot{x}(t), \ddot{\lambda}(t)\bigr)\bigr\|^{2} + \bigl\|\bigl(\ddot{x}(t), \ddot{\lambda}(t)\bigr) + \theta t \delta(t) A^{*}\bigl( A^{*}\dot{\lambda}(t), -A\dot{x}(t)\bigr)\bigr\|^{2} - \theta^{2} t^{2} \delta^{2}(t) \bigl\|A^{*}\dot{\lambda}(t)\bigr\|^{2} \nonumber\\
            \geq \  & - \bigl\| \bigl(\ddot{x}(t), \ddot{\lambda}(t)\bigr)\bigr\|^{2} - \theta^{2} t^{2} \delta^{2}(t) \bigl\|A^{*}\dot{\lambda}(t)\bigr\|^{2}. \label{eq:technical inequality 4}
        \end{align}
        On the other hand, it holds 
        \begin{align}
            &\left\| \bigl(\ddot{x}(t), \ddot{\lambda}(t)\bigr) + \frac{\alpha}{t}\bigl(\dot{x}(t), \dot{\lambda}(t)\bigr)\right\|^{2} - \bigl\|\bigl(\ddot{x}(t), \ddot{\lambda}(t)\bigr)\bigr\|^{2} \nonumber\\
            = \     & \frac{\alpha^{2}}{t^{2}} \bigl\|\bigl(\dot{x}(t), \dot{\lambda}(t)\bigr)\bigr\|^{2} + 2\frac{\alpha}{t}\bigl\langle \bigl(\ddot{x}(t), \ddot{\lambda}(t)\bigr), \bigl( \dot{x}(t), \dot{\lambda}(t)\bigr)\bigr\rangle \geq \frac{\alpha}{t} \frac{d}{dt}\bigl\| \bigl( \dot{x}(t), \dot{\lambda}(t)\bigr)\bigr\|^{2}. \label{eq:technical inequality 5}
        \end{align}
        Moreover, 
        \begin{align}
            &\delta(t)\bigl\| A^{*}\bigl( \lambda(t) - \lambda_{*} + \theta t \dot{\lambda}(t)\bigr)\bigr\|^{2} - \theta^{2} t^{2} \delta(t) \bigl\|A^{*} \dot{\lambda}(t)\bigr\|^{2} \nonumber\\
            = \     & \delta(t)\bigl\| A^{*}( \lambda(t) - \lambda_{*})\bigr\|^{2} + 2\theta t \delta(t) \bigl\langle A A^{*}( \lambda(t) - \lambda_{*}), \dot{\lambda(t)}\bigr\rangle \nonumber\\
            = \     & \bigl((1 - \theta)\delta(t) - \theta t \dot{\delta}(t)\bigr) \bigl\| A^{*} \bigl( \lambda(t) - \lambda_{*}\bigr)\bigr\|^{2} + \theta \delta(t) \bigl\| A^{*}( \lambda(t) - \lambda_{*})\bigr\|^{2} \nonumber\\
            &\quad + \theta t \dot{\delta}(t) \left\lVert A^{*} \left( \lambda \left( t \right) - \lambda_{*} \right) \right\rVert^{2} + \theta \frac{d}{dt} \bigl\| A^{*}(\lambda(t) - \lambda_{*})\|^{2} \nonumber\\
            = \     & \bigl((1 - \theta)\delta(t) - \theta t \dot{\delta}(t)\bigr) \left\lVert A^{*} \left( \lambda \left( t \right) - \lambda_{*} \right) \right\rVert^{2} + \theta \frac{d}{dt}\Bigl(t \delta(t) \bigl\|A^{*}(\lambda(t) - \lambda_{*})\bigr\|^{2}\Bigr). \label{eq:technical inequality 6}
        \end{align}
        Now, using \eqref{eq:technical inequality 4}, \eqref{eq:technical inequality 5} and \eqref{eq:technical inequality 6} in \eqref{eq:technical inequality 3} yields 
        \begin{align*}
            & \delta^{2} \left( t \right) \left( t \right) \bigl\| \nabla f(x(t)) - \nabla f(x_{*}) + \beta A^{*}(Ax(t) - b)\bigr\|^{2} + \delta^{2} \left( t \right) \left\lVert Ax \left( \left( t \right) \right) - b \right\rVert ^{2} \\
            \geq \  & \left\|\bigl(\ddot{x}(t), \ddot{\lambda}(t)\bigr) + \frac{\alpha}{t}\bigl(\dot{x}(t), \dot{\lambda}(t)\bigr)\right\|^{2} + \delta^{2}(t)\bigl\| A^{*}\bigl( \lambda(t) - \lambda_{*} + \theta t \dot{\lambda}(t)\bigr)\bigr\|^{2} \\
            &\quad - \bigl\| \bigl( \ddot{x}(t), \ddot{\lambda}(t)\bigr)\bigr\|^{2} - \theta^{2} t^{2} \delta^{2}(t) \bigl\|A^{*}\dot{\lambda}(t)\bigr\|^{2} + 2 \delta(t) \left\langle \ddot{x}(t) + \frac{\alpha}{t}\dot{x}(t), A^{*}(\lambda(t) - \lambda_{*})\right\rangle \\
            \geq \  & \frac{\alpha}{t}\frac{d}{dt}\bigl\|\bigl(\dot{x}(t), \dot{\lambda}(t)\bigr)\bigr\|^{2} + 2\delta(t) \left\langle \ddot{x}(t) + \frac{\alpha}{t}\dot{x}(t), A^{*}(\lambda(t) - \lambda_{*})\right\rangle \\ 
            &\quad +\delta(t) \left[ \bigl((1 - \theta)\delta(t) - \theta t \dot{\delta}(t)\bigr)\bigl\|A^{*}(\lambda(t) - \lambda_{*})\bigr\|^{2} + \theta \frac{d}{dt} \Bigl( t \delta(t) \left\lVert A^{*} \left( \lambda \left( t \right) - \lambda_{*} \right) \right\rVert^{2}\Bigr)\right] 
        \end{align*}
        Finally, since
        \begin{align*}
            &\bigl\| \nabla f(x(t)) - \nabla f(x_{*}) + \beta A^{*}(Ax(t) - b)\bigr\|^{2} + \left\lVert Ax \left( \left( t \right) \right) - b \right\rVert ^{2} \\
            \leq \  & 2\left\lVert \nabla f \left( x \left( t \right) \right) - \nabla f \left( x_{*} \right) \right\rVert^{2} + (2\beta \|A\|^{2} + 1) \left\lVert Ax \left( t \right) - b \right\rVert^{2}, 
        \end{align*}
        the conclusion follows after dividing the inequality by $\delta(t)$.
\end{proof}

The following proposition provides us with the main integrability result that will be used for verifying the second condition of Opial's Lemma. 
\begin{prop}
    Let $(x, \lambda) : \left[ t_{0}, +\infty \right) \to \mathcal{X} \times \mathcal{Y}$ be a solution to \eqref{ds:timerescaled} and $(x_{*}, \lambda_{*}) \in \mathbb{S}$. Then it holds 
    \begin{equation}\label{eq:integral for dual variable}
        \int_{t_{0}}^{+\infty} t \delta(t) \left\lVert A^{*} \left( \lambda \left( t \right) - \lambda_{*} \right) \right\rVert^{2} dt < +\infty.
    \end{equation}
\end{prop}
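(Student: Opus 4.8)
The plan is to integrate the differential inequality of Lemma \ref{lem: technical inequality} after multiplying it by $t>0$, so that the decisive zeroth-order term becomes $t\bigl((1-\theta)\delta(t) - \theta t\dot\delta(t)\bigr)\left\lVert A^*(\lambda(t)-\lambda_*)\right\rVert^2$ and the right-hand side turns into $t\delta(t)\bigl[2\|\nabla f(x(t))-\nabla f(x_*)\|^2 + (2\beta^2\|A\|^2+1)\|Ax(t)-b\|^2\bigr]$, whose integral over $[t_0,+\infty)$ is finite by \eqref{eq:integral for gradient} and \eqref{eq:integral for feasibility condition}. The goal is then to show that, after integrating from $t_0$ to $T$, every remaining term on the left is bounded below uniformly in $T$, so that $\int_{t_0}^T t\delta(t)\|A^*(\lambda(t)-\lambda_*)\|^2\,dt$ is bounded above uniformly in $T$; letting $T\to+\infty$ then yields \eqref{eq:integral for dual variable}.

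First I would dispose of the term $\frac{\alpha}{\delta(t)}\frac{d}{dt}\|(\dot x,\dot\lambda)\|^2$: an integration by parts produces the boundary contribution $\frac{\alpha}{\delta(T)}\|(\dot x(T),\dot\lambda(T))\|^2\ge 0$, a harmless constant at $t_0$, and the integral $\alpha\int \frac{\dot\delta}{\delta^2}\|(\dot x,\dot\lambda)\|^2\,dt\ge 0$ (here $\dot\delta\ge 0$ since $\delta$ is nondecreasing), so this whole contribution is bounded below by a constant without needing its finiteness. Next I would combine the third and fourth summands. Writing $h(t):=t\delta(t)\|A^*(\lambda(t)-\lambda_*)\|^2$, integration by parts on $\theta\int_{t_0}^T t\dot h\,dt$ gives $\theta Th(T) - \theta t_0 h(t_0) - \theta\int_{t_0}^T h\,dt$; adding this to the integral of the good term $t\bigl((1-\theta)\delta-\theta t\dot\delta\bigr)\|A^*(\lambda-\lambda_*)\|^2$ and collecting yields $\theta T h(T) - \theta t_0 h(t_0) + \int_{t_0}^T t\bigl[(1-2\theta)\delta - \theta t\dot\delta\bigr]\|A^*(\lambda-\lambda_*)\|^2\,dt$. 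By \eqref{eq: assumption 2 fact 2} the bracket is $\ge C_2\delta(t)$, so this sum is bounded below by $C_2\int_{t_0}^T t\delta\|A^*(\lambda-\lambda_*)\|^2\,dt - \theta t_0 h(t_0)$, i.e. it controls exactly the integral we want.

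The main obstacle is the cross term $\int_{t_0}^T 2t\langle \ddot x + \frac{\alpha}{t}\dot x, A^*(\lambda-\lambda_*)\rangle\,dt$, which mixes second- and zeroth-order information. I would handle it by writing $t\langle\ddot x, A^*(\lambda-\lambda_*)\rangle = t\frac{d}{dt}\langle\dot x, A^*(\lambda-\lambda_*)\rangle - t\langle\dot x, A^*\dot\lambda\rangle$ and integrating by parts in the first piece, which leaves the boundary term $2T\langle\dot x(T), A^*(\lambda(T)-\lambda_*)\rangle$ (bounded, since $T\|\dot x(T)\| = \mathcal{O}(1)$ by \eqref{eq:rate 1/t for velocity} and $\lambda(\cdot)-\lambda_*$ is bounded by \eqref{eq:trajectory bounded}), the absolutely convergent integral $\int 2t\langle\dot x, A^*\dot\lambda\rangle\,dt$ (bounded by $\|A\|\int t\|(\dot x,\dot\lambda)\|^2\,dt<+\infty$ via \eqref{eq:int estimate 3}), and a multiple of $\int_{t_0}^T\langle A\dot x, \lambda-\lambda_*\rangle\,dt$. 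This last integral is the crux: I would rewrite $A\dot x = \frac{d}{dt}(Ax-b)$ and integrate by parts once more, obtaining the boundary term $\langle Ax(T)-b, \lambda(T)-\lambda_*\rangle$ (which even tends to $0$ because $\|Ax(t)-b\|=\mathcal{O}(1/(t^2\delta(t)))$ by \eqref{eq:feasibility condition}) minus $\int\langle Ax-b, \dot\lambda\rangle\,dt$, the latter absolutely convergent since $\|Ax-b\|\,\|\dot\lambda\| = \mathcal{O}(1/(t^3\delta(t)))$ is integrable thanks to $\delta(t)\ge\delta(t_0)$. Hence the entire cross term is bounded uniformly in $T$. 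Assembling the three estimates in the integrated inequality and sending $T\to+\infty$ delivers $C_2\int_{t_0}^{+\infty} t\delta(t)\|A^*(\lambda(t)-\lambda_*)\|^2\,dt<+\infty$, which is the claim.
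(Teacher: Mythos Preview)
Your argument is correct, and it is genuinely more direct than the route taken in the paper. The paper first adds the inequality of Lemma~\ref{lem: differential equation for phi} to that of Lemma~\ref{lem: technical inequality}, multiplies the resulting combined inequality by the weight $t^{\alpha}$, integrates once, then divides by $t^{\alpha}$ and integrates a second time, invoking Lemma~\ref{lem:lemma a2} to control the resulting double integrals; the extra $\ddot\varphi+\tfrac{\alpha}{t}\dot\varphi+\theta t\dot W$ terms coming from Lemma~\ref{lem: differential equation for phi} are what makes the $t^{\alpha}$ weighting natural (since $t^{\alpha}\ddot\varphi+\alpha t^{\alpha-1}\dot\varphi$ is an exact derivative), but they also force the two-step integration. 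You bypass this entirely: working only with Lemma~\ref{lem: technical inequality}, multiplying by $t$ and integrating once, you show directly that every term on the left except the decisive one $\int t\bigl[(1-2\theta)\delta-\theta t\dot\delta\bigr]\lVert A^{*}(\lambda-\lambda_{*})\rVert^{2}\,dt$ is bounded below uniformly in $T$, and the right-hand side is bounded above by \eqref{eq:integral for gradient} and \eqref{eq:integral for feasibility condition}. The handling of the cross term via two nested integrations by parts (first producing $(\alpha-1)\int\langle A\dot x,\lambda-\lambda_{*}\rangle\,dt$, then rewriting $A\dot x=\tfrac{d}{dt}(Ax-b)$) is exactly how the paper eventually treats the analogous residual in \eqref{eq: fact 4}, so the final estimates coincide. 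What your approach buys is that it dispenses with Lemma~\ref{lem: differential equation for phi} and Lemma~\ref{lem:lemma a2} altogether; what the paper's approach buys is a slightly more systematic bookkeeping via the $t^{\alpha}$ kernel, at the cost of the second integration.
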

\begin{proof}
    Recall that from Lemma \ref{lem: differential equation for phi}, for every $t\geq t_{0}$, we have
    \[
    \ddot{\varphi}(t) + \frac{\alpha}{t}\dot{\varphi}(t) + \theta t \dot{W}(t) \leq - \frac{\delta(t)}{2\ell}\left\lVert \nabla f \left( x \left( t \right) \right) - \nabla f \left( x_{*} \right) \right\rVert^{2} - \frac{\beta \delta(t)}{2}\left\lVert Ax \left( t \right) - b \right\rVert^{2}.
    \]
    Now, to this inequality we add the one produced by Lemma \ref{lem: technical inequality}. For every $t\geq t_{0}$, it holds 
    \begin{align}
        &\ddot{\varphi}(t) + \frac{\alpha}{t}\dot{\varphi}(t) + \theta t \dot{W}(t) + \frac{\alpha}{t\delta(t)}\bigl\|\bigl(\dot{x}(t), \dot{\lambda}(t)\bigr)\bigr\|^{2} \nonumber\\
        &+ \theta \frac{d}{dt}\Bigl(t\delta(t) \bigl\|A^{*}(\lambda(t) - \lambda_{*})\bigr\|^{2}\Bigr) + 2\left\langle \ddot{x}(t) + \frac{\alpha}{t}\dot{x}(t), A^{*}(\lambda(t) - \lambda_{*})\right\rangle \nonumber\\
        \leq \: &-\bigl( (1 - \theta)\delta(t) - \theta t \dot{\delta}(t)\bigr)\bigl\|A^{*}(\lambda(t) - \lambda_{*})\bigr\|^{2} + \left(2 - \frac{1}{2\ell}\right) \delta(t) \left\lVert \nabla f \left( x \left( t \right) \right) - \nabla f \left( x_{*} \right) \right\rVert^{2} \nonumber\\
        &+\left(2\beta^{2} \|A\|^{2} + 1 - \frac{\beta}{2}\right) \delta(t) \left\lVert Ax \left( t \right) - b \right\rVert^{2} \nonumber\\
        \leq \: &-\bigl( (1 - \theta)\delta(t) - \theta t \dot{\delta}(t)\bigr)\bigl\|A^{*}(\lambda(t) - \lambda_{*})\bigr\|^{2} + C_{3} \delta(t) \left\lVert \nabla f \left( x \left( t \right) \right) - \nabla f \left( x_{*} \right) \right\rVert^{2} \nonumber\\
        &+C_{4} \delta(t) \left\lVert Ax \left( t \right) - b \right\rVert^{2}, \label{eq: inequality we will have to integrate}
    \end{align}
    where
    \[
    C_{3} := \left[2 - \frac{1}{2\ell}\right]_{+} \geq 0 \quad \text{and} \quad C_{4} := \left[2\beta^{2}\|A\|^{2} + 1 - \frac{\beta}{2}\right]_{+} \geq 0.
    \]
    Mutiplying \eqref{eq: inequality we will have to integrate} by $t^{\alpha}$ and integrating from $t_{0}$ to $t$, we obtain
    \begin{align}
        &I_{1}(t) + \theta I_{2}(t) + \alpha I_{3}(t) + \theta I_{4}(t) + 2 I_{5}(t) \nonumber\\
        &\leq \int_{t_{0}}^{t}s^{\alpha} \bigl((1 - \theta)\delta(s) - \theta s \dot{\delta}(s)\bigr) \bigl\|A^{*}(\lambda(s) - \lambda_{*})\bigr\|^{2} ds + C_{3} \int_{t_{0}}^{t} s^{\alpha} \delta(s) \|\nabla f(x(s)) - \nabla f(x_{*})\|^{2} ds \nonumber\\
        &\quad + C_{4}\int_{t_{0}}^{t}s^{\alpha} \delta(s) \|Ax(s) - b\|^{2} ds, \label{eq: inequality for I_1, I_2, I_3, I_4, and I_5}
    \end{align}
    where 
    \begin{align*}
        I_{1}(t) &:= \int_{t_{0}}^{t}\bigl(s^{\alpha} \ddot{\varphi}(s) + \alpha s^{\alpha - 1}\dot{\varphi}(s)\bigr) ds, \\
        I_{2}(t) &:= \int_{t_{0}}^{t} s^{\alpha + 1}\dot{W}(s) ds, \\
        I_{3}(t) &:= \int_{t_{0}}^{t} \frac{s^{\alpha - 1}}{\delta(s)} \frac{d}{ds} \bigl\|\bigl(\dot{x}(s), \dot{\lambda}(s)\bigr)\bigr\|^{2} ds, \\
        I_{4}(t) &:= \int_{t_{0}}^{t} s^{\alpha} \frac{d}{ds}\Bigl(s\delta(s) \bigl\|A^{*}(\lambda(s) - \lambda_{*})\bigr\|^{2}\Bigr) ds, \\
        I_{5}(t) &:= \int_{t_{0}}^{t}\bigl\langle s^{\alpha} \ddot{x}(s) + \alpha s^{\alpha - 1}\dot{x}(s), A^{*}(\lambda(s) - \lambda_{*})\bigr\rangle ds.
    \end{align*}
    We will furnish five different inequalities from computing each of these integrals separately. Let $t\geq t_{0}$ be fixed. 
    
    $\bullet$ \textit{The integral $I_{1}(t)$}. By the chain rule, for $s \geq t_{0}$ it holds 
    \[
    s^{\alpha}\ddot{\varphi}(s) + \alpha s^{\alpha - 1}\dot{\varphi}(s) = \frac{d}{ds}\bigl( s^{\alpha} \dot{\varphi}(s)\bigr), 
    \]
    which leads to 
    \begin{equation}\label{eq: inequality for I_1}
    0 = I_{1}(t) - t^{\alpha}\dot{\varphi}(t) + t_{0}^{\alpha}\dot{\varphi}(t_{0}) \leq I_{1}(t) - t^{\alpha}\dot{\varphi}(t) + |t_{0}^{\alpha}\dot{\varphi}(t_{0})|.
    \end{equation}
    
    $\bullet$ \textit{The integral $I_{2}(t)$}. Integration by parts gives 
    \[
    I_{2}(t) = t^{\alpha + 1}W(t) - t_{0}^{\alpha + 1}W(t_{0}) - (\alpha + 1)\int_{t_{0}}^{t}s^{\alpha}W(s)ds, 
    \]
    which implies
    \begin{equation}\label{eq: inequality for I_2}
        0 \leq t^{\alpha + 1}W(t) = I_{2}(t) + t_{0}^{\alpha + 1}W(t_{0}) + (\alpha + 1)\int_{t_{0}}^{t}s^{\alpha}W(s)ds.
    \end{equation}
    
    $\bullet$ \textit{The integral $I_{3}(t)$}. Again by integrating by parts, we get 
    \begin{align*}
        I_{3}(t) = \: &\frac{t^{\alpha - 1}}{\delta(t)} \bigl\|\bigl(\dot{x}(t), \dot{\lambda}(t)\bigr)\bigr\|^{2} - \frac{t_{0}^{\alpha - 1}}{\delta(t_{0})} \bigl\|\bigl(\dot{x}(t_{0}), \dot{\lambda}(t_{0})\bigr)\bigr\|^{2} \\
        &- \int_{t_{0}}^{t} \left[ \frac{(\alpha - 1) s^{\alpha - 2} \delta(s) - s^{\alpha - 1}\dot{\delta}(s)}{\delta^{2}(s)}\right] \bigl\|\bigl(\dot{x}(s), \dot{\lambda}(s)\bigr)\bigr\|^{2} ds. 
    \end{align*}
    For $s\geq t_{0}$, according to Assumption \ref{assume:para2} we have $\dot{\delta}(s) \geq 0$, hence $\delta$ is monotonically increasing and therefore
    \[
    \frac{(\alpha - 1) s^{\alpha - 2} \delta(s) - s^{\alpha - 1} \delta(s)}{\delta^{2}(s)} \leq \frac{(\alpha - 1)s^{\alpha - 2} \delta(s)}{\delta^{2}(s)} \leq \frac{(\alpha - 1)s^{\alpha}}{t_{0}^{2} \delta(t_{0})}.
    \]
    It follows that
    \begin{align}
        0 &\leq \frac{t^{\alpha - 1}}{\delta(t)}\bigl\|\bigl(\dot{x}(t), \dot{\lambda}(t)\bigr)\bigr\|^{2} \nonumber\\
        &= I_{3}(t) + \frac{t_{0}^{\alpha - 1}}{\delta(t_{0})}\bigl\|\bigl(\dot{x}(t_{0}), \dot{\lambda}(t_{0})\bigr)\bigr\|^{2} + \int_{t_{0}}^{t} \left[ \frac{(\alpha - 1) s^{\alpha - 2} \delta(s) - s^{\alpha - 1}\dot{\delta}(s)}{\delta^{2}(s)}\right] \bigl\|\bigl(\dot{x}(s), \dot{\lambda}(s)\bigr)\bigr\|^{2} ds \nonumber\\
        &\leq I_{3}(t) + \frac{t_{0}^{\alpha - 1}}{\delta(t_{0})}\bigl\|\bigl(\dot{x}(t_{0}), \dot{\lambda}(t_{0})\bigr)\bigr\|^{2} + \frac{\alpha - 1}{t_{0}^{2} \delta(t_{0})}\int_{t_{0}}^{t} s^{\alpha} \bigl\| \bigl( \dot{x}(s), \dot{\lambda}(s)\bigr)\bigr\|^{2} ds. \label{eq: inequality for I_3}
    \end{align}
    
    $\bullet$ \textit{The integral $I_{4}(t)$}. Yet again, integration by parts produces
    \begin{equation*}
        I_{4}(t) = t^{\alpha + 1} \delta(t) \left\lVert A^{*} \left( \lambda \left( t \right) - \lambda_{*} \right) \right\rVert^{2} - t_{0}^{\alpha + 1} \delta(t_{0}) \bigl\| A^{*}(\lambda(t_{0}) - \lambda_{*})\bigr\|^{2} - \alpha \int_{t_{0}}^{t}s^{\alpha} \delta(s) \left\lVert A^{*}(\lambda(s) - \lambda_{*}) \right\rVert^{2} ds,
    \end{equation*}
    and from here
    \begin{equation}\label{eq: inequality for I_4}
        t^{\alpha + 1} \delta(t) \left\lVert A^{*} \left( \lambda \left( t \right) - \lambda_{*} \right) \right\rVert^{2} = I_{4}(t) + t_{0}^{\alpha + 1} \delta(t_{0}) \bigl\| A^{*}(\lambda(t_{0}) - \lambda_{*})\bigr\|^{2} + \alpha \int_{t_{0}}^{t}s^{\alpha} \delta(s) \left\lVert A^{*}(\lambda(s) - \lambda_{*}) \right\rVert^{2} ds.
    \end{equation}
    
    $\bullet$ \textit{The integral $I_{5}(t)$}. Integration by parts entails
    \begin{align*}
        I_{5}(t) &= \int_{t_{0}}^{t} \left\langle \frac{d}{ds}\bigl( s^{\alpha} \dot{x}(s)\bigr), A^{*}(\lambda(s) - \lambda_{*})\right\rangle ds \\
        &= t^{\alpha} \bigl\langle \dot{x}(t), A^{*}(\lambda(t) - \lambda_{*})\bigr\rangle - t_{0}^{\alpha} \bigl\langle \dot{x}(t_{0}), A^{*}(\lambda(t_{0}) - \lambda_{*})\bigr\rangle - \int_{t_{0}}^{t} s^{\alpha} \bigl\langle \dot{x}(s), A^{*}\dot{\lambda}(s)\bigr\rangle ds.
    \end{align*}
    By the Cauchy-Schwarz inequality, we deduce that
    \[
    \int_{t_{0}}^{t} s^{\alpha} \bigl\langle \dot{x}(s), A^{*}\dot{\lambda}(s)\bigr\rangle ds \leq \left\lVert A \right\rVert \int_{t_{0}}^{t} s^{\alpha} \left\lVert \dot{x}(s) \right\rVert \left\lVert \dot{\lambda}(s) \right\rVert ds \leq \dfrac{\left\lVert A \right\rVert}{2} \int_{t_{0}}^{t} s^{\alpha}\Bigl( \bigl\|\dot{x}(s)\bigr\|^{2} + \bigl\| \dot{\lambda}(s)\bigr\|^{2}\Bigr) ds,
    \]
    and thus
    \begin{equation}\label{eq: inequality for I_5}
        \begin{split}
            0 \leq \: &I_{5}(t) - t^{\alpha} \bigl\langle \dot{x}(t), A^{*}(\lambda(t) - \lambda_{*})\bigr\rangle +  \bigl|t_{0}^{\alpha} \bigl\langle \dot{x}(t_{0}), A^{*}(\lambda(t_{0}) - \lambda_{*})\bigr\rangle\bigr| \\
            &+ \dfrac{\left\lVert A \right\rVert}{2} \int_{t_{0}}^{t} s^{\alpha} \bigl\| \bigl( \dot{x}(s), \dot{\lambda}(s)\bigr)\bigr\|^{2} ds.
        \end{split}
    \end{equation}
    Now, to the equality \eqref{eq: inequality for I_4}, we add the inequalities \eqref{eq: inequality for I_1}, \eqref{eq: inequality for I_2}, \eqref{eq: inequality for I_3} and \eqref{eq: inequality for I_5} and then proceed to employ \eqref{eq: inequality for I_1, I_2, I_3, I_4, and I_5}:
    \begin{align}
        &\theta t^{\alpha + 1} \delta(t) \left\lVert A^{*} \left( \lambda \left( t \right) - \lambda_{*} \right) \right\rVert^{2} \nonumber\\
        \leq \: &I_{1}(t) + \theta I_{2}(t) + \alpha I_{3}(t) + \theta I_{4}(t) + 2I_{5} - t^{\alpha} \dot{\varphi}(t) \nonumber\\
        &+\int_{t_{0}}^{t}s^{\alpha} \left[ \theta (\alpha + 1) W(s) + \left( \frac{\alpha(\alpha - 1)}{t_{0}^{2} \delta(t_{0})} + \left\lVert A \right\rVert\right)\right] \bigl\|\bigl(\dot{x}(s), \dot{\lambda}(s)\bigr)\bigr\|^{2} ds \nonumber\\
        &+ \theta \alpha \int_{t_{0}}^{t} s^{\alpha} \delta(s) \left\lVert A^{*}(\lambda(s) - \lambda_{*}) \right\rVert^{2} ds - 2 t^{\alpha} \bigl\langle \dot{x}(t), A^{*}(\lambda(t) - \lambda_{*})\bigr\rangle + C_{5} \nonumber\\
        \leq \: & -t^{\alpha} \dot{\varphi}(t) + \int_{t_{0}}^{t} s^{\alpha} V(s) ds + \int_{t_{0}}^{t} s^{\alpha} \Bigl[ \bigl(\theta(\alpha + 1) - 1\bigr)\delta(t) + \theta s \dot{\delta}(s)\Bigr] \left\lVert A^{*}(\lambda(s) - \lambda_{*}) \right\rVert^{2} ds \nonumber\\
        &-2 t^{\alpha} \bigl\langle \dot{x}(t), A^{*}(\lambda(t) - \lambda_{*})\bigr\rangle + C_{5}, \label{eq: we divide this one by t^alpha}
    \end{align}
    where, for $s\geq t_{0}$, 
    \begin{align*}
        V(s) := \: &\theta (\alpha + 1) W(s) + \left( \frac{\alpha(\alpha - 1)}{t_{0}^{2} \delta(_{0})} + \left\lVert A \right\rVert\right) \bigl\| \bigl( \dot{x}(s), \dot{\lambda}(s)\bigr)\bigr\|^{2} \\
        &+ C_{3} \delta(s) \|\nabla f(x(s)) - \nabla f(x_{*})\|^{2} + C_{4} \delta(s) \|Ax(s) - b\|^{2},
    \end{align*}
    and the constant $C_{5}$ is given by 
    \begin{align*}
        C_{5} := \: &t_{0}^{\alpha} |\dot{\varphi}(t_{0})| + \theta t_{0}^{\alpha + 1}W(t_{0}) + \alpha \frac{t_{0}^{\alpha - 1}}{\delta(t_{0})}\bigl\|\bigl(\dot{x}(t_{0}, \dot{\lambda}(t_{0}))\bigr)\bigr\|^{2} \\
        &+\theta t_{0}^{\alpha + 1} \delta(t_{0}) \bigl\| A^{*}(\lambda(t_{0}) - \lambda_{*})\bigr\|^{2} + 2 t_{0}^{\alpha} \bigl| \bigl\langle \dot{x}(t_{0}), A^{*}(\lambda(t_{0}) - \lambda_{*})\bigr\rangle\bigr| \geq 0.
    \end{align*}
    Now we divide \eqref{eq: we divide this one by t^alpha} by $t^{\alpha}$, thus obtaining
    \begin{align*}
        \theta t \delta(t) \left\lVert A^{*} \left( \lambda \left( t \right) - \lambda_{*} \right) \right\rVert^{2} \leq \: &- \dot{\varphi}(t) + \frac{1}{t^{\alpha}}\int_{t_{0}}^{t} s^{\alpha} V(s) ds \\
        &+ \frac{1}{t^{\alpha}}\int_{t_{0}}^{t} s^{\alpha} \Bigl[ \bigl(\theta(\alpha + 1) - 1\bigr)\delta(s) + \theta s \dot{\delta}(s)\Bigr] \left\lVert A^{*}(\lambda(s) - \lambda_{*}) \right\rVert^{2} ds \\
        &-2 \bigl\langle \dot{x}(t) , A^{*}(\lambda(t) - \lambda_{*})\bigr\rangle + \frac{C_{5}}{t^{\alpha}}.
    \end{align*}
    Now, we integrate this inequality from $t_{0}$ to $r$. We get
    \begin{align}
        &\theta \int_{t_{0}}^{r} t \delta(t) \left\lVert A^{*} \left( \lambda \left( t \right) - \lambda_{*} \right) \right\rVert^{2} dt \nonumber\\
        \leq \  & \varphi(t_{0}) - \varphi(r)  + \int_{t_{0}}^{r}\frac{1}{t^{\alpha}} \left( \int_{t_{0}}^{t} s^{\alpha} V(s) ds\right) dt \nonumber\\
        &\quad + \int_{t_{0}}^{r} \frac{1}{t^{\alpha}} \left(\int_{t_{0}}^{t} s^{\alpha} \Bigl[ \bigl(\theta(\alpha + 1) - 1\bigr)\delta(s) + \theta s \dot{\delta}(s)\Bigr] \left\lVert A^{*}(\lambda(s) - \lambda_{*}) \right\rVert^{2} ds\right) dt \nonumber\\
        &\quad - 2 \int_{t_{0}}^{r} \bigl\langle A\dot{x}(t), \lambda(t) - \lambda_{*}\bigr\rangle dt + C_{5} \int_{t_{0}}^{r} t^{\alpha} dt. \label{eq: double integrals}
    \end{align}
    We now recall some important facts. First of all, we have
    \begin{equation}\label{eq: fact 1}
        \int_{t_{0}}^{r}\frac{1}{t^{\alpha}} dt \leq \frac{1}{(\alpha - 1)t_{0}^{\alpha - 1}}.
    \end{equation}
    In addition, according to Lemma \ref{lem:lemma a2}, it holds
    \begin{equation}\label{eq: fact 2}
    \int_{t_{0}}^{r} \frac{1}{t^{\alpha}} \left(\int_{t_{0}}^{t} s^{\alpha} V(s) ds\right)dt \leq \frac{1}{\alpha - 1}\int_{t_{0}}^{r}t V(t) dt,
    \end{equation}
and
	\begin{align}\label{eq: fact 3}
		& \int_{t_{0}}^{r} \frac{1}{t^{\alpha}} \left(\int_{t_{0}}^{t} s^{\alpha} \Bigl[ \bigl(\theta(\alpha + 1) - 1\bigr)\delta(s) + \theta s \dot{\delta}(s)\Bigr] \left\lVert A^{*}(\lambda(s) - \lambda_{*}) \right\rVert^{2} ds\right) dt \nonumber \\
		\leq \ 	& \frac{1}{\alpha - 1}\int_{t_{0}}^{r} \Bigl[ \bigl(\theta(\alpha + 1) - 1\bigr)\delta(t) + \theta t \dot{\delta}(t)\Bigr] \left\lVert A^{*}(\lambda(t) - \lambda_{*}) \right\rVert^{2} dt,
	\end{align}
respectively.
    \noindent Finally, integrating by parts leads to
    \begin{align}
        &- \int_{t_{0}}^{r} \bigl\langle A \dot{x}(t), \lambda(t) - \lambda_{*}\bigr\rangle dt \nonumber\\
        = \     & - \bigl\langle Ax(r) - b, \lambda(r) - \lambda_{*}\bigr\rangle + \bigl\langle Ax(t_{0}) - b, \lambda(t_{0}) - \lambda_{*}\bigr\rangle + \int_{t_{0}}^{r} \bigl\langle Ax(t) - b, \dot{\lambda}(t)\bigr\rangle dt \nonumber\\
        \leq \  & \|Ax(r) - b\|\| \lambda(r) - \lambda_{*}\| + \left\lVert Ax(t_{0}) - b \right\rVert \|\lambda(t_{0}) - \lambda_{*}\| + \int_{t_{0}}^{r} \bigl\langle Ax(t) - b, \dot{\lambda}(t)\bigr\rangle dt \nonumber\\
        \leq \  & \sup_{t\geq t_{0}} \{\left\lVert Ax \left( t \right) - b \right\rVert\left\lVert \lambda \left( t \right) - \lambda_{*} \right\rVert\} + \left\lVert Ax(t_{0}) - b \right\rVert\| \lambda(t_{0} - \lambda_{*})\| \nonumber\\
        &\quad + \frac{1}{2}\int_{t_{0}}^{r} \bigl( \left\lVert Ax \left( t \right) - b \right\rVert^{2} + \left\lVert \dot{\lambda} \left( t \right) \right\rVert^{2}\bigr) dt. \label{eq: fact 4}
    \end{align}
    The supremum term is finite due to the boundedness of the trajectory. Now, by using the nonnegativity of $\varphi$ and the facts \eqref{eq: fact 1}, \eqref{eq: fact 2}, \eqref{eq: fact 3} and \eqref{eq: fact 4} on \eqref{eq: double integrals}, we come to 
    \begin{align}
        &\dfrac{\theta}{\alpha - 1} \int_{t_{0}}^{r}t \sigma(t) \left\lVert A^{*} \left( \lambda \left( t \right) - \lambda_{*} \right) \right\rVert^{2} dt \nonumber\\
        = \ 	& \int_{t_{0}}^{r} \left[ \theta \delta(t) - \frac{\bigl(\theta(\alpha + 1) - 1\bigr)\delta(t) + \theta t \dot{\delta}(t)}{\alpha - 1}\right] t \left\lVert A^{*} \left( \lambda \left( t \right) - \lambda_{*} \right) \right\rVert^{2} dt \nonumber\\
        \leq \ 	& \frac{1}{\alpha - 1}\int_{t_{0}}^{r} t V(t) dt + \int_{t_{0}}^{r} t \left( \left\lVert Ax \left( t \right) - b \right\rVert^{2} + \left\lVert \dot{\lambda} \left( t \right) \right\rVert^{2} \right) dt + C_{6}, \label{eq: integral for dual trajectory}
    \end{align}
    where 
    \begin{align*}
        C_{6} := \varphi(t_{0}) + 2 \sup_{t\geq t_{0}} \left\lbrace \left\lVert Ax \left( t \right) - b \right\rVert\left\lVert \lambda \left( t \right) - \lambda_{*} \right\rVert \right\rbrace + 2 \left\lVert Ax(t_{0}) - b \right\rVert\| \lambda(t_{0} - \lambda_{*})\| + \frac{C_{5}}{(\alpha - 1)t_{0}^{\alpha - 1}}.
    \end{align*}
    According to \eqref{eq:int estimate 2} and \eqref{eq:int estimate 3} in Theorem \ref{thm: integral estimates}, as well as Lemma \ref{lem: first condition of opial lemma}, we know that the mappings $t \mapsto t V(t)$ and $t \mapsto t \left( \left\lVert Ax \left( t \right) - b \right\rVert^{2} + \left\lVert \dot{\lambda} \left( t \right) \right\rVert \right)$ belong to $\sL^{1}\left[ t_{0}, +\infty \right)$. Therefore, by taking the limit as $r \to +\infty$ in \eqref{eq: integral for dual trajectory} we obtain
    \[
    \int_{t_{0}}^{+\infty} t \sigma(t) \left\lVert A^{*} \left( \lambda \left( t \right) - \lambda_{*} \right) \right\rVert^{2} dt < +\infty.
    \]
    Again, from \eqref{eq:inequality for delta and sigma} we conclude that
    \[
    \int_{t_{0}}^{+\infty} t \delta(t) \left\lVert A^{*} \left( \lambda \left( t \right) - \lambda_{*} \right) \right\rVert^{2} dt \leq \frac{1}{C_{2}} \int_{t_{0}}^{+\infty} t \sigma(t) \left\lVert A^{*} \left( \lambda \left( t \right) - \lambda_{*} \right) \right\rVert^{2} dt < +\infty,
    \]
    which completes the proof. 
\end{proof}
The following result is the final step towards the second condition of Opial's Lemma. 

\begin{thm}\label{thm:rates for gradient and dual variable}
    Let $(x, \lambda) : \left[ t_{0}, +\infty \right) \to \mathcal{X} \times \mathcal{Y}$ be a solution to \eqref{ds:timerescaled} and $(x_{*}, \lambda_{*}) \in \mathbb{S}$. Then it holds 
     \begin{equation} \label{eq:rates for gradient and dual variable}
        \left\lVert \nabla f \left( x \left( t \right) \right) - \nabla f \left( x_{*} \right) \right\rVert = o \left(\frac{1}{\sqrt{t} \sqrt[4]{\delta(t)}}\right) \textrm{ and } \left\lVert A^{*} \left( \lambda \left( t \right) - \lambda_{*} \right) \right\rVert = o \left(\frac{1}{\sqrt{t} \sqrt[4]{\delta(t)}}\right) \textrm{ as } t \to +\infty.
    \end{equation}
    Consequently, 
    \[
    \left\lVert \nabla_{x} \Lag \bigl( x \left( t \right) , \lambda \left( t \right) \bigr) \right\rVert  = \left\lVert \nabla f \left( x \left( t \right) \right) + A^{*} \lambda \left( t \right) \right\rVert = o\left(\frac{1}{\sqrt{t} \sqrt[4]{\delta(t)}}\right) \quad \textrm{as} \quad t\to +\infty, 
    \]
    while, as seen earlier, 
    \[
    \left\lVert \nabla_{\lambda} \Lag \bigl( x \left( t \right) , \lambda \left( t \right) \bigr) \right\rVert = \left\lVert Ax \left( t \right) - b \right\rVert =  \mathcal{O}\left(\frac{1}{t^{2} \delta(t)}\right) \quad \textrm{as} \quad t\to +\infty.
    \]
\end{thm}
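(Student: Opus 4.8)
The plan is to collapse both little-$o$ statements into a single scalar limit. Writing
$h(t) := \left\lVert \nabla f(x(t)) - \nabla f(x_{*}) \right\rVert^{2} + \left\lVert A^{*}(\lambda(t) - \lambda_{*}) \right\rVert^{2} \geq 0$,
the two rates in \eqref{eq:rates for gradient and dual variable} are, since each summand is nonnegative, jointly equivalent to $t\sqrt{\delta(t)}\,h(t) \to 0$ as $t\to+\infty$ (note $\sqrt{t\sqrt{\delta(t)}} = \sqrt{t}\sqrt[4]{\delta(t)}$). I would therefore set $E(t) := t\sqrt{\delta(t)}\,h(t)$ and prove $\lim_{t\to+\infty}E(t) = 0$ via the classical principle that a nonnegative, locally absolutely continuous function which lies in $\sL^{1}\left[ t_{0}, +\infty \right)$ and whose derivative is bounded above by an $\sL^{1}$ function must vanish at infinity; I would invoke the corresponding appendix lemma rather than reprove it. Since $x,\lambda \in C^{2}$ and $\nabla f$ is $\ell$-Lipschitz, $t\mapsto\nabla f(x(t))$ is locally Lipschitz, so $E$ is locally absolutely continuous and the argument can be run with a.e. derivatives.

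First I would verify $E \in \sL^{1}\left[ t_{0}, +\infty \right)$. Because $\delta$ is nondecreasing by Assumption \ref{assume:para2}, $\sqrt{\delta(t)} \geq \sqrt{\delta(t_{0})} > 0$, hence $E(t) = \tfrac{1}{\sqrt{\delta(t)}}\,t\delta(t)h(t) \leq \tfrac{1}{\sqrt{\delta(t_{0})}}\,t\delta(t)h(t)$, and the right-hand side is integrable by \eqref{eq:integral for gradient} and \eqref{eq:integral for dual variable}.

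The crux, and the main obstacle, is controlling $\dot{E}$ from above by an integrable function; this step is also what dictates the exponent $\sqrt[4]{\delta}$. Differentiating gives $\dot{E}(t) = \bigl(\sqrt{\delta(t)} + \tfrac{t\dot{\delta}(t)}{2\sqrt{\delta(t)}}\bigr)h(t) + t\sqrt{\delta(t)}\,\dot{h}(t)$. The first group is nonnegative and, using $t\dot{\delta}(t) \leq \tfrac{1-2\theta}{\theta}\delta(t)$ from Assumption \ref{assume:para2}, is bounded by a constant times $\sqrt{\delta(t)}h(t) \leq \tfrac{1}{t_{0}\sqrt{\delta(t_{0})}}\,t\delta(t)h(t) \in \sL^{1}$. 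For the delicate term $t\sqrt{\delta(t)}\dot{h}(t)$ I would use the Lipschitz continuity of $\nabla f$ to bound $\bigl\lVert\tfrac{d}{dt}\nabla f(x(t))\bigr\rVert \leq \ell\lVert\dot{x}(t)\rVert$ a.e., whence $[\dot{h}]_{+} \leq 2\ell\lVert\nabla f(x(t)) - \nabla f(x_{*})\rVert\,\lVert\dot{x}(t)\rVert + 2\lVert A\rVert\,\lVert A^{*}(\lambda(t)-\lambda_{*})\rVert\,\lVert\dot{\lambda}(t)\rVert$. The decisive move is an \emph{asymmetric} Young split that loads the whole weight $\delta$ onto the gradient/dual factor and keeps the velocity factor $\delta$-free:
\[
t\sqrt{\delta}\,\lVert\nabla f(x(t)) - \nabla f(x_{*})\rVert\,\lVert\dot{x}(t)\rVert = \bigl(\sqrt{t\delta}\,\lVert\nabla f(x(t)) - \nabla f(x_{*})\rVert\bigr)\bigl(\sqrt{t}\,\lVert\dot{x}(t)\rVert\bigr) \leq \tfrac12\bigl(t\delta\,\lVert\nabla f(x(t)) - \nabla f(x_{*})\rVert^{2} + t\lVert\dot{x}(t)\rVert^{2}\bigr),
\]
and symmetrically for the dual product. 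The $\delta$-weighted pieces are integrable by \eqref{eq:integral for gradient}–\eqref{eq:integral for dual variable}, the velocity pieces by \eqref{eq:int estimate 3} (here $\xi>0$ under Assumption \ref{assume:para2}). Hence $[\dot{E}]_{+} \in \sL^{1}\left[ t_{0}, +\infty \right)$. This balancing is exactly why only the weight $\sqrt{\delta}$ (not $\delta$) survives on $h$: the velocity estimate \eqref{eq:int estimate 3} carries no factor of $\delta$, so at most half of $\delta$ may be transferred onto the gradient term, producing the rate $o\bigl(1/(\sqrt{t}\sqrt[4]{\delta})\bigr)$ rather than $o\bigl(1/\sqrt{t\delta}\bigr)$.

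With $E \geq 0$, $E \in \sL^{1}$ and $[\dot{E}]_{+} \in \sL^{1}$, the cited limit lemma yields $E(t)\to 0$, which is precisely \eqref{eq:rates for gradient and dual variable}. The two consequences are then immediate: the optimality condition $\nabla f(x_{*}) + A^{*}\lambda_{*} = 0$ from \eqref{intro:opt-Lag} gives $\nabla_{x}\Lag\bigl(x(t),\lambda(t)\bigr) = \nabla f(x(t)) + A^{*}\lambda(t) = \bigl(\nabla f(x(t)) - \nabla f(x_{*})\bigr) + A^{*}\bigl(\lambda(t) - \lambda_{*}\bigr)$, so the triangle inequality transports the $o\bigl(1/(\sqrt{t}\sqrt[4]{\delta})\bigr)$ rate; while $\nabla_{\lambda}\Lag\bigl(x(t),\lambda(t)\bigr) = Ax(t) - b$ already obeys the rate $\mathcal{O}\bigl(1/(t^{2}\delta(t))\bigr)$ by \eqref{eq:feasibility condition}.
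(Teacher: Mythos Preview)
Your proposal is correct and follows essentially the same route as the paper: both arguments differentiate the weighted quantity $t\sqrt{\delta(t)}\,(\cdot)$, bound the coefficient $\sqrt{\delta(t)}+\tfrac{t\dot\delta(t)}{2\sqrt{\delta(t)}}$ via Assumption~\ref{assume:para2}, apply the same asymmetric Young split $t\sqrt{\delta}\,ab \le \tfrac12(t\delta\,a^2+tb^2)$ to load $\delta$ onto the gradient/dual factor, and then invoke Lemma~\ref{lem:lemma a3} using \eqref{eq:integral for gradient}, \eqref{eq:integral for dual variable} and \eqref{eq:int estimate 3}. The only cosmetic differences are that the paper treats the gradient and dual terms in two separate passes rather than bundling them into a single $h$, and that the paper introduces an auxiliary time $t_1$ to obtain $\sqrt{\delta(t)}\le t\delta(t)$ whereas you reach the equivalent bound directly via $\sqrt{\delta(t)}\le \delta(t)/\sqrt{\delta(t_0)}$ and $1\le t/t_0$.
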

\begin{proof}
    We first show the gradient rate. For $t\geq t_{0}$, it holds 
    \begin{align}
        \frac{d}{dt}\Bigl(t \sqrt{\delta(t)} \left\lVert \nabla f \left( x \left( t \right) \right) - \nabla f \left( x_{*} \right) \right\rVert^{2}\Bigr) 
        &= \left( \sqrt{\delta(t)} + \frac{t\dot{\delta}(t)}{2\sqrt{\delta(t)}}\right) \left\lVert \nabla f \left( x \left( t \right) \right) - \nabla f \left( x_{*} \right) \right\rVert^{2} \nonumber\\ 
        & \quad + 2 t \sqrt{\delta(t)} \left\langle \nabla f(x(t)) - \nabla f(x_{*}), \frac{d}{dt}\nabla f(x(t))\right\rangle. \label{eq: improved rate 0}
    \end{align}
    On the one hand, by Assumption \ref{assume:para2}, we can write 
    \begin{equation}\label{eq: improved rate 1}
        \left( \sqrt{\delta(t)} + \frac{t\dot{\delta}(t)}{2\sqrt{\delta(t)}}\right) = \left( \sqrt{\delta(t)} + \frac{\sqrt{\delta(t)}}{2} \cdot \frac{t\dot{\delta}(t)}{\delta(t)}\right) \leq \left(1 + \frac{1 - 2\theta}{2\theta}\right) \sqrt{\delta(t)} = \frac{1}{2\theta} \sqrt{\delta(t)}. 
    \end{equation}
    Since $\delta$ is nondecreasing, for $t \geq t_{0}$ we have $\sqrt{\delta(t)} \geq \sqrt{\delta(t_{0})} > 0$. Set $t_{1} := \max\left\{t_{0}, \frac{1}{\sqrt{t_{0}}}\right\}$. Therefore, for $t \geq t_{1}$ it holds
    \[
    \frac{1}{\sqrt{\delta(t)}} \leq \frac{1}{\sqrt{\delta(t_{0})}} = t_{1} \leq t
    \]
    and thus
    \begin{equation}\label{eq: improved rate 2}
    \sqrt{\delta(t)} \leq t \delta(t).
    \end{equation}
    On the other hand, for every $t \geq t_{1}$ we deduce
    \begin{align}
        2 t \sqrt{\delta(t)} \left\langle \nabla f(x(t)) - \nabla f(x_{*}), \frac{d}{dt}\nabla f(x(t))\right\rangle 
        &= 2 t \left\langle \sqrt{\delta(t)} \bigl[\nabla f(x(t)) - \nabla f(x_{*})\bigr], \frac{d}{dt}\nabla f(x(t))\right\rangle \nonumber\\
        &\leq t\delta(t) \left\lVert \nabla f \left( x \left( t \right) \right) - \nabla f \left( x_{*} \right) \right\rVert^{2} + t\left\|\frac{d}{dt}\nabla f(x(t))\right\|^{2} \nonumber\\
        &\leq t\delta(t) \left\lVert \nabla f \left( x \left( t \right) \right) - \nabla f \left( x_{*} \right) \right\rVert^{2} + \ell^{2} t \bigl\| \dot{x}(t)\bigr\|^{2} \label{eq: improved rate 3},
    \end{align}
    where the last inequality is a consequence of the $\ell$-Lipschitz continuity of $\nabla f$.
    By combining \eqref{eq: improved rate 1}, \eqref{eq: improved rate 2} and \eqref{eq: improved rate 3}, from \eqref{eq: improved rate 0} we assert that for every $t\geq t_{1}$
    \begin{equation*}
        \frac{d}{dt}\Bigl(t \sqrt{\delta(t)} \left\lVert \nabla f \left( x \left( t \right) \right) - \nabla f \left( x_{*} \right) \right\rVert^{2}\Bigr) \leq \left( 1 + \frac{1}{2\theta} \right) t \delta(t) \left\lVert \nabla f \left( x \left( t \right) \right) - \nabla f \left( x_{*} \right) \right\rVert^{2} + \ell^{2} t \bigl\| \dot{x}(t)\bigr\|^{2} .
    \end{equation*}
    According to \eqref{eq:int estimate 3} and \eqref{eq:integral for gradient}, the right hand side of the previous inequality belongs to $\sL^{1} [t_{1}, +\infty)$. Since $\delta$ is nondecreasing, for every $t \geq t_{1}$ we have 
    \[
        \sqrt{\delta(t)} = \sqrt{\delta(t)} \cdot \frac{\sqrt{\delta(t)}}{\sqrt{\delta(t)}} \leq \frac{\delta(t)}{\sqrt{\delta(t_{0})}}, 
    \]
    so 
    \begin{equation}\label{eq:inequality for applying lemma a3}
        \int_{t_{1}}^{+\infty} t \sqrt{\delta(t)} \left\lVert \nabla f \left( x \left( t \right) \right) - \nabla f \left( x_{*} \right) \right\rVert^{2} dt \leq \frac{1}{\sqrt{\delta(t_{0})}} \int_{t_{1}}^{+\infty} t \delta(t) \left\lVert \nabla f \left( x \left( t \right) \right) - \nabla f \left( x_{*} \right) \right\rVert^{2} dt < +\infty,
    \end{equation}
    i.e., the function being differentiated also belongs to $\sL^{1} [t_{1}, +\infty)$. Therefore, Lemma \ref{lem:lemma a3} gives us
    \[
    t \sqrt{\delta(t)} \left\lVert \nabla f \left( x \left( t \right) \right) - \nabla f \left( x_{*} \right) \right\rVert^{2} \to 0 \quad \text{as} \quad t\to +\infty. 
    \]
    Proceeding in the exact same way, for every $t\geq t_{1}$ we have 
    \begin{align*}
        &\frac{d}{dt} \Bigl( t \sqrt{\delta(t)} \bigl\|A^{*}(\lambda(t) - \lambda_{*})\bigr\|^{2}\Bigr) \\
        = \     & \left( \sqrt{\delta(t)} + \frac{t \dot{\delta}(t)}{2\sqrt{\delta(t)}}\right) \bigl\|A^{*}(\lambda(t) - \lambda_{*})\bigr\|^{2} + 2 t \sqrt{\delta(t)} \bigl\langle A A^{*}(\lambda(t) - \lambda_{*}), \dot{\lambda}(t)\bigr\rangle \\
        \leq \  & \left( \frac{1}{2\theta} + \|A\|^{2} \right) t \delta(t) \bigl\|A^{*}(\lambda(t) - \lambda_{*})\bigr\|^{2} + t \left\lVert \dot{\lambda} \left( t \right) \right\rVert^{2}.
    \end{align*}
    According to \eqref{eq:int estimate 3} and \eqref{eq:integral for dual variable}, the right hand side of the previous inequality belongs to $\sL^{1}[t_{1}, +\infty)$. Arguing as in \eqref{eq:inequality for applying lemma a3}, we deduce that the function being differentiated also belongs to $\sL^{1}[t_{1}, +\infty)$. Again applying Lemma \ref{lem:lemma a3}, we come to 
    \[
        t \sqrt{\delta(t)} \bigl\|A^{*}(\lambda(t) - \lambda_{*})\bigr\|^{2} \to 0 \quad \text{as} \quad t\to +\infty. 
    \]
    
    Finally, recalling that $A^{*}\lambda_{*} = -\nabla f(x_{*})$, we deduce from the triangle inequality that
    \begin{align*}
    \left\lVert \nabla_{x} \Lag \bigl( x \left( t \right) , \lambda \left( t \right) \bigr) \right\rVert  &= \left\lVert \nabla f \left( x \left( t \right) \right) + A^{*} \lambda \left( t \right) \right\rVert \\ 
    &\leq \left\lVert \nabla f \left( x \left( t \right) \right) - \nabla f \left( x_{*} \right) \right\rVert + \bigl\|A^{*}(\lambda(t) - \lambda_{*})\bigr\| \nonumber \\
    & = o\left(\frac{1}{\sqrt{t} \sqrt[4]{\delta(t)}}\right) \quad \textrm{as} \quad t\to +\infty,
    \end{align*}
    and the third claim follows.
\end{proof}
\begin{rmk}
    The previous theorem also has its own interest. It tells us that the time rescaling parameter also plays a role in accelerating the rates of convergence for $\left\lVert \nabla f \left( x \left( t \right) \right) - \nabla f \left( x_{*} \right) \right\rVert$ and $\|A^{*}(\lambda(t) - \lambda_{*})\|$ as $t\to +\infty$. Moreover, we deduce from \eqref{eq:rates for gradient and dual variable} that the mapping $(x, \lambda) \mapsto (\nabla f(x), A^{*}\lambda)$ is constant along $\mathbb{S}$, as reported in \cite[Proposition A.4]{BotNguyen}.
\end{rmk}

We now come to the final step and show weak convergence of the trajectories of \eqref{ds:timerescaled} to elements of $\mathbb{S}$.
\begin{thm}
	\label{thm:conv}
     Let $(x, \lambda) : \left[ t_{0}, +\infty \right) \to \mathcal{X} \times \mathcal{Y}$ be a solution to \eqref{ds:timerescaled} and $\left( x_{*} , \lambda_{*} \right) \in \sol$. Then $\bigl(x(t), \lambda(t)\bigr)$ converges weakly to a primal-dual solution of \eqref{intro:pb} as $t\to +\infty$.
\end{thm}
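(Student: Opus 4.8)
The plan is to apply the continuous-time version of Opial's Lemma to the trajectory $z(t) := \bigl(x(t), \lambda(t)\bigr)$, showing it converges weakly to some element of $\mathbb{S}$. Two ingredients are required: first, for every $z_{*} = (x_{*}, \lambda_{*}) \in \mathbb{S}$ the limit $\lim_{t\to+\infty}\lVert z(t) - z_{*}\rVert$ exists; and second, every weak sequential cluster point of $z(t)$ lies in $\mathbb{S}$. The trajectory is bounded by Theorem \ref{thm: integral estimates}(ii), so weak cluster points indeed exist.

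The first condition is immediate from the preceding work: by Lemma \ref{lem: first condition of opial lemma} the limit $\lim_{t\to+\infty}\varphi(t)$ exists, and since $\varphi(t) = \tfrac{1}{2}\lVert z(t) - z_{*}\rVert^{2}$, so does $\lim_{t\to+\infty}\lVert z(t) - z_{*}\rVert$ for each fixed $z_{*} \in \mathbb{S}$.

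For the second condition, I would fix a sequence $t_{n} \to +\infty$ with $z(t_{n}) \rightharpoonup (\hat{x}, \hat{\lambda})$ and show $(\hat{x}, \hat{\lambda}) \in \mathbb{S}$. The crucial observation is that the convergence rates already obtained can be repackaged in terms of the operator $\TL$. Indeed, Theorem \ref{thm:rates for gradient and dual variable} gives $\lVert \nabla f(x(t)) + A^{*}\lambda(t)\rVert \to 0$, while \eqref{eq:feasibility condition} gives $\lVert Ax(t) - b\rVert \to 0$; together these say precisely that
\[
    \TL\bigl(z(t_{n})\bigr) = \begin{pmatrix} \nabla f(x(t_{n})) + A^{*}\lambda(t_{n}) \\ b - Ax(t_{n}) \end{pmatrix} \longrightarrow (0,0) \quad \text{strongly as } n \to +\infty.
\]
Since $\TL$ is maximally monotone, as recorded around \eqref{moninclusion}, its graph is sequentially closed for the weak--strong topology on $(\sX\times\sY)\times(\sX\times\sY)$. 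Combining $z(t_{n}) \rightharpoonup (\hat{x}, \hat{\lambda})$ with the strong limit above therefore yields $\TL(\hat{x}, \hat{\lambda}) = 0$, i.e. $(\hat{x}, \hat{\lambda}) \in \mathbb{S}$ by \eqref{moninclusion}. With both conditions verified, Opial's Lemma furnishes the weak convergence of $z(t)$ to a single primal-dual optimal solution.

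The hard part is this second condition, and specifically the step that extracts an optimality condition from the weak limit. One cannot simply pass $\nabla f(x(t_{n})) \rightharpoonup \nabla f(\hat{x})$, since weak convergence of the argument is not transmitted through $\nabla f$ in general. The resolution is to assemble the two strong residual limits into the single relation $\TL(z(t_{n})) \to 0$ and invoke the demiclosedness of the maximally monotone operator $\TL$; this is exactly the purpose served by the gradient and dual-residual rates of Theorem \ref{thm:rates for gradient and dual variable}.
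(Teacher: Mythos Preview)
Your proposal is correct and follows essentially the same approach as the paper: both verify Opial's two conditions, the first via Lemma~\ref{lem: first condition of opial lemma}, and the second by showing $\TL\bigl(x(t_n),\lambda(t_n)\bigr)\to 0$ strongly (from Theorem~\ref{thm:rates for gradient and dual variable} and \eqref{eq:feasibility condition}) and invoking the weak--strong sequential closedness of the graph of the maximally monotone operator $\TL$. The only cosmetic difference is that the paper records the three residual limits separately before combining them via $A^{*}\lambda_{*}=-\nabla f(x_{*})$, whereas you invoke the already-combined consequence of Theorem~\ref{thm:rates for gradient and dual variable} directly.
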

\begin{proof}
    For proving this theorem, we make use of Opial's Lemma (see Lemma \ref{lem:lemma a5}). Lemma \ref{lem: first condition of opial lemma} tells us that $\lim_{t\to +\infty}\left\|\bigl(x(t), \lambda(t)\bigr) - (x_{*}, \lambda_{*})\right\|$ exists for every $(x_{*}, \lambda_{*}) \in \mathbb{S}$, which proves condition (i) of Opial's Lemma.    
    
    In order to show condition (ii), we recall the operator $\mathcal{T}_{\mathcal{L}}$ defined in \eqref{moninclusion} by 
    \[
        \TL \left( x , \lambda \right)
        = 
        \begin{pmatrix}
            \nabla f \left( x \right) + A^{*} \lambda \\ b-Ax
        \end{pmatrix}
        \quad 
        \forall (x, \lambda) \in \mathcal{X} \times \mathcal{Y}.
    \]
    Fix $(x_{*}, \lambda_{*}) \in \mathbb{S}$ (in other words, $\mathcal{T}_{\mathcal{L}}(x_{*}, \lambda_{*}) = 0$) and take $(\tilde{x}, \tilde{\lambda})$ any weak sequential cluster point of $\bigl( x(t), \lambda(t)\bigr)$ as $t\to +\infty$, which means there exists a strictly increasing sequence $(t_{n})_{n\in \mathbb{N}} \subseteq [t_{0}, +\infty)$ such that 
    \[
        \bigl( x(t_{n}), \lambda(t_{n})\bigr) \rightharpoonup (\tilde{x}, \tilde{\lambda}) \quad \text{as} \quad n \to +\infty.       
    \]
    Given that $\delta$ is nondecreasing on $\left[ t_{0}, +\infty \right)$, from Theorem \ref{thm:rates for gradient and dual variable} and \eqref{eq:feasibility condition} we deduce that 
    \begin{align*}
        \nabla f(x(t_{n})) - \nabla f(x_{*}) &\to 0, \\
        A^{*}\lambda(x(t_{n})) - A^{*}\lambda_{*} &\to 0, \\
        Ax(t_{n}) - b &\to 0
    \end{align*}
    as $n\to +\infty$. Since $A^{*}\lambda_{*} = -\nabla f(x_{*})$, the previous three statements imply
    \[
        \mathcal{T}_{\mathcal{L}}(x(t_{n}), \lambda(t_{n})) \to 0 \quad \text{as} \quad n \to +\infty.
    \]
    Since we already had $(x(t_{n}), \lambda(t_{n})) \rightharpoonup (\tilde{x}, \tilde{\lambda})$ as $n\to +\infty$, and the graph of $\mathcal{T}_{\mathcal{L}}$ is sequentially closed in $(\mathcal{X} \times \mathcal{Y})^{\text{weak}} \times (\mathcal{X} \times \mathcal{Y})^{\text{strong}}$ (see \cite[Proposition 20.38(ii)]{Bauschke-Combettes:book}), we finally come to 
    \[
        \mathcal{T}_{\mathcal{L}}(\tilde{x}, \tilde{\lambda}) = 0, 
    \]
    which is to say, $(\tilde{x}, \tilde{\lambda}) \in \mathbb{S}$. The proof is thus concluded. 
\end{proof}

\begin{rmk}
	In case $A := 0$ and $b := 0$, the optimization problem \eqref{intro:pb} reduces to the unconstrained optimization problem
	\begin{equation}
		\label{intro:pb-un}
		\min\limits_{x \in \sX} f \left( x \right) .
	\end{equation}
	
	Indeed, the system of optimality conditions \eqref{intro:opt-Lag} read in this case
	\begin{equation*}
		\left( x_{*} , \lambda_{*} \right) \in \sol
		\Leftrightarrow \nabla f \left( x_{*} \right) = 0 \textrm{ and } \lambda_{*} \in \sY,
	\end{equation*}
	in particular, $x_{*} \in \sX$ is an optimal solution of \eqref{intro:pb-un} if and only if $\nabla f \left( x_{*} \right) = 0$.
	The system \eqref{ds:timerescaled} becomes
	\begin{equation*}
		\begin{dcases}
			\ddot{x} \left( t \right) + \dfrac{\alpha}{t} \dot{x} \left( t \right) + \delta \left( t \right) \nabla f \left( x \left( t \right) \right)    		& = 0 \\
			\ddot{\lambda} \left( t \right) + \dfrac{\alpha}{t} \dot{\lambda} \left( t \right)	& = 0 \\
			\Bigl( x \left( t_{0} \right) , \lambda \left( t_{0} \right) \Bigr) 			= \Bigl( x_{0} , \lambda_{0} \Bigr) \textrm{ and }
			\Bigl( \dot{x} \left( t_{0} \right) , \dot{\lambda} \left( t_{0} \right) \Bigr) = \Bigl( \dot{x}_{0} , \dot{\lambda}_{0} \Bigr)
		\end{dcases}.
	\end{equation*}
	The dynamical system in $x$ is reads
	\begin{equation*}
		\begin{dcases}
			\ddot{x} \left( t \right) + \dfrac{\alpha}{t} \dot{x} \left( t \right) + \nabla f \left( x \left( t \right) \right)   = 0 \\
			x(t_0) = x_0 \ \textrm{ and } \dot{x}(t_0) = \dot{x}_0
		\end{dcases},
	\end{equation*}
	for $\alpha \geq 3$, and is nothing else than Nesterov's accelerated gradient system. The trajectory generated by the system in $\lambda$ is $\lambda(t) = \frac{\dot{\lambda}_0 t_0^\alpha}{1-\alpha} t^{1-\alpha} + \lambda_0 - \frac{\dot{\lambda}_0 t_0}{1-\alpha}$ for every $t \geq t_{0}$. The parameters $\theta$ and $\beta$ plays no role in the system. Therefore, the condition on $\delta \left( t \right)$ now becomes
	\begin{equation*}
		\sup_{t\geq t_{0}}\frac{t \dot{\delta}(t)}{\delta(t)} \leq \alpha - 3 ,
	\end{equation*}
which is exactly the one imposed in \cite{Attouch-Chbani-Riahi:SIOPT}, see also \cite{Attouch-Chbani-Riahi:HAL}.
	
	If $\alpha \geq 3$, then Theorem \ref{thm:rates of convergence} (iii) says that $f(x(t))$ converges to $f_*$ with a rate of convergence of $\bO \left( \frac{1}{t^{2} \delta \left( t \right)} \right)$ as $t \to + \infty$, which is the rate derived in \cite{Attouch-Chbani-Riahi:SIOPT,Attouch-Chbani-Riahi:HAL}.
	
	If $\alpha > 3$, then Theorem \ref{thm:conv} gives that the trajectory $x(t)$ converges weakly to an optimal solution of \eqref{intro:pb-un}, as $t \to + \infty$. Notice that no trajectory convergence has been reported in \cite{Attouch-Chbani-Riahi:SIOPT,Attouch-Chbani-Riahi:HAL}.
	
	Finally, we mention that the convergence of the trajectory in the critical case $\alpha = 3$ is still an open question, even for non time rescaling case (\cite{Attouch-Chbani-Peypouquet-Redont,Su-Boyd-Candes}), as it is the convergence of the iterates of the original Nesterov's acceleration algorithm (\cite{FISTA,Nesterov:83}).
\end{rmk}


\section{Numerical experiments}

We will illustate the theoretical results by two numerical examples, with $\mathcal{X} = \mathbb{R}^{4}$ and $\mathcal{Y} = \mathbb{R}^{2}$. We will address two minimization problems with linear constraints; one with a strongly convex objective function and another with a convex objective function which is not strongly convex. In both cases, the linear constraints are dictated by 
\[
    A = 
    \begin{bmatrix}
        1 & -1 & -1 & 0 \\
        0 & 1 & 0 & -1
    \end{bmatrix}
    \qquad 
    \text{and}
    \qquad 
    b = 
    \begin{bmatrix}
        0 \\
        0
    \end{bmatrix}.
\]
\begin{ex}
\label{ex:strong}
Consider the minimization problem 
\begin{equation*}
    \begin{array}{rl}
	\min & f(x_{1}, x_{2}, x_{3}, x_{4}) := (x_{1} - 1)^{2} + (x_{2} - 1)^{2} + x_{3}^{2} + x_{4}^{2} \\
	\textrm{subject to} 	& x_{1} - x_{2} - x_{3} = 0 \\
	& x_{2} - x_{4} = 0.
\end{array}
\end{equation*}
The optimality conditions can be calculated and lead to the following primal-dual solution pair
\[
x_{*} = 
\begin{bmatrix}
	0.8 \\
	0.6 \\
	0.2 \\
	0.6
\end{bmatrix}
\qquad
\text{and}
\qquad 
\lambda_{*} = 
\begin{bmatrix}
	0.4 \\
	1.2
\end{bmatrix}.
\]
\end{ex}

\begin{ex}
\label{ex:notstrong}
Consider the minimization problem
\begin{equation*}
    \begin{array}{rl}
	\min & f(x_{1}, x_{2}, x_{3}, x_{4}) := \log\left( 1 + e^{-x_{1} - x_{2}}\right) + x_{3}^{2} + x_{4}^{2} \\
	\textrm{subject to} 	& x_{1} - x_{2} - x_{3} = 0 \\
	& x_{2} - x_{4} = 0.
\end{array}
\end{equation*}
This problem is similar to the regularized logistic regression frequently used in machine learning.
We cannot explicitly calculate the optimality conditions as in the previous case; instead, we use the last solution in the numerical experiment as the approximate solution.
\end{ex}
To comply with Assumption \ref{assume:para2}, we choose $t_{0} > 0$, $\alpha = 8$, $\beta = 10$, $\theta = \frac{1}{6}$, and we test four different choices for the rescaling parameter: $\delta(t) = 1$ (i.e., the (PD-AVD) dynamics in \cite{Zeng-Lei-Chen,BotNguyen}), $\delta(t) = t$, $\delta(t) = t^{2}$ and $\delta(t) = t^{3}$. In both examples, the initial conditions are 
\[
    x(t_{0}) = 
    \begin{bmatrix}
        0.5 \\
        0.5 \\
        0.5 \\
        0.5
    \end{bmatrix}, 
    \quad 
    \lambda(t_{0}) = 
    \begin{bmatrix}
        0.2 \\
        0.2
    \end{bmatrix}, 
    \quad 
    \dot{x}(t_{0}) = 
    \begin{bmatrix}
        0.5 \\
        0.5 \\
        0.5 \\
        0.5
    \end{bmatrix}
    \quad 
    \text{and}
    \quad 
    \dot{\lambda}(t_{0}) = 
    \begin{bmatrix}
        0.5 \\
        0.5
    \end{bmatrix}.
\]
For each choice of $\delta$, we plot, using a logarithmic scale, the primal-dual gap $\mathcal{L}\bigl(x(t), \lambda_{*}\bigr) - \mathcal{L}\bigl(x_{*}, \lambda(t)\bigr)$, the feasibility measure $\left\lVert A x \left( t \right) - b \right\rVert$ and  the functional values $\left\lvert f \left( x \left( t \right) \right) - f_{*} \right\rvert$, to highlight the theoretical result in Theorem \ref{thm:rates of convergence}.
We also illustrate the findings from Theorem \ref{thm:rates for gradient and dual variable}, namely, we plot the quantities $\left\lVert \nabla f \left( x \left( t \right) \right) - \nabla f \left( x_{*} \right) \right\rVert$ and $\left\lVert A^{*} \left( \lambda \left( t \right) - \lambda_{*} \right) \right\rVert$, as well as the velocity $\Vert ( \dot{x} (t) , \dot{\lambda} (t) ) \Vert$. 

Figure \ref{fig:strong} and \ref{fig:notstrong} display these plots for Example \ref{ex:strong} and \ref{fig:notstrong}, respectively. 
As predicted by the theory, choosing faster-growing time rescaling parameters yields better convergence rates. This is not the case for the velocities.

\begin{figure}[!htb]	
	\minipage{0.33\textwidth}
	\includegraphics[width=\linewidth]{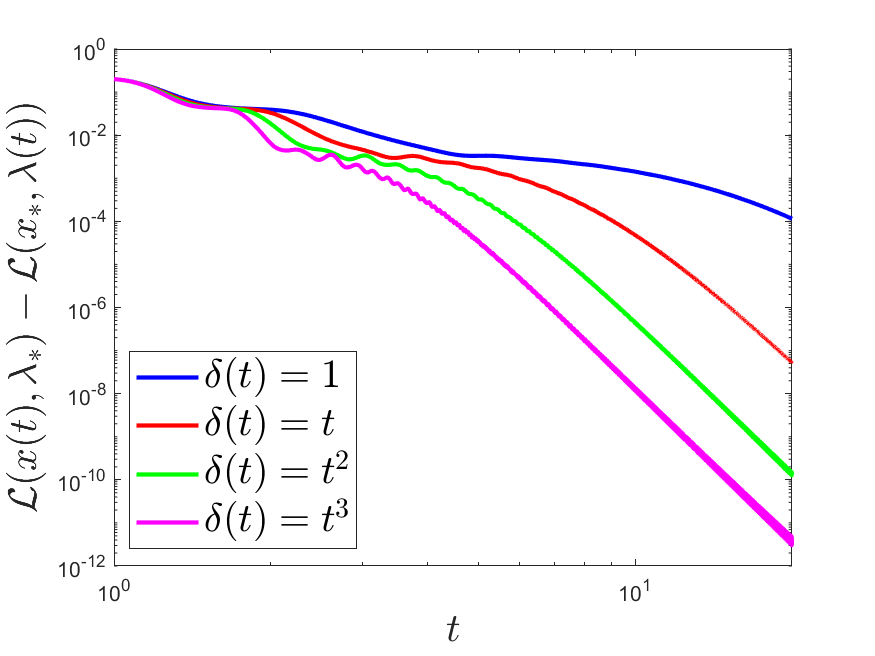}
	\endminipage\hfill
	\minipage{0.33\textwidth}
	\includegraphics[width=\linewidth]{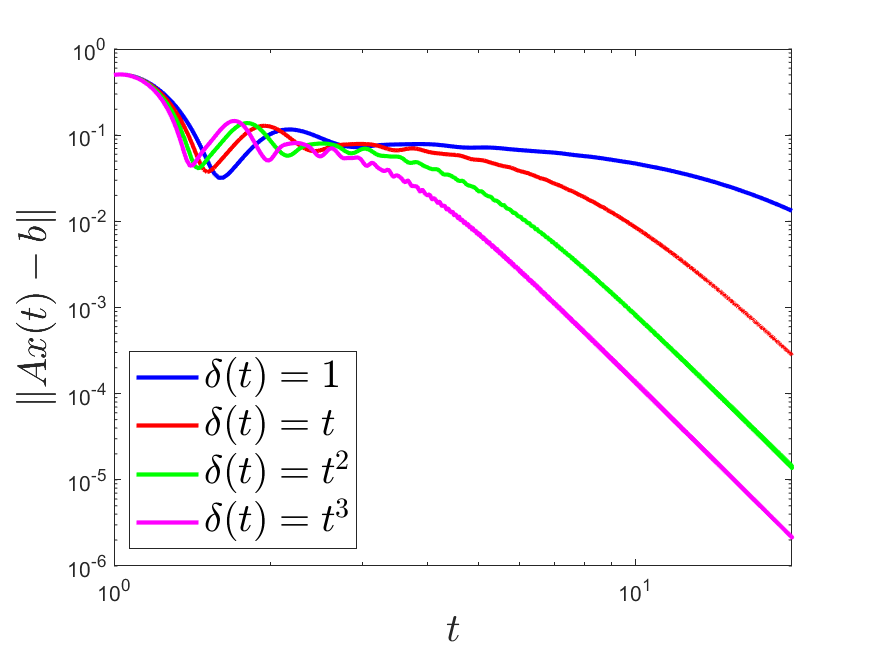}
	\endminipage\hfill
	\minipage{0.33\textwidth}
	\includegraphics[width=\linewidth]{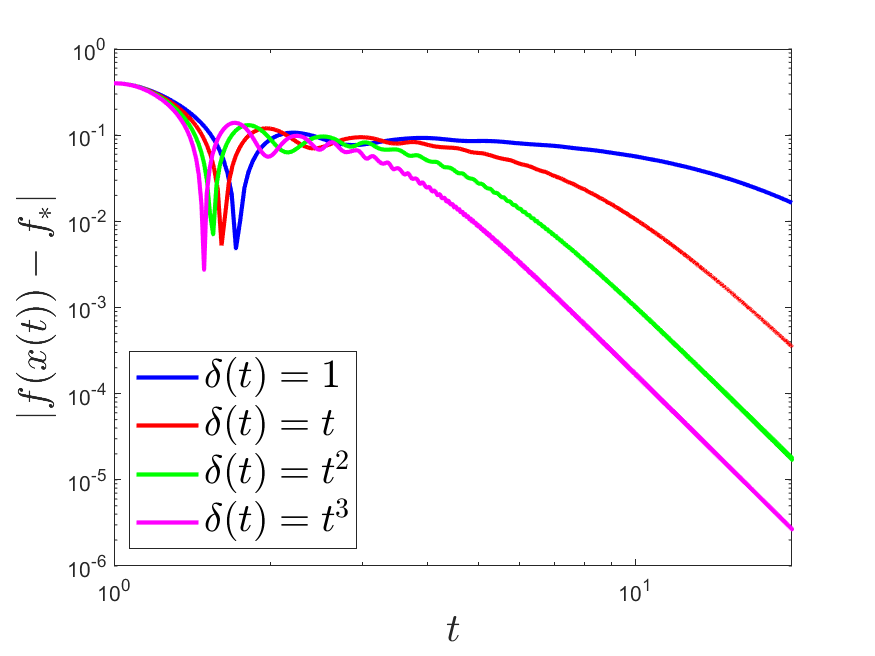}
	\endminipage\hfill
	\minipage{0.33\textwidth}
	\includegraphics[width=\linewidth]{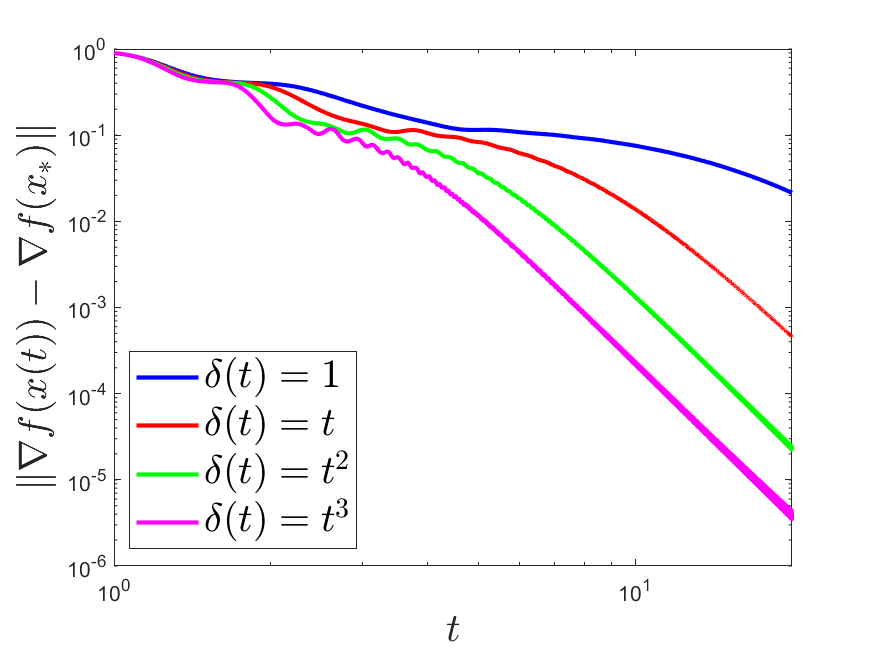}
	\endminipage\hfill
	\minipage{0.33\textwidth}
	\includegraphics[width=\linewidth]{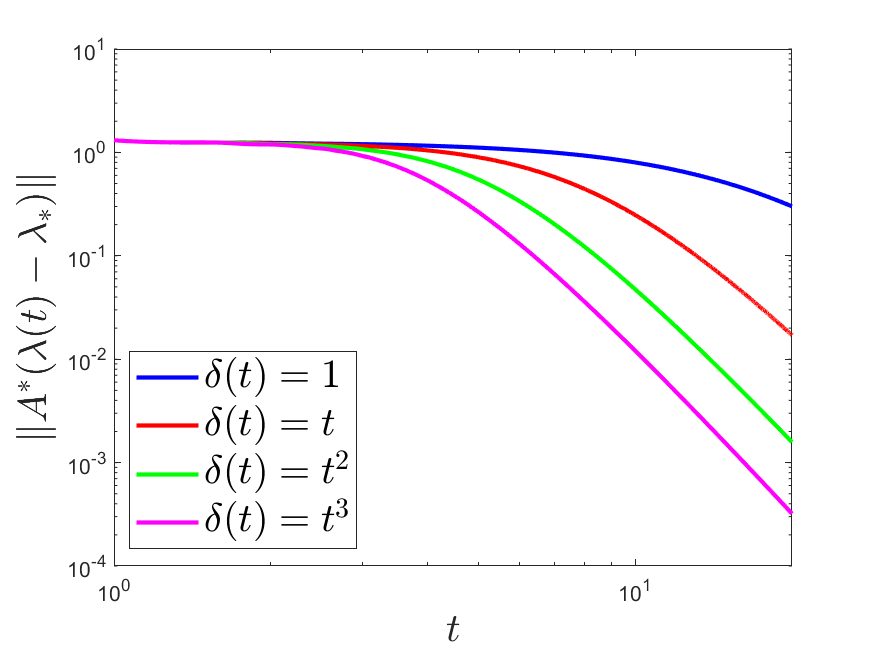}
	\endminipage\hfill
	\minipage{0.33\textwidth}
	\includegraphics[width=\linewidth]{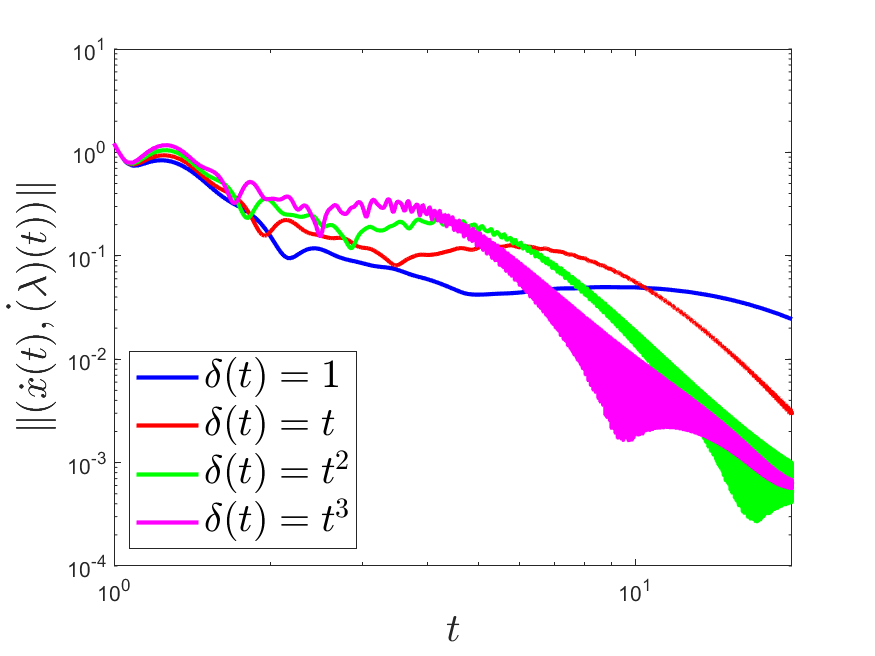}
	\endminipage	
	\caption{The function $\delta \left( t \right)$ influences convergence behaviour in Example \ref{ex:strong}}\label{fig:strong}	
\end{figure}

\begin{figure}[!htb]
	\minipage{0.33\textwidth}
	\includegraphics[width=\linewidth]{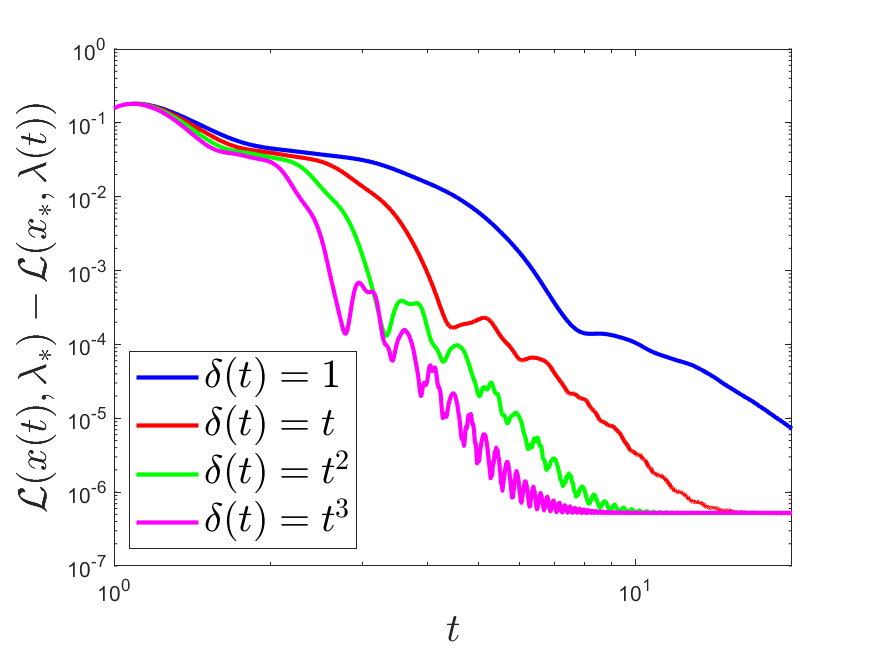}
	\endminipage\hfill
	\minipage{0.33\textwidth}
	\includegraphics[width=\linewidth]{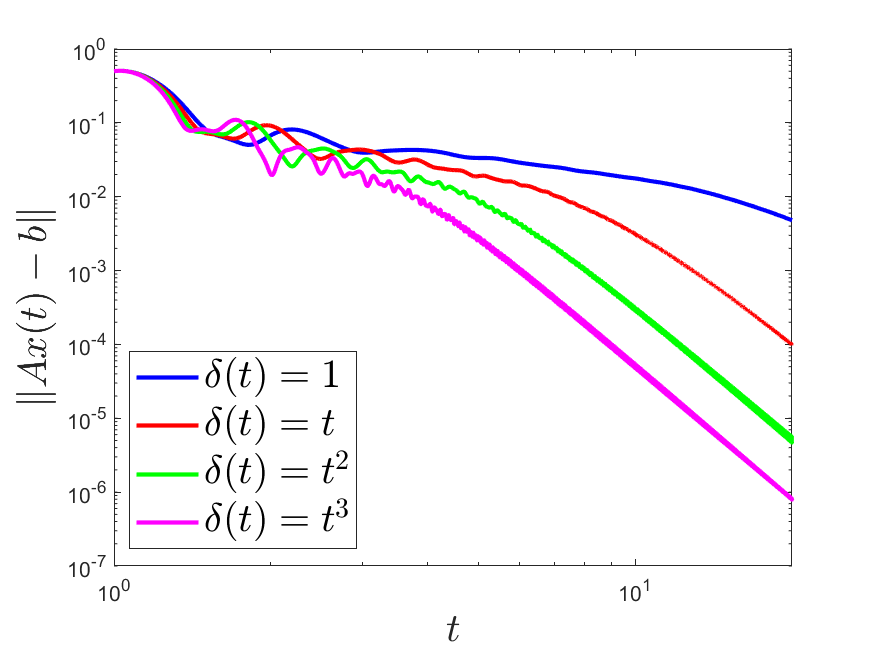}
	\endminipage\hfill
	\minipage{0.33\textwidth}
	\includegraphics[width=\linewidth]{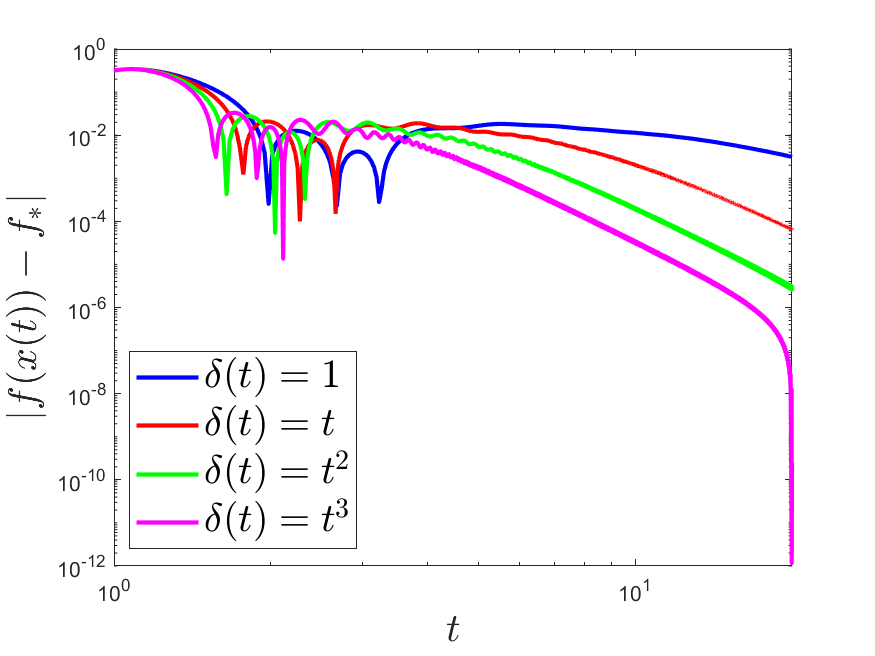}
	\endminipage\hfill
	\minipage{0.33\textwidth}
	\includegraphics[width=\linewidth]{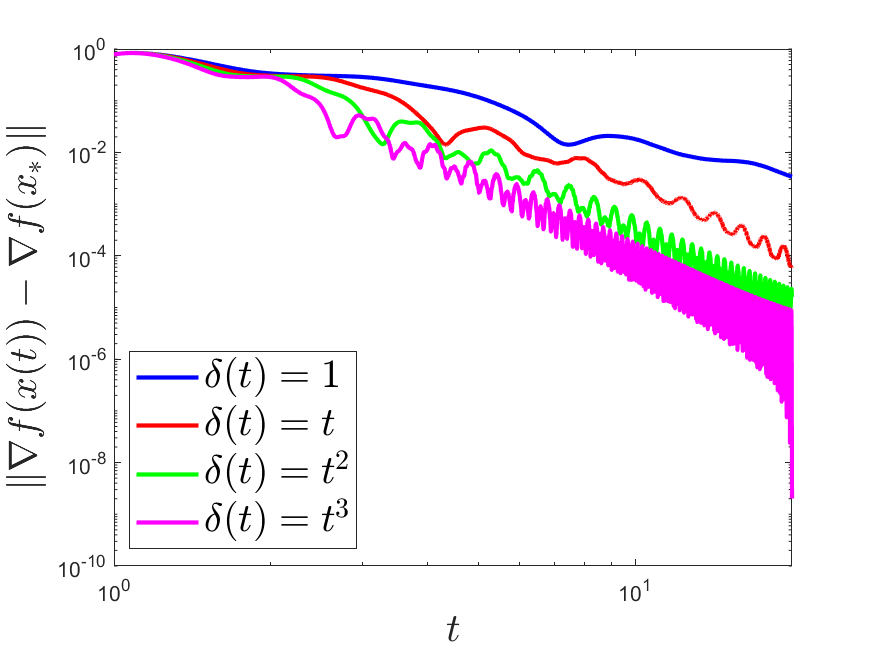}
	\endminipage\hfill
	\minipage{0.33\textwidth}
	\includegraphics[width=\linewidth]{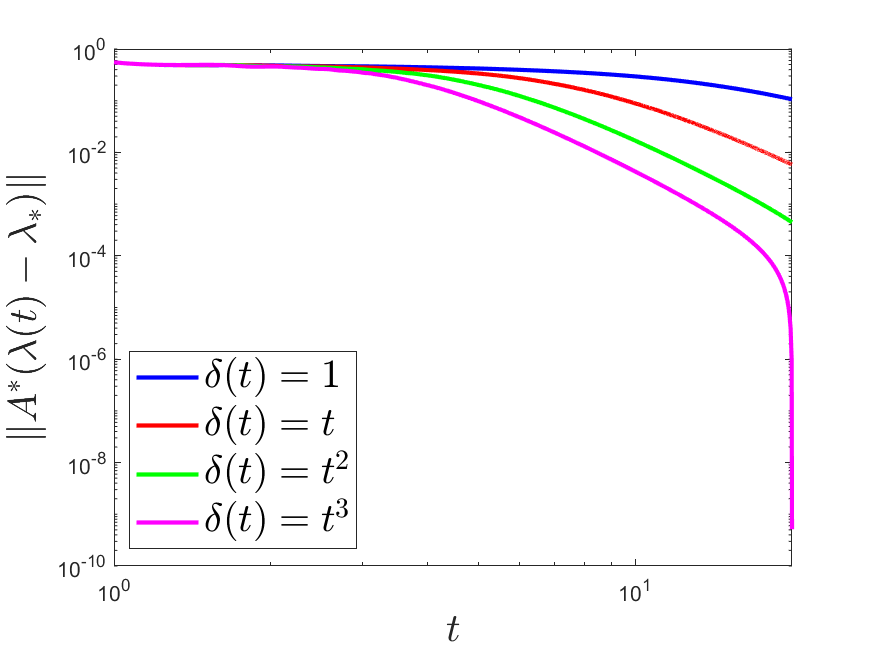}
	\endminipage\hfill
	\minipage{0.33\textwidth}
	\includegraphics[width=\linewidth]{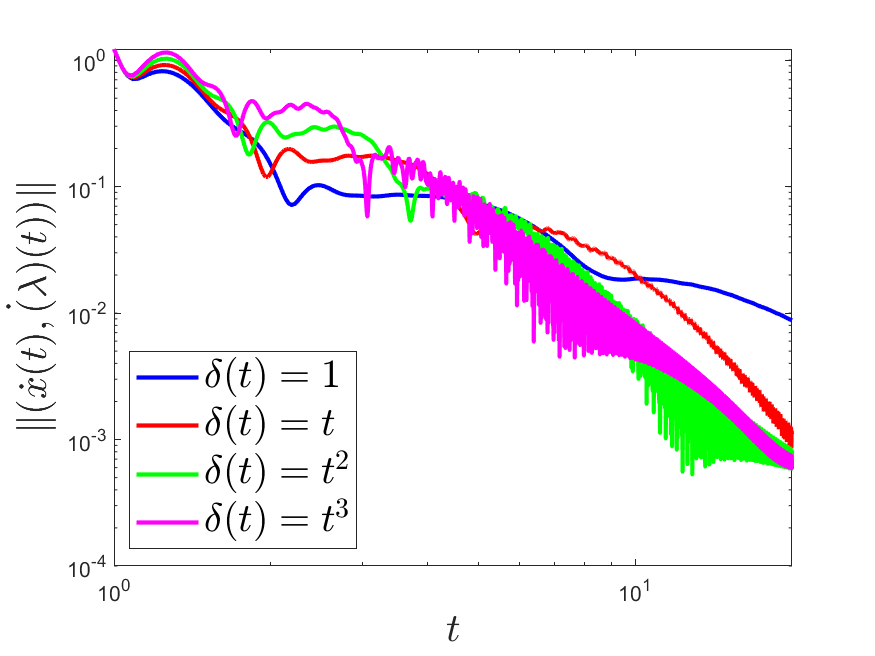}
	\endminipage
	\caption{The function $\delta \left( t \right)$ influences convergence behaviour in Example \ref{ex:notstrong}}\label{fig:notstrong}	
\end{figure}

\pagebreak

\appendix



\section{Appendix}

Here we collect the auxiliary results that are required to carry out many steps in out analysis. 

A proof for the following lemma in the finite-dimensional case can be found in \cite[Lemma 6]{He-Hu-Fang:automatica}. The proof for the infinite-dimensional case is short and virtually identical, so we include it here for the sake of completeness.. 

\begin{lem} \label{lem:lemma a1}
    Assume that $t_{0} > 0$, $g : \left[ t_{0}, +\infty \right) \to \mathcal{Y}$ is a continuous differentiable function, $a : \left[ t_{0}, +\infty \right) \to [0, +\infty)$ is a continuous function, and $C \geq 0$. If, in the sense of Bochner integrability, we have  
    \begin{equation}\label{eq:boundedness hypothesis}
        \left\| g(t) + \int_{t_{0}}^{t} a(s) g(s) ds\right\| \leq C \quad \forall t\geq t_{0}
    \end{equation}
    then 
    \[
        \sup_{t\geq t_{0}}\|g(t)\| \leq 2C < + \infty.
    \]
\end{lem}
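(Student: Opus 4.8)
The plan is to reduce the whole statement to a single bound on the integral term. First I would set $G(t) := \int_{t_0}^{t} a(s) g(s)\, ds$, so that $G$ is continuously differentiable with $\dot G(t) = a(t) g(t)$ and $G(t_0) = 0$, and the hypothesis \eqref{eq:boundedness hypothesis} reads simply $\|g(t) + G(t)\| \le C$ for all $t \ge t_0$. Since the triangle inequality gives $\|g(t)\| \le \|g(t) + G(t)\| + \|G(t)\| \le C + \|G(t)\|$, it suffices to prove the single bound $\|G(t)\| \le C$ for every $t \ge t_0$, after which the factor $2C$ is immediate.

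To control $\|G(t)\|$ I would use an integrating-factor argument. Setting $A(t) := \int_{t_0}^{t} a(s)\, ds$, which is well defined and nonnegative because $a \ge 0$ is continuous, a direct application of the product rule together with $\dot G = a g$ gives
\[
\frac{d}{dt}\Bigl(e^{A(t)} G(t)\Bigr) = a(t) e^{A(t)} \bigl(g(t) + G(t)\bigr).
\]
Integrating from $t_0$ to $t$ and using $G(t_0) = 0$ and $A(t_0) = 0$, then passing to norms and invoking $\|g(s) + G(s)\| \le C$, I obtain
\[
e^{A(t)} \|G(t)\| \le \int_{t_0}^{t} a(s) e^{A(s)} \|g(s) + G(s)\|\, ds \le C \int_{t_0}^{t} a(s) e^{A(s)}\, ds.
\]
The remaining integral is exact: since $\frac{d}{ds} e^{A(s)} = a(s) e^{A(s)}$, it equals $e^{A(t)} - 1$.

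Dividing through by $e^{A(t)} > 0$ then yields $\|G(t)\| \le C\bigl(1 - e^{-A(t)}\bigr) \le C$, which is precisely the bound I wanted; combined with the triangle inequality above, this closes the proof. I do not expect a serious obstacle here: the only points needing a little care are the product rule and the fundamental theorem of calculus in the Bochner sense, both of which are standard since $g$ is continuously differentiable and $a$ is continuous, so the integrand $s \mapsto a(s) e^{A(s)}\bigl(g(s) + G(s)\bigr)$ is continuous and Bochner integrable on compact subintervals. The essential idea — the one that keeps the estimate tight enough to land on $C$ rather than producing a Grönwall-type exponential factor — is the choice of the integrating factor $e^{A(t)}$, which converts the a priori bound on $g + G$ into a telescoping estimate for $G$ alone.
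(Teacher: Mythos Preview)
Your proof is correct and is essentially identical to the paper's: both use the integrating factor $e^{A(t)}$ to show $\bigl\|\int_{t_0}^t a(s)g(s)\,ds\bigr\| \le C$, then finish with the triangle inequality. The only cosmetic difference is that the paper builds the exponential factor directly into its auxiliary function $G$, whereas you multiply it in afterward; the computations line up exactly.
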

\begin{proof}
    Define, for every $t\geq t_{0}$, 
    \[
        G(t) := e^{\int_{t_{0}}^{t} a(s) ds} \int_{t_{0}}^{t} a(s) g(s) ds.
    \]
    Fix $t\geq t_{0}$. The time derivative of $G$ reads 
    \[
        \dot{G}(t) = a(t) e^{\int_{t_{0}}^{t} a(s) ds} \int_{t_{0}}^{t} a(s) g(s) ds + e^{\int_{t_{0}}^{t} a(s) ds} a(t) g(t) 
        = a(t) e^{\int_{t_{0}}^{t} a(s) ds} \left[ g(t) + \int_{t_{0}}^{t} a(s) g(s) ds\right], 
    \]
    so by using \eqref{eq:boundedness hypothesis} and the previous equality we arrive at 
    \begin{equation} \label{eq:derivative of G}
        \bigl\| \dot{G}(t)\bigr\| \leq C a(t) e^{\int_{t_{0}}^{t} a(s) ds} = C \frac{d}{dt}\left( e^{\int_{t_{0}}^{t} a(s) ds}\right).
    \end{equation}
    Since $G(t_{0}) = 0$, we have 
    \[
        G(t) = G(t) - G(t_{0}) = \int_{t_{0}}^{t} \dot{G}(s) ds,
    \]
    so by employing \eqref{eq:derivative of G} and the previous equality we obtain, for every $t\geq t_{0}$, 
    \begin{align*}
        e^{\int_{t_{0}}^{t} a(s) ds} \left\| \int_{t_{0}}^{t} a(s) g(s) ds\right\| &= \|G(t)\| \leq \int_{t_{0}}^{t}\bigl\| \dot{G}(s)\bigr\| ds \leq C \int_{t_{0}}^{t} \frac{d}{ds}\left(e^{\int_{t_{0}}^{s} a(\tau) d\tau}\right) ds \\
        &\leq C \left[ e^{\int_{t_{0}}^{t} a(s) ds} - 1\right] 
        \leq C e^{\int_{t_{0}}^{t} a(s) ds}. 
    \end{align*}
    Dividing both sides of the previous inequality by $e^{\int_{t_{0}}^{t} a(s) ds}$ gives us 
    \begin{equation}\label{eq:boundedness hypothesis 2}
        \left\| \int_{t_{0}}^{t} a(s) g(s) ds\right\| \leq C \quad \forall t\geq t_{0}.
    \end{equation}
    Now, by putting \eqref{eq:boundedness hypothesis} and \eqref{eq:boundedness hypothesis 2} we finally come to 
    \[
        \|g(t)\| \leq \left\| g(t) + \int_{t_{0}}^{t} a(s) g(s) ds\right\| + \left\| \int_{t_{0}}^{t} a(s) g(s) ds\right\| \leq 2C \quad \forall t\geq t_{0},
    \]
    which leads to the announced statement.
\end{proof}

The proofs for the following results can be found in \cite[Lemma A.1]{BotNguyen} and \cite[Lemma 5.2]{Abbas-Attouch-Svaiter}, respectively.
\begin{lem}\label{lem:lemma a2}
    Let $0 < t_{0} \leq r \leq +\infty$ and $h :\left[ t_{0}, +\infty \right) \to [0, +\infty)$ be a continuous function. For every $\alpha > 1$ it holds 
    \[
        \int_{t_{0}}^{r} \frac{1}{t^{\alpha}} \left[ \int_{t_{0}}^{t}s^{\alpha - 1} h(s)\right]dt \leq \frac{1}{\alpha - 1} \int_{t_{0}}^{r} h(t) dt.
    \]
    If $r = +\infty$, then equality holds. 
\end{lem}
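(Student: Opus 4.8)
The plan is to prove this by interchanging the order of integration via the Tonelli theorem, which applies without any integrability precondition because the integrand $s^{\alpha-1} h(s)/t^{\alpha}$ is nonnegative on the relevant region. The domain of the double integral is the triangle $\{(s,t) : t_{0} \leq s \leq t \leq r\}$, so swapping the order of integration turns the expression into
\[
    \int_{t_{0}}^{r} \frac{1}{t^{\alpha}}\left[\int_{t_{0}}^{t} s^{\alpha-1}h(s)\,ds\right] dt = \int_{t_{0}}^{r} s^{\alpha-1} h(s) \left[\int_{s}^{r} \frac{1}{t^{\alpha}}\,dt\right] ds.
\]
This reduction is the heart of the argument; everything that follows is a direct computation.

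Next I would evaluate the inner integral explicitly. For finite $r$ one has
\[
    \int_{s}^{r}\frac{1}{t^{\alpha}}\,dt = \frac{1}{\alpha-1}\left(\frac{1}{s^{\alpha-1}} - \frac{1}{r^{\alpha-1}}\right),
\]
which uses $\alpha > 1$ so that the antiderivative $-\frac{1}{(\alpha-1)t^{\alpha-1}}$ is well defined. Substituting this back and cancelling the factor $s^{\alpha-1}$ gives
\[
    \int_{t_{0}}^{r} s^{\alpha-1} h(s) \cdot \frac{1}{\alpha-1}\left(\frac{1}{s^{\alpha-1}} - \frac{1}{r^{\alpha-1}}\right) ds = \frac{1}{\alpha-1}\int_{t_{0}}^{r} h(s)\left(1 - \frac{s^{\alpha-1}}{r^{\alpha-1}}\right) ds.
\]
Since $h \geq 0$ and $0 \leq s^{\alpha-1}/r^{\alpha-1} \leq 1$ for $s \in [t_{0}, r]$, the parenthesized factor lies in $[0,1]$, so dropping the subtracted term only increases the integral, yielding the claimed bound $\frac{1}{\alpha-1}\int_{t_{0}}^{r} h(t)\,dt$.

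For the equality case $r = +\infty$, the boundary term $1/r^{\alpha-1}$ vanishes: here $\int_{s}^{+\infty} t^{-\alpha}\,dt = \frac{1}{(\alpha-1)s^{\alpha-1}}$, again because $\alpha > 1$ guarantees convergence at infinity. Substituting and cancelling $s^{\alpha-1}$ leaves exactly $\frac{1}{\alpha-1}\int_{t_{0}}^{+\infty} h(s)\,ds$, so the inequality becomes an identity. I do not anticipate any genuine obstacle: the only point requiring a word of care is the invocation of Tonelli to justify the swap, but this is immediate from nonnegativity, and the rest is an elementary integral evaluation together with the observation that the discarded term is nonnegative.
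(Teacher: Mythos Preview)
Your argument is correct. The paper does not actually supply a proof of this lemma; it simply cites \cite[Lemma A.1]{BotNguyen}, so there is no in-paper argument to compare against. Your Tonelli-based approach---swap the order of integration over the triangle $\{t_{0}\leq s\leq t\leq r\}$, evaluate $\int_{s}^{r}t^{-\alpha}\,dt$ explicitly, and observe that the residual factor $1-(s/r)^{\alpha-1}$ lies in $[0,1]$---is the standard way to establish this estimate and is almost certainly what the cited reference does as well.
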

%

\begin{lem} \label{lem:lemma a3}
    Let $t_{0} > 0$, $1 \leq p < +\infty$ and $1\leq q \leq +\infty$. Suppose that $F \in \sL^{p}\left[ t_{0}, +\infty \right)$ is a locally absolutely continuous nonnegative function, $G \in \sL^{q}\left[ t_{0}, +\infty \right)$ and 
    \[
        \dot{F}(t) \leq G(t) \qquad \forall t\geq t_{0}. 
    \]
    Then, $\lim_{t\to +\infty} F(t) = 0$. 
\end{lem}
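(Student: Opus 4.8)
The plan is to combine two sources of smallness: the nonnegativity together with $F \in \sL^{p}$ forces $F$ to be small on average over far-out windows, while the differential inequality $\dot{F} \le G$ with $G \in \sL^{q}$ limits how much $F$ can rise within such a window. First I would record that, since $F \ge 0$ and $\int_{t_{0}}^{+\infty} F^{p} < +\infty$ with $p < +\infty$, the tails of the $\sL^{p}$-integral vanish; hence by H\"older's inequality on the unit window $[t-1, t]$,
\[
\int_{t-1}^{t} F(s)\, ds \le \left( \int_{t-1}^{t} F(s)^{p}\, ds\right)^{1/p} \longrightarrow 0 \quad \text{as } t \to +\infty.
\]

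Next I would integrate the differential inequality. Writing $G^{+} := \max\{G, 0\} \ge G$, for any $t_{0} \le s \le t$ the local absolute continuity of $F$ gives $F(t) - F(s) = \int_{s}^{t} \dot{F}(\tau)\, d\tau \le \int_{s}^{t} G^{+}(\tau)\, d\tau$. Averaging this inequality over $s \in [t-1, t]$ and using Fubini's theorem to bound the resulting double integral (for fixed $\tau$ the admissible $s$-range has length at most $1$), I obtain for every $t \ge t_{0} + 1$
\[
F(t) \le \int_{t-1}^{t} F(s)\, ds + \int_{t-1}^{t} G^{+}(\tau)\, d\tau.
\]

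The first term tends to $0$ by the previous step. For the second term, when $q < +\infty$ another application of H\"older gives $\int_{t-1}^{t} G^{+} \le \bigl(\int_{t-1}^{t} (G^{+})^{q}\bigr)^{1/q}$, which vanishes as $t \to +\infty$ because the tails of the $\sL^{q}$-integral do; hence $F(t) \to 0$. The borderline case $q = +\infty$ is not reached by this estimate, and this is where I expect the only genuine difficulty: there $\dot{F} \le \lVert G \rVert_{\infty} =: M$ only, so $F$ may rise at a fixed rate and the window term need not be small. I would dispatch it by a direct oscillation argument: if $F \not\to 0$, choose $\varepsilon > 0$ and widely separated $t_{n} \to +\infty$ with $F(t_{n}) \ge \varepsilon$; since $F(t_{n} - h) \ge F(t_{n}) - Mh$, the function stays $\ge \varepsilon/2$ on an interval of the fixed length $\varepsilon/(2M)$ around each $t_{n}$, forcing $\int F^{p} = +\infty$ and contradicting $F \in \sL^{p}$. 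The crux of the whole argument is the averaging step, which lets the two integrability hypotheses act on the two terms separately; everything else is routine H\"older and tail estimates.
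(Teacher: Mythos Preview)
Your argument is correct. The averaging identity
\[
F(t) \le \int_{t-1}^{t} F(s)\,ds + \int_{t-1}^{t} G^{+}(\tau)\,d\tau,
\]
obtained by integrating $F(t)\le F(s)+\int_{s}^{t}G^{+}$ over $s\in[t-1,t]$ and applying Fubini, is exactly the right device: it separates the two integrability hypotheses so that each kills one term via H\"older and a tail estimate. Your treatment of the endpoint $q=+\infty$ by the fixed-length plateau contradiction is also sound; just note that the degenerate subcase $\lVert G\rVert_{\infty}=0$ (where the division by $M$ is illegal) is harmless, since then $F$ is nonincreasing and nonnegative with $F\in\sL^{p}$, hence $F\to 0$ directly.

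As for comparison with the paper: the paper does not prove this lemma at all but simply cites \cite[Lemma~5.2]{Abbas-Attouch-Svaiter}. The argument there follows the same pattern you used---integrate the differential inequality over a short window and feed in the $\sL^{p}$ and $\sL^{q}$ tail conditions---so your proof is in line with the reference the authors defer to.
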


The following lemma is a slight variation of results already present in the literature. See, for example, \cite[Lemma A.2]{Attouch-Chbani-Riahi}. 

\begin{lem} \label{lem:lemma a4}
    Let $t_{0} > 0$, $\alpha > 1$, and let $\phi : [t_{0}, +\infty) \to \mathbb{R}$ be a twice continuously differentiable function bounded from below. Furthermore, assume $w : [t_{0}, +\infty) \to [0, +\infty)$ to be a continuously differentiable function such that $t\mapsto t w(t)$ belongs to $\mathbb{L}^{1}[t_{0}, +\infty)$ and
    \[
        t\ddot{\phi}(t) + \alpha \dot{\phi}(t) + t^{2}\dot{w}(t) \leq 0 \qquad \forall t\geq t_{0}.
    \]
    Then, the positive part $[\dot{\phi}]_{+}$ of $\dot{\phi}$ belongs to $\mathbb{L}^{1}[t_{0}, +\infty)$ and the limit $\lim_{t\to +\infty} \phi(t)$ is a real number. 
\end{lem}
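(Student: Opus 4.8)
The plan is to reduce the second-order inequality to a first-order one in the single quantity $t^{\alpha}\dot\phi(t)$, integrate twice, and invoke the hypothesis $tw(t)\in\mathbb{L}^{1}[t_0,+\infty)$ only at the very end through Lemma~\ref{lem:lemma a2}. First I would multiply the standing inequality $t\ddot\phi(t)+\alpha\dot\phi(t)+t^{2}\dot w(t)\le 0$ by $t^{\alpha-1}$ and observe that the first two terms assemble into an exact derivative, yielding
\[
\frac{d}{dt}\bigl(t^{\alpha}\dot\phi(t)\bigr)\le -t^{\alpha+1}\dot w(t)\qquad\forall t\ge t_{0}.
\]
Integrating from $t_{0}$ to $t$ and integrating the right-hand side by parts turns $\int_{t_{0}}^{t}s^{\alpha+1}\dot w(s)\,ds$ into $t^{\alpha+1}w(t)-t_{0}^{\alpha+1}w(t_{0})-(\alpha+1)\int_{t_{0}}^{t}s^{\alpha}w(s)\,ds$. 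Since $w\ge 0$, the boundary term $-t^{\alpha+1}w(t)$ is nonpositive and may be discarded, leaving
\[
t^{\alpha}\dot\phi(t)\le C_{0}+(\alpha+1)\int_{t_{0}}^{t}s^{\alpha}w(s)\,ds\qquad\forall t\ge t_{0},
\]
where $C_{0}:=t_{0}^{\alpha}\dot\phi(t_{0})+t_{0}^{\alpha+1}w(t_{0})$ depends only on the initial data.

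To obtain $[\dot\phi]_{+}\in\mathbb{L}^{1}[t_{0},+\infty)$ I would take positive parts and divide by $t^{\alpha}>0$, so that
\[
\int_{t_{0}}^{+\infty}[\dot\phi(t)]_{+}\,dt\le [C_{0}]_{+}\int_{t_{0}}^{+\infty}\frac{dt}{t^{\alpha}}+(\alpha+1)\int_{t_{0}}^{+\infty}\frac{1}{t^{\alpha}}\Bigl(\int_{t_{0}}^{t}s^{\alpha}w(s)\,ds\Bigr)dt.
\]
The first integral is finite because $\alpha>1$. For the second, I would write $s^{\alpha}w(s)=s^{\alpha-1}\bigl(sw(s)\bigr)$ and apply Lemma~\ref{lem:lemma a2} with $h(s):=sw(s)$, which bounds the double integral by $\tfrac{1}{\alpha-1}\int_{t_{0}}^{+\infty}sw(s)\,ds$; this is finite precisely because $tw(t)\in\mathbb{L}^{1}[t_{0},+\infty)$ by hypothesis. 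This establishes the first conclusion.

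For the existence of $\lim_{t\to+\infty}\phi(t)$ I would decompose $\dot\phi=[\dot\phi]_{+}-[\dot\phi]_{-}$ and set $\psi(t):=\phi(t)-\int_{t_{0}}^{t}[\dot\phi]_{+}(s)\,ds$. Then $\dot\psi=-[\dot\phi]_{-}\le 0$, so $\psi$ is nonincreasing; moreover $\psi$ is bounded from below, since $\phi$ is bounded from below and $\int_{t_{0}}^{t}[\dot\phi]_{+}\le\bigl\|[\dot\phi]_{+}\bigr\|_{\mathbb{L}^{1}}<+\infty$. A nonincreasing function bounded from below converges, so $\lim_{t\to+\infty}\psi(t)$ exists in $\mathbb{R}$, and adding back the convergent integral $\int_{t_{0}}^{+\infty}[\dot\phi]_{+}(s)\,ds$ shows that $\lim_{t\to+\infty}\phi(t)$ exists and is finite. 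The one delicate point is the integration by parts: one must match the power of $s$ so that the iterated-integral estimate of Lemma~\ref{lem:lemma a2} applies with $h(s)=sw(s)$, and verify that the discarded boundary term carries the favorable sign so that the nonnegativity of $w$ genuinely permits dropping it.
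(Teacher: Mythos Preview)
Your proof is correct and follows essentially the same approach as the paper's: multiply by $t^{\alpha-1}$ to recognize $\frac{d}{dt}(t^{\alpha}\dot\phi)$, handle the $w$-term so as to drop the nonnegative boundary contribution $t^{\alpha+1}w(t)$, then divide by $t^{\alpha}$, take positive parts, and invoke Lemma~\ref{lem:lemma a2} with $h(s)=sw(s)$ before concluding via the monotone auxiliary $\psi(t)=\phi(t)-\int_{t_{0}}^{t}[\dot\phi]_{+}$. The only cosmetic difference is that the paper first adds $(\alpha+1)tw(t)$ to both sides to assemble $\frac{d}{dt}(t^{\alpha+1}w(t))$ directly, whereas you move $t^{\alpha+1}\dot w$ to the right and integrate by parts; these are the same manipulation.
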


\begin{proof}
    Fix $t\geq t_{0}$. Adding $(\alpha + 1)t w(t)$ to both sides of the previous inequality and then multiplying it by $t^{\alpha - 1}$ yields 
    \[
        \frac{d}{dt}\bigl(t^{\alpha} \dot{\phi}(t)\bigr) + \frac{d}{dt}\bigl(t^{\alpha + 1}w(t)\bigr) \leq (\alpha + 1)t^{\alpha}w(t).
    \]
    Since the previous inequality holds for any $t\geq t_{0}$, we can integrate it from $t_{0}$ to $t\geq t_{0}$ to get 
    \[
        t^{\alpha} \dot{\phi}(t) - t_{0}^{\alpha} \dot{\phi}(t_{0}) + t^{\alpha + 1}w(t) - t_{0}^{\alpha + 1}w(t_{0}) \leq (\alpha + 1) \int_{t_{0}}^{t} s^{\alpha}w(s) ds.
    \]
    After dropping the nonnegative term $t^{\alpha + 1}w(t)$ and dividing by $t^{\alpha}$ we arrive at 
    \[
        \dot{\phi}(t) \leq \frac{\widetilde{C}}{t^{\alpha}} + \frac{\alpha + 1}{t^{\alpha}} \int_{t_{0}}^{t}s^{\alpha} w(s) ds \qquad \forall t\geq t_{0}, 
    \]
    where 
    \[
        \widetilde{C} := t_{0}^{\alpha} \bigl| \dot{\phi}(t_{0})\bigr| + t_{0}^{\alpha + 1}w(t_{0}),
    \]
    which further leads to 
    \[
        [\dot{\phi}(t)]_{+} \leq \frac{\widetilde{C}}{t^{\alpha}} + \frac{\alpha + 1}{t^{\alpha}} \int_{t_{0}}^{t}s^{\alpha}w(s) ds \qquad \forall t\geq t_{0}.
    \]
    Now, we integrate this inequality from $t_{0}$ to $r\geq t_{0}$ and we apply Lemma \ref{lem:lemma a2} with $h : [t_{0}, +\infty) \to [0, +\infty)$ given by $h(s) := sw(s)$ to obtain 
    \begin{align*}
        \int_{t_{0}}^{r}[\dot{\phi}(t)]_{+}dt &\leq \widetilde{C}\int_{t_{0}}^{r} \frac{1}{t^{\alpha}}dt + (\alpha + 1) \int_{t_{0}}^{r}\frac{1}{t^{\alpha}} \left[ \int_{t_{0}}^{t} s^{\alpha - 1}\cdot sw(s)ds\right]dt \\
        &\leq \frac{\widetilde{C}}{1 - \alpha}\left( \frac{1}{t_{0}^{\alpha - 1}} - \frac{1}{r^{\alpha - 1}}\right) + \frac{\alpha + 1}{1 - \alpha} \int_{t_{0}}^{r}tw(t) dt.
    \end{align*}
    By hypothesis, as $r\to +\infty$, the right hand side of the previous inequality is finite. In other words, 
    \[
        \int_{t_{0}}^{+\infty}[\dot{\phi}(t)]_{+}dt < +\infty.
    \]
    The previous statement, together with the fact that we assumed that $\phi$ was bounded from below, allow us to deduce that the function $\psi : [t_{0}, +\infty) \to \mathbb{R}$ given by 
    \[
        \psi(t) := \phi(t) - \int_{t_{0}}^{t}[\dot{\phi}(s)]_{+}ds
    \]
    is also bounded from below. An easy computation shows that $\dot{\psi}$ is nonpositive on $[t_{0}, +\infty)$, thus $\psi$ is nonincreasing on $[t_{0}, +\infty)$. These facts imply that $\lim_{t\to +\infty}\psi(t)$ is a real number. Finally, we conclude that 
    \[
        \lim_{t\to +\infty} \phi(t) = \lim_{t\to +\infty}\psi(t) + \int_{t_{0}}^{+\infty}[\dot{\phi}(s)]_{+}ds \: \in\: \mathbb{R}.
    \]
\end{proof}

The proof for Opial's Lemma can be found in \cite{Opial}.

\begin{lem}[Opial's Lemma] \label{lem:lemma a5}
    Let $\mathcal{H}$ be a real Hilbert space, $S \subseteq \mathcal{H}$ a nonempty set, $t_{0} > 0$ and $z : \left[ t_{0}, +\infty \right) \to \mathcal{H}$ a mapping that satisfies
    \begin{enumerate}[label = (\roman*)]
        \item for every $z_{*} \in S$, $\lim_{t\to +\infty} \|z(t) - z_{*}\|$ exists;
        \item every weak sequential cluster point of the trajectory $z(t)$ as $t \to +\infty$ belongs to $S$. 
    \end{enumerate}
    Then, $z(t)$ converges weakly to an element of $S$ as $t \to +\infty$.
\end{lem}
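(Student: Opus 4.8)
The plan is to invoke the three classical ingredients behind Opial's Lemma: boundedness of the trajectory, weak sequential compactness of the Hilbert space $\mathcal{H}$, and an identity that forces uniqueness of the weak cluster point. Since $S$ is nonempty, I would first fix some $z_{*} \in S$. Condition (i) guarantees that $\lim_{t\to+\infty}\|z(t) - z_{*}\|$ exists and is finite, so $z(t)$ is bounded in a neighborhood of $+\infty$; thus the trajectory is bounded on all of $[t_{0}, +\infty)$. Boundedness in $\mathcal{H}$ ensures, via the Eberlein--\v{S}mulian theorem, that along any sequence $t_{n} \to +\infty$ the family $\bigl(z(t_{n})\bigr)$ admits a weakly convergent subsequence; hence weak sequential cluster points exist, and by condition (ii) every such cluster point lies in $S$.

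The crux is to show that this cluster point is unique. Suppose $z_{1}, z_{2} \in \mathcal{H}$ are two weak sequential cluster points; by (ii) both belong to $S$. I would consider the function
\[ h(t) := \|z(t) - z_{1}\|^{2} - \|z(t) - z_{2}\|^{2} = 2\langle z(t), z_{2} - z_{1}\rangle + \|z_{1}\|^{2} - \|z_{2}\|^{2}, \qquad t \geq t_{0}. \]
By condition (i) applied to both $z_{1}$ and $z_{2}$ in $S$, each of $\|z(t) - z_{1}\|$ and $\|z(t) - z_{2}\|$ converges as $t\to+\infty$, so $h(t)$ converges and therefore $\langle z(t), z_{2} - z_{1}\rangle$ converges to some real limit $\ell$. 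Evaluating this limit along a subsequence realizing $z(t_{n})\rightharpoonup z_{1}$ gives $\ell = \langle z_{1}, z_{2} - z_{1}\rangle$, while evaluating it along a subsequence realizing $z(s_{n})\rightharpoonup z_{2}$ gives $\ell = \langle z_{2}, z_{2} - z_{1}\rangle$. Subtracting yields $\|z_{2} - z_{1}\|^{2} = 0$, that is, $z_{1} = z_{2}$.

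Finally, I would upgrade ``unique weak sequential cluster point'' to genuine weak convergence of the continuous-parameter trajectory by a contradiction argument: if $z(t)$ did not converge weakly to the unique cluster point $\bar{z} \in S$, there would exist $u \in \mathcal{H}$, $\varepsilon > 0$ and a sequence $t_{n} \to +\infty$ with $|\langle z(t_{n}) - \bar{z}, u\rangle| \geq \varepsilon$; by boundedness I could extract a weakly convergent subsequence whose limit lies in $S$ and, by uniqueness, must equal $\bar{z}$, contradicting the separation. Hence $z(t)\rightharpoonup \bar{z} \in S$.

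The main obstacle is concentrated in the uniqueness step together with this last passage from sequential cluster points to convergence of the continuous trajectory: the former rests on the polarization-type identity for $h$ and the simultaneous use of condition (i) for two distinct anchor points in $S$, while the latter rests on the weak sequential compactness afforded by boundedness. Everything else is routine bookkeeping.
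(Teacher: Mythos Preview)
Your argument is correct and follows the classical route to Opial's Lemma: boundedness from condition~(i), existence of weak sequential cluster points by reflexivity, membership in $S$ from condition~(ii), uniqueness via the polarization identity applied to two cluster points, and the final subsequence contradiction to pass from uniqueness of the cluster point to genuine weak convergence. One cosmetic remark: the passage ``bounded in a neighborhood of $+\infty$; thus the trajectory is bounded on all of $[t_{0}, +\infty)$'' tacitly assumes some regularity of $z$ on compact intervals, but for the conclusion only boundedness for large $t$ matters, and that is exactly what condition~(i) provides.

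As for the comparison with the paper: the paper does not supply its own proof of this lemma. It simply states the result and refers the reader to Opial's original article \cite{Opial}. Your write-up therefore goes beyond what the paper does here, and the approach you give is precisely the standard one found in the literature.
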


%
%
%
%

\small

\end{document}